\newtheorem{theorem}{Theorem}[section]
\newtheorem{corollary}[theorem]{Corollary}
\newtheorem{definition}[theorem]{Definition}
\newtheorem{lemma}[theorem]{Lemma}
\newtheorem{nonGlobalClaim}{Claim}[theorem]  %for claims that are part of a larger theorem, but which don't hold absolutely
\newtheorem{conjecture}[theorem]{Conjecture}
\newtheorem{question}[theorem]{Question}
\newtheorem{fact}[theorem]{Fact}
\begin{document}
\title[]{Adjoining only the things you want:  a survey of Strong Chang's Conjecture and related topics}

\author{Sean Cox}
\email{scox9@vcu.edu}
\address{
Department of Mathematics and Applied Mathematics \\
Virginia Commonwealth University \\
1015 Floyd Avenue \\
Richmond, Virginia 23284, USA 
}

\thanks{The author gratefully acknowledges support from Simons Foundation grant 318467.}

\subjclass[2010]{03E05, 03E55,  03E35, 03E65
}

\begin{abstract}
We survey some old and new results on strong variants of Chang's Conjecture and related topics.  
\end{abstract}

\maketitle

\tableofcontents

\section{Introduction}

%\textcolor{red}{\textbf{GO THROUGH AND DELETE ANY FLOWERY LANGUAGE!}}

Variations of the following problem appear frequently in set theory, especially since Shelah's introduction of proper and semiproper forcing.  Given an uncountable set $A$ such that $\omega_1 \subset A$, some Skolemized structure $\mathfrak{A} = (A,\dots)$ in a countable language, some countable $M \subset A$,\footnote{We do not require here that $M \prec \mathfrak{A}$, because some examples of this problem appear when $M \in V$ and $\mathfrak{A}$ is in some forcing extension of $V$.} and some object $a \in A \setminus M$ of interest, we are often interested in adjoining $a$ to $M$, but in a way that doesn't add any new ``unintended" objects.  For example, we often want to know whether we can arrange that the $\mathfrak{A}$-Skolem hull of $M \cup \{ a \}$---which we'll denote $M^{\mathfrak{A}}(a)$---has the property that
\begin{equation}\label{eq_MainProblem}
M^{\mathfrak{A}}(a) \cap \omega_1 = M \cap \omega_1. \tag{+}
\end{equation}
We often informally express \eqref{eq_MainProblem} by saying that ``adjoining $a$ to $M$ doesn't add new countable ordinals".

We will omit the superscript $\mathfrak{A}$ from $M^{\mathfrak{A}}(a)$ when it is clear from the context.  Given a countable $M$, there are always objects $a$ for which the equality \eqref{eq_MainProblem} must fail.  For example, if $a = M \cap \omega_1$, then clearly \eqref{eq_MainProblem} fails.  A slightly less obvious example is when $M \prec \mathfrak{A}$, where $\mathfrak{A}$ is any Skolemized extension of $(H_{\omega_2},\in)$, and $a$ is some ordinal in $\text{sup}(M \cap \omega_2) \setminus M$.  To see that \eqref{eq_MainProblem} must fail in this situation, let $\beta$ be any ordinal in $M \cap \omega_2$ above $a$.  Since $M \prec \mathfrak{A}$ and $\mathfrak{A}$ extends $(H_{\omega_2},\in)$, there is some $f \in M$ that is a surjection from $\omega_1 \to \beta$.  Now $f,a \in M(a)\prec \mathfrak{A}$, so there is some $\xi \in M(a) \cap \omega_1$ such that $f(\xi) = a$.  Then $\xi \notin M$, because otherwise, since $f \in M$, $f(\xi) = a$ would be in $M$ too, contrary to our assumptions about $a$.

So we cannot hope to have \eqref{eq_MainProblem} hold for every choice of $M$ and $a$.  There are several dials to turn to adjust the question, e.g.\ for an arbitrary $M$, for which $a \in A \setminus M$ does \eqref{eq_MainProblem} hold?  Or, for a fixed $a \in A$, and given some (necessarily nonstationary) collection of countable $M \subset A$ such that $a \notin M$, for which such $M$ does equation \eqref{eq_MainProblem} hold?

%Even with the additional constraints on $x$, ZFC proves that there are \emph{projective stationarily many} such $M$; this will follow from Lemma \ref{} below.  However, to get ``club-many" such $M$ requires large cardinals; see Section \ref{}.

Such questions come up surprisingly often in set theory.  Here are a few more concrete variants of the question, to give a flavor of how widespread the problem is.

\subsection{Semiproper and Proper forcing (Shelah)}

Suppose that $\mathbb{P}$ is a partial order in $H_\theta$ and $M$ is a countable elementary substructure of $(H_\theta, \in, \mathbb{P})$.  Let $p$ be a condition in $M$.  Can we find a $V$-generic filter $G$, with $p \in G$, such that, letting $\Delta \in V[G]$ be any wellorder of $H^V_\theta[G]$ (so that the resulting structure will have definable Skolem functions), the equation
 \[
 M(G) \cap \omega_1 = M \cap \omega_1 
 \]
 holds?  Here $M(G)$ denotes the hull of $M \cup \{ G \}$ in the structure $(H^V_\theta[G],\in,\Delta)$.  If the answer is ``yes" for every countable $M$ and every $p \in M$, then $\mathbb{P}$ is called \emph{semiproper}.

What if, instead, we make the stronger requirement that 
 \[
 M^{\mathfrak{A}}(G) \cap V = M ?
 \]
If the answer is ``yes" for every countable $M$ and every $p \in M$, then $\mathbb{P}$ is called \emph{proper}.

\subsection{Strong versions of Chang's Conjecture}\label{sec_Intro_SCC}

 For how many $M \in [H_{\omega_3}]^\omega$ is there some $a \in \omega_2 \setminus M$ such that $M(a) \cap \omega_1 = M \cap \omega_1$?  There are always \emph{projective stationarily} many such $M$ (see Section \ref{sec_ZFC_ProjStat}).  Getting club-many such $M$ requires (consistency of) large cardinals, and is a kind of \emph{Strong Chang's Conjecture} discussed in Section \ref{sec_SCC_WRP}.  These strong forms of Chang's Conjecture have interesting characterizations (e.g.\ Theorem \ref{thm_Shelah_SCCcof} and \ref{thm_CoxSakai}), and tend to amplify saturation properties of the nonstationary ideal on $\omega_1$ (Section \ref{sec_ProjCC_NS}).   Moreover, higher variants of this notion (for example, for $M$ of size $\omega_1$) were used by Foreman and Magidor to prove that certain kinds of stationary reflection are inconsistent with ZFC; see Section \ref{sec_Higher}.

Here is a related question that is closely related to stationary set reflection.   Suppose $\theta \ge \omega_2$, $\mathfrak{A}$ is some skolemized extension of $(H_\theta,\in)$, $M$ is countable, and $M$ happens to be of the form $M' \cap W$ for some countable $M' \prec \mathfrak{A}$ and some $W \in M'$ such that $|W|=\omega_1 \subset W \prec \mathfrak{A}$.  Then $M \prec \mathfrak{A}$ (because both $M'$ and $W$ were elementary in $\mathfrak{A}$, and $\mathfrak{A}$ is Skolemized).  Not only do we  have $M(W) \cap \omega_1 = M \cap \omega_1$ in this situation, but in fact
\begin{equation}\label{eq_MWcapW}
M(W) \cap W = M. \tag{**}
\end{equation}
To see the nontrivial direction of \eqref{eq_MWcapW}---i.e.\ the $\subseteq$ direction---let $z \in M(W) \cap W$.  Then $z \in W$ and $z = h(\vec{q})$ for some $\mathfrak{A}$-Skolem function $h$ and some finite tuple $\vec{q}$ from $M \cup \{W \}$.  Since $M \subset M'$ and $W \in M'$, $\vec{q} \in M'$.  Since $M' \prec \mathfrak{A}$, $z=h(\vec{q}) \in M'$.  So $z \in W \cap M' = M$. 

It's natural to ask for how many $M$ does such a $W$ exist:
\begin{question}
For how many $M \in [H_\theta]^\omega$ is it true that there exists a $W \prec \mathfrak{A}$ such that $|W|=\omega_1 \subset W$ and $M(W) \cap W = M$?
\end{question}

It turns out that there are always a large number (``projective stationarily many") of such $M$, and in fact a large number of $M$ for which \emph{stationarily} many $W$'s work.   We will return to this in Section \ref{sec_ZFC_ProjStat}.

\subsection{Antichain catching}\label{sec_IntroAntichainCatching}

Suppose $M$ is a countable elementary substructure of $\mathfrak{A}:=(H_\theta,\in,\Delta)$ and $\mathcal{A}$ is a maximal antichain in the boolean algebra $\wp(\omega_1)/\text{NS}_{\omega_1}$ (where $\text{NS}_{\omega_1}$ denotes the ideal of nonstationary subsets of $\omega_1$). We say that $\boldsymbol{M}$ \textbf{catches} $\boldsymbol{\mathcal{A}}$ if there is some $S \in M \cap \mathcal{A}$ such that $M \cap \omega_1 \in S$.\footnote{More precisely, we should say that there is an $S \in M$ such that $[S] \in \mathcal{A}$ and $M \cap \omega_1 \in S$, where $[S]$ denotes the equivalence class of $S$ in $\wp(\omega_1)/\text{NS}_{\omega_1}$.  We will often omit the equivalence class notation.}

\begin{question}\label{q_HowManyM_saturation}
How many $M$ catch $\mathcal{A}$?
\end{question}
There are always \emph{projective stationarily} many such $M$.  If there are club-many such $M$ (for each $\mathcal{A}$), then $\text{NS}_{\omega_1}$ is \textbf{saturated} (i.e.\ $\wp(\omega_1)/\text{NS}_{\omega_1}$ has the $\omega_2$ chain condition).  The converse holds as well.  This is proved in a highly general form (applicable to ideals other than $\text{NS}_{\omega_1}$) in Lemma 3.46 of Foreman~\cite{MattHandbook}; we sketch a proof just for $\text{NS}_{\omega_1}$ in Section \ref{sec_Prelims}.

Here is a question about antichain-catching that is more closely related to Chang's Conjecture and stationary set reflection is (again, for an arbitrary maximal antichain $\mathcal{A}$ in $\wp(\omega_1)/\text{NS}_{\omega_1}$):
\begin{question}
For how many $M$ does the following hold? 
\begin{equation}\label{eq_ExtendCatch}
\text{There exists some } S \in \mathcal{A} \text{ such that }  M^{\mathfrak{A}}(S) \cap \omega_1 = M \cap \omega_1?
\end{equation}
Equivalently:  how many $M$ can be ``end-extended" (i.e.\ without adding new countable ordinals) to some model that catches $\mathcal{A}$?
\end{question}
 
The stationary reflection principle WRP implies that the answer is ``club-many" (for each $\mathcal{A}$), which in turn implies that $\text{NS}_{\omega_1}$ is \emph{presaturated}.  See Section \ref{sec_Presat}.

%We'll see in Section \ref{} that there are always ``projective-stationarily" many such $M \in [H_\theta]^\omega$; in fact, for projective-stationarily many such $M$, there is already such an $S \in M$ (so we don't have to adjoin anything at all).  But to get ``club-many" $M$ that have property \eqref{eq_ExtendCatch} (for every choice of $\mathcal{A}$) requires large cardinals.   This is related to \emph{antichain-catching} arguments introduced in Foreman-Magidor-Shelah~\cite{}, where it is shown (essentially) that having club-many $M$ that satisfy \eqref{eq_ExtendCatch} (for any $\mathcal{A}$ implies $\text{NS}_{\omega_1}$ is \emph{presaturated}; see Section \ref{}.  To require that (for every $\mathcal{A}$) there are club-many $M$ such that there is \emph{already} some $S \in M$ such that $S \in \mathcal{A}$ and $M \cap \omega_1 \in S$ is equivalent to the assertion that $\text{NS}_{\omega_1}$ is saturated (Lemma 3.46 of Foreman~\cite{MattHandbook}).

\subsection{The scope and purpose of this survey}

This is intended to be a survey of several topics that are closely related to the ``extension" problems described above.  Proofs are generally included if they are sufficiently short, demonstrate some of the common ideas, or simplify/shorten existing proofs in the literature.  There is considerable overlap between this survey and the \emph{Handbook of Set Theory}, especially Foreman's chapter (\cite{MattHandbook}), where these topics are usually treated in much more generality.  The current survey is intended to be more concise, and with a more restricted scope, than those sources.  The survey also includes some newer results (mainly in Section \ref{sec_SCC_WRP}) that have appeared since the \emph{Handbook of Set Theory} was published.  The survey also attempts to uniformize the treatment of some related topics, e.g.\ the ``Global" versions of Strong Chang's Conjecture introduced by Doebler-Schindler~\cite{MR2576698} and Fuchino-Usuba~\cite{FuchinoUsuba} (these are covered in Section \ref{sec_SCC_WRP}).

Section \ref{sec_Prelims} includes preliminaries.  Section \ref{sec_ZFC_ProjStat} covers some basic results in ZFC, generally of the form ``such-and-such a set is always projective stationary".  Section \ref{sec_SCC_WRP}, the longest section of the survey, deals with strong versions of Chang's Conjecture, stationary reflection principles, and related topics.  One can roughly view these as what you get when you replace ``projective stationary" with ``club" in the lemmas from Section \ref{sec_ZFC_ProjStat}.  Section \ref{sec_Higher} covers some results of Foreman-Magidor~\cite{MR1359154} about impossibility of higher stationary set reflection (with attempts to streamline the proof and highlight its connection with Strong Chang's Conjecture). 

\section{Preliminaries}\label{sec_Prelims}

Throughout this paper, we use the word \emph{stationary} in the ``weak" sense of Foreman-Magidor-Shelah~\cite{MR924672} and Larson~\cite{MR2069032}, though in many contexts this ``weak" concept of stationarity is equivalent to Jech's notion of stationarity (see Feng~\cite{MR946635} for a comparison of the two notions).  Namely, a set $S$ is stationary iff for every $F:[\bigcup S]^{<\omega} \to \bigcup S$, there exists an $x \in S$ that is closed under $F$.  This is equivalent to requiring that for every structure $\mathfrak{A}$ in a countable language with universe $\bigcup S$, there exists some $x \in S$ such that $x \prec \mathfrak{A}$.  We will often refer to some ambient space when discussing stationarity, and say things like ``$S$ is stationary \textbf{in} $\boldsymbol{[Z]^{\omega}}$"; by this we mean that $S \subseteq [Z]^{\omega}$, and whenever $\mathfrak{A}$ is a structure on $Z$ in a countable language, then there is a $W \in S$ such that $W \prec \mathfrak{A}$.  Other ambient spaces will sometimes be used as well (e.g.\ $[Z]^{\omega_1}$).  This implies in particular that $\bigcup S = Z$, and hence agrees with the assertion that ``$S$ is stationary" defined above.  We make frequent use of the $\sigma$-completeness of the nonstationary ideal; i.e.\ that a countable union of nonstationary sets is nonstationary.  We also make frequent use of Fodor's Lemma, which asserts that if $f$ is a regressive function on a stationary set $S$---i.e.\ $f(x) \in x$ for every $x \in S$---then there is a stationary $S' \subseteq S$ and a fixed $y$ such that $f(x) = y$ for every $x \in S'$.  The same holds if we replace ``stationary" by ``stationary in such-and-such ambient structure".  Proofs of these and other standard facts about this notion of stationarity appear in Larson~\cite{MR2069032}.  The following lemmas are used frequently:
\begin{lemma}\label{lem_McapOmega1}
Suppose $M \prec (H_\theta,\in)$ and $M$ is countable.  If $D$ contains a club subset of $\omega_1$ and $D \in M$, then $M \cap \omega_1 \in D$.
\end{lemma}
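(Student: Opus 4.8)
The plan is to set $\delta := M \cap \omega_1$ and to show that $\delta$ is a countable limit ordinal that is approximated from below by a club contained in $D$, so that closure of that club forces $\delta$ to belong to it. First I would verify that $\delta$ is genuinely an ordinal. Since $\omega_1$ is definable in $(H_\theta,\in)$ we have $\omega_1 \in M$, and every natural number is definable, so $\omega \subseteq M$. For transitivity, suppose $\alpha \in M \cap \omega_1$; then $(H_\theta,\in)$ models ``$\alpha$ is countable'', so by elementarity $M$ contains a surjection $f : \omega \to \alpha$. As $\omega \subseteq M$, every value $f(n)$ lies in $M$, whence $\alpha \subseteq M$. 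Thus $M \cap \omega_1$ is transitive, hence an ordinal, and it is countable because $M$ is; so $\delta < \omega_1$. It is moreover a limit ordinal, since $\alpha \in M$ forces $\alpha + 1 \in M$.

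Next I would use elementarity to pull a club witness inside $M$. By hypothesis $(H_\theta,\in)$ models ``there is a club subset $C$ of $\omega_1$ with $C \subseteq D$'', and since $D \in M$ and $M \prec (H_\theta,\in)$, such a $C$ can be taken with $C \in M$. One may absorb this passage from $D$ to $C$ at the outset, so it suffices to prove the statement for a club $C \in M$ and then conclude $\delta \in C \subseteq D$.

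The core step is to show that $C$ is unbounded in $\delta$. Fix any $\alpha < \delta$; then $\alpha \in M$, and since $C$ is unbounded in $\omega_1$, $(H_\theta,\in)$ models ``there exists $\gamma \in C$ with $\gamma > \alpha$''. By elementarity, using $\alpha, C \in M$, such a $\gamma$ can be found in $M$, so $\gamma \in M \cap \omega_1 = \delta$; thus $\alpha < \gamma < \delta$ and $\gamma \in C$. Hence $\sup(C \cap \delta) = \delta$. Since $\delta$ is a limit ordinal and $C$ is closed, $\delta \in C$, and therefore $\delta \in D$.

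I do not expect a serious obstacle here; the only points that require care are the two elementarity transfers—first, that countability of $\alpha$ yields a surjection inside $M$, giving transitivity of $\delta$; second, that unboundedness of $C$ below each $\alpha \in M$ supplies a witness inside $M$—together with remembering to confirm that $\delta$ is a limit ordinal before invoking closure of $C$.
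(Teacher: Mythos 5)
Your proof is correct and is the standard argument; the paper itself states this lemma without proof, deferring such routine facts to the references, and what you wrote is exactly the expected verification (that $M\cap\omega_1$ is a countable limit ordinal, that a club $C\subseteq D$ can be chosen inside $M$, and that elementarity makes $C$ unbounded in $M\cap\omega_1$ so that closure of $C$ finishes the job). All the elementarity transfers you flag are handled correctly, so there is nothing to fix.
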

\begin{lemma}
If $\mathfrak{A}=(A,\dots)$ is a structure in a countable signature, and $\mathfrak{A} \in M \prec (H_\theta,\in)$, then $M \cap A \prec \mathfrak{A}$.
\end{lemma}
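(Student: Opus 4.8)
The plan is to verify the Tarski--Vaught criterion for $M \cap A$ as a subset of $\mathfrak{A}$, exploiting the single fact that makes such ``$M \prec H_\theta$'' arguments work: since $\mathfrak{A}$ is a \emph{set} lying in $H_\theta$, the satisfaction relation of $\mathfrak{A}$ is definable over $(H_\theta,\in)$. Concretely, for each formula $\varphi(x_1,\dots,x_n)$ in the (fixed, countable) signature of $\mathfrak{A}$, there is a formula $\psi_\varphi(w, y_1, \dots, y_n)$ in the language of set theory such that for every structure $\mathfrak{B}$ in that signature with universe $B$ and every tuple $\bar b$ from $B$,
\[
(H_\theta,\in) \models \psi_\varphi(\mathfrak{B}, \bar b) \quad \Longleftrightarrow \quad \mathfrak{B} \models \varphi(\bar b).
\]
This is the usual Tarski definition of truth, carried out by (meta)induction on the complexity of $\varphi$; it is legitimate precisely because $\mathfrak{B}$ is a set, so Tarski's undefinability theorem does not obstruct it. I will assume $\theta$ is large enough that $(H_\theta,\in)$ verifies the instances of separation and recursion needed for this, as is standard.

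With this in hand, first note $A \in M$: the universe $A$ is definable from the parameter $\mathfrak{A} \in M$, so elementarity of $M$ in $(H_\theta,\in)$ gives $A \in M$. Assuming $A \neq \emptyset$, the sentence $\exists x\, (x \in A)$ holds in $(H_\theta,\in)$ with parameter $A \in M$, so $M \cap A \neq \emptyset$. Now take any $\varphi(x, y_1, \dots, y_n)$ and parameters $a_1, \dots, a_n \in M \cap A$, and suppose $\mathfrak{A} \models \exists x\, \varphi(x, \bar a)$. By the displayed equivalence applied to the formula $\exists x\, \varphi$, the structure $(H_\theta,\in)$ satisfies $\psi_{\exists x\,\varphi}(\mathfrak{A}, \bar a)$, and unwinding one existential this means
\[
(H_\theta,\in) \models \exists b\, \big( b \in A \ \wedge\ \psi_\varphi(\mathfrak{A}, b, \bar a) \big),
\]
a statement all of whose parameters $\mathfrak{A}, a_1, \dots, a_n$ lie in $M$. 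Since $M \prec (H_\theta,\in)$, there is a witness $b \in M$ with $b \in A$ and $(H_\theta,\in) \models \psi_\varphi(\mathfrak{A}, b, \bar a)$; then $b \in M \cap A$ and, by the equivalence again, $\mathfrak{A} \models \varphi(b, \bar a)$. Thus every existential statement over $\mathfrak{A}$ with parameters from $M \cap A$ has a witness in $M \cap A$, which is exactly the Tarski--Vaught condition.

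It remains only to observe that the Tarski--Vaught condition, together with $M \cap A \neq \emptyset$, already guarantees that $M \cap A$ is the universe of a substructure closed under the operations of $\mathfrak{A}$ and containing its distinguished constants: one applies the condition to the formulas $x = f(\bar y)$ and $x = c$. Hence $M \cap A \prec \mathfrak{A}$. The only genuinely delicate point is the enabling fact from the first paragraph---that truth in the set model $\mathfrak{A}$ is uniformly definable in $(H_\theta,\in)$ formula-by-formula---since everything afterward is a direct appeal to the elementarity of $M$. I would be slightly careful to emphasize that $\varphi$ ranges over object-language formulas while $\psi_\varphi$ is its metatheoretic translate, so that each use of elementarity is for a single fixed first-order formula of set theory.
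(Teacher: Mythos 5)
Your proof is correct and is the standard argument: the paper states this lemma without proof as a well-known fact, and the definability of satisfaction for set structures over $(H_\theta,\in)$ followed by the Tarski--Vaught test is exactly the argument it implicitly relies on. Your care in distinguishing the object-language formula $\varphi$ from its metatheoretic translate $\psi_\varphi$, and in noting that Tarski--Vaught also yields closure under the functions and constants of $\mathfrak{A}$, covers the only points where such a proof could go wrong.
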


If $W$ is a set, a \textbf{filtration} of $W$ is a $\subseteq$-continuous and $\subseteq$-increasing sequence $\langle N_i \ :  \ i < |W| \rangle$, with union $W$, such that $|N_i|<|W|$ for all $i < |W|$.  If $M$ and $N$ are sets, we write $\boldsymbol{M \sqsubseteq N}$ iff $M \subseteq N$ and $M \cap \omega_1 = N \cap \omega_1$.  A set $S \subseteq [H_\theta]^\omega$ is \textbf{semistationary} (in $[H_\theta]^\omega$) if 
\[
\{ N \in [H_\theta]^\omega \ : \ \exists M \in S \ M \sqsubseteq N   \}
\]
is stationary.  A partial order is \textbf{proper} if it preserves all stationary subsets of $[H_\theta]^\omega$ (for all large enough $\theta$), and \textbf{semiproper} if it preserves all semistationary subsets of $[H_\theta]^\omega$ (for all large enough $\theta$).

The following lemma is probably the most frequently used lemma in the entire subject. Intuitively, it says that for an uncountable set $W$ and some fixed objects outside of $W$, almost every subset of $W$ can have those new objects adjoined to them, \emph{without} adding new elements of $W$.

\begin{lemma}\label{lem_DontEscapeMcapW}
Suppose $W$ is any uncountable set, $H$ is any superset of $W$, and $\mathfrak{B}$ is a Skolemized structure on $H$ in a countable language.\footnote{In applications, $\mathfrak{B}$ will often include $W$ as a predicate (or even a constant, if $W \in H$).}  Then for ``almost every" $M \in \wp(W)$, 
\[
\text{Hull}^{\mathfrak{B}}(M) \cap W= M.
\]
In other words, letting $C^{\mathfrak{B}}$ denote the set of $M \in \wp(W)$ for which the equation holds, we have that $\wp(W) \setminus C^{\mathfrak{B}}$ is nonstationary in $\wp(W)$.
\end{lemma}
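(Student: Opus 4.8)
The plan is to exhibit a single auxiliary structure $\mathfrak{A}$ on $W$, in a countable language, all of whose elementary submodels already satisfy the desired equation; since the set of such submodels is co-nonstationary in $\wp(W)$ by the formulation of stationarity recorded in the preliminaries, this will immediately give the lemma. First I would note that the inclusion $M \subseteq \text{Hull}^{\mathfrak{B}}(M) \cap W$ is automatic, so only the reverse inclusion is at issue: I must arrange that no element of the hull which happens to land back inside $W$ is genuinely new relative to $M$. Because $\mathfrak{B}$ is Skolemized and its language is countable, there are only countably many Skolem terms $h_0, h_1, \dots$, and one has $\text{Hull}^{\mathfrak{B}}(M) = \{ h_k^{\mathfrak{B}}(\vec{q}) : k < \omega, \ \vec{q} \in M^{<\omega} \}$; in particular every element of the hull is a term-value with parameters drawn from $M$.

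Next I would fix some $w_0 \in W$ (possible since $W$ is uncountable, hence nonempty) and, for each Skolem term $h_k$ of arity $n_k$, define a total ``clamped'' function $F_k : W^{n_k} \to W$ by setting $F_k(\vec{q}) = h_k^{\mathfrak{B}}(\vec{q})$ whenever $h_k^{\mathfrak{B}}(\vec{q}) \in W$, and $F_k(\vec{q}) = w_0$ otherwise. Let $\mathfrak{A}$ be the structure on $W$ whose functions are $\langle F_k : k < \omega \rangle$. I claim that every $M \prec \mathfrak{A}$ lies in $C^{\mathfrak{B}}$. Indeed, take any $z \in \text{Hull}^{\mathfrak{B}}(M) \cap W$ and write $z = h_k^{\mathfrak{B}}(\vec{q})$ with $\vec{q} \in M^{<\omega}$; since $z \in W$, the definition of $F_k$ gives $z = F_k(\vec{q})$, and because $M \prec \mathfrak{A}$ is closed under $F_k$ and $\vec{q}$ consists of elements of $M$, it follows that $z \in M$. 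This yields $\text{Hull}^{\mathfrak{B}}(M) \cap W \subseteq M$, and hence equality.

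Finally, since $\mathfrak{A}$ is a structure on $W$ in a countable language, the set $\{ M \in \wp(W) : M \prec \mathfrak{A} \}$ has nonstationary complement in $\wp(W)$, by the equivalent formulation of stationarity in the preliminaries. As this set is contained in $C^{\mathfrak{B}}$, we get $\wp(W) \setminus C^{\mathfrak{B}} \subseteq \wp(W) \setminus \{ M : M \prec \mathfrak{A}\}$, and the latter is nonstationary; since the nonstationary sets form an ideal (are closed downward under $\subseteq$), $\wp(W) \setminus C^{\mathfrak{B}}$ is nonstationary as well, which is exactly the assertion.

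The step I expect to carry the real content is the reduction in the first paragraph: the fact that, in a \emph{Skolemized} structure, the hull of $M$ is precisely the set of term-values with parameters in $M$, so that trapping every hull element that returns to $W$ amounts to closing $M$ under only the countably many clamped functions $F_k$. Everything after that is bookkeeping, the one mild subtlety being the use of the default value $w_0$ to make each $F_k$ total on $W$ even when the ambient term-value escapes $W$.
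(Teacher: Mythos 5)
Your proof is correct, but it takes a genuinely different route from the one in the paper. You argue directly: you enumerate the countably many Skolem terms, clamp each one to a total function $F_k$ on $W$ (using a default value $w_0$ when the term-value escapes $W$), and observe that any $M$ closed under all the $F_k$ already satisfies $\mathrm{Hull}^{\mathfrak{B}}(M)\cap W\subseteq M$; since the $M$ elementary in a fixed countable structure on $W$ have nonstationary complement by the very definition of (weak) stationarity, you are done. The paper instead argues by contradiction and presses down: assuming the bad set $S$ is stationary, it assigns to each $M\in S$ an escaping term $h^M$ and tuple $\vec q^{\,M}\in M^{<\omega}$, stabilizes $h^M$ on a stationary subset using $\sigma$-completeness of the nonstationary ideal, stabilizes the tuple using Fodor's Lemma coordinate by coordinate, and then derives a contradiction from $\bigcup S^*=W$. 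The two arguments exploit the same underlying fact (countably many Skolem terms, parameters from $M$), but yours exhibits the witnessing ``club'' explicitly and avoids Fodor altogether, which is arguably cleaner and makes the downward-closure bookkeeping at the end the only thing left to check; the paper's pressing-down argument is chosen in part to showcase the Fodor/$\sigma$-completeness technique that recurs throughout the survey. One small point worth making explicit in your write-up: an element of the hull may be the value of a \emph{composite} term whose intermediate values leave $W$, so it matters that you clamp whole terms rather than the individual Skolem functions --- your phrasing ``Skolem terms'' handles this, but it deserves a sentence.
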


\begin{proof}
Suppose toward a contradiction that $S:=\wp(W) \setminus C^{\mathfrak{B}}$ were stationary in $\wp(W)$ (recall that we are using the notion of ``weak" stationarity).  Then for every $M \in S$, there is some $\mathfrak{B}$-Skolem function $h^M$ and some finite tuple $\vec{q}^M$ from $M$ such that $h^M(\vec{q}^M) \in W \setminus M$.  Since $\mathfrak{B}$ is in a countable language, there are only countably many Skolem functions; so by the $\sigma$-completness of the nonstationary ideal, there is a stationary $S_1 \subseteq S$ and a fixed Skolem function $h$ such that $h^M = h$ for every $M \in S_1$.  Let $n$ denote the arity of $h$.  Then by repeated use of Fodor's Lemma $n$ times (on the regressive maps $M \mapsto \vec{q}^M(0)$, then $M \mapsto \vec{q}^M(1)$, etc.) there is a stationary $S^* \subseteq S_1$ and a fixed $n$-tuple $\vec{q}^{ \ *}$ such that $\vec{q}^{\ *} = \vec{q}^M$ (and hence $h(\vec{q}^{\ *}) \in W \setminus M$) for every $M \in S^*$.  

In summary, $y^*:=h(\vec{q}^{\ *}) \in W \setminus M$ for every $M$ in $S^*$.  And $S^*$ is stationary in $\wp(W)$, which implies $\bigcup S^* = W$.  Since $y^* \in W$, there is some $M \in S^*$ such that $y^* \in M$, a contradiction.
\end{proof}

To illustrate a typical use of Lemma \ref{lem_DontEscapeMcapW}, and because the proof involves simple but powerful techniques that are used so often in this area, we prove the following lemma of Foreman.   Recall the definition of ``catching" an antichain appeared in Section \ref{sec_IntroAntichainCatching}.  The use of Lemma \ref{lem_DontEscapeMcapW} is in the \eqref{item_CatchOne} $\implies$ \eqref{item_Sat} direction of the proof.

\begin{lemma}[Special case of Lemma 3.46 of Foreman~\cite{MattHandbook}]\label{lem_Foreman_CharSat}
The following are equivalent (in what follows, ``maximal antichain" means a maximal antichain in $\wp(\omega_1)/\text{NS}_{\omega_1}$):
\begin{enumerate}
 \item\label{item_Sat} $\text{NS}_{\omega_1}$ is saturated.
 \item\label{item_CatchAll} For every regular $\theta > 2^{\omega_1}$, there are club-many $N \in [H_\theta]^\omega$ such that for every maximal antichain $\mathcal{A} \in N$, $N$ catches $\mathcal{A}$. 
 \item\label{item_CatchOne} For every maximal antichain $\mathcal{A}$ and every regular $\theta > 2^{\omega_1}$, club-many $N \in [H_\theta]^\omega$ catch $\mathcal{A}$.

\end{enumerate}
\end{lemma}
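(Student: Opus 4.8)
The plan is to prove the cycle \eqref{item_Sat} $\Rightarrow$ \eqref{item_CatchAll} $\Rightarrow$ \eqref{item_CatchOne} $\Rightarrow$ \eqref{item_Sat}. The implication \eqref{item_CatchAll} $\Rightarrow$ \eqref{item_CatchOne} is immediate: fixing a maximal antichain $\mathcal{A}$ and a regular $\theta > 2^{\omega_1}$, one intersects the club given by \eqref{item_CatchAll} with the club $\{N \in [H_\theta]^\omega : \mathcal{A} \in N\}$, and every $N$ in the intersection catches all antichains it contains, in particular $\mathcal{A}$. So the content lies in the two remaining implications, and Lemma \ref{lem_DontEscapeMcapW} is used for \eqref{item_CatchOne} $\Rightarrow$ \eqref{item_Sat}.

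For \eqref{item_Sat} $\Rightarrow$ \eqref{item_CatchAll}, fix a wellorder $\Delta$ of $H_\theta$ and let $N \prec (H_\theta,\in,\Delta)$ be countable; I claim $N$ catches every maximal antichain $\mathcal{A} \in N$, which is stronger than \eqref{item_CatchAll}. By \eqref{item_Sat} every maximal antichain has size at most $\omega_1$, so working inside $(H_\theta,\in,\Delta)$ I may fix, definably from $\mathcal{A}$, an enumeration $\langle S_\xi : \xi < \lambda \rangle$ of $\mathcal{A}$ with $\lambda \le \omega_1$. The key point is that the diagonal union $D := \{\alpha < \omega_1 : \exists \xi < \alpha\ \alpha \in S_\xi\}$ contains a club. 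Indeed, if $T := \omega_1 \setminus D$ were stationary, then for each $\xi$ one has $T \cap S_\xi \subseteq \xi + 1$, so $T$ would be a stationary set whose intersection with every member of $\mathcal{A}$ is countable, hence nonstationary, contradicting the maximality of $\mathcal{A}$. Fixing (again definably from $\mathcal{A}$) a club $C_{\mathcal{A}} \subseteq D$, I get $C_{\mathcal{A}} \in N$, so Lemma \ref{lem_McapOmega1} gives $\delta := N \cap \omega_1 \in C_{\mathcal{A}} \subseteq D$. Thus there is $\xi < \delta$ with $\delta \in S_\xi$, and since $\xi < \delta = N \cap \omega_1$ and the enumeration lies in $N$, we obtain $S_\xi \in N \cap \mathcal{A}$ with $\delta \in S_\xi$, i.e.\ $N$ catches $\mathcal{A}$.

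For \eqref{item_CatchOne} $\Rightarrow$ \eqref{item_Sat} I argue the contrapositive: assuming $\mathrm{NS}_{\omega_1}$ is not saturated, I will produce a maximal antichain $\mathcal{A}$ and a regular $\theta > 2^{\omega_1}$ for which stationarily many $N \in [H_\theta]^\omega$ fail to catch $\mathcal{A}$. Fix a maximal antichain $\mathcal{A}$ with an enumerated subfamily $\langle S_\eta : \eta < \omega_2 \rangle$, and let $\mathfrak{B} = (H_\theta,\in,\Delta,\mathcal{A})$ be Skolemized. The idea is to locate a \emph{fresh} block $S_{\gamma^*}$ whose index $\gamma^*$ can be kept out of a countable hull: if $N \prec \mathfrak{B}$ has $\gamma^* \notin N$ (so $S_{\gamma^*} \notin N$) but $N \cap \omega_1 \in S_{\gamma^*}$, then, because $S_{\gamma^*}$ is almost disjoint from every other member of $\mathcal{A}$, the only blocks $S \in N \cap \mathcal{A}$ that could witness catching are those with $N \cap \omega_1 \in S_{\gamma^*} \cap S$; and $\bigcup_{S \in N \cap \mathcal{A}}(S_{\gamma^*} \cap S)$ is a countable union of nonstationary sets. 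So once $N \cap \omega_1$ avoids this union, $N$ does not catch $\mathcal{A}$. Here Lemma \ref{lem_DontEscapeMcapW} does the essential work: applying it with $W = \omega_2$ and $\mathfrak{B}$, I build $N$ from a seed $M \subseteq \omega_2$ with $\mathrm{Hull}^{\mathfrak{B}}(M) \cap \omega_2 = M$, so that $N$ adds no new ordinals below $\omega_2$ --- in particular no new countable ordinals and not the forbidden index $\gamma^*$ --- while arranging $N \cap \omega_1$ to land in $S_{\gamma^*}$ off the nonstationary junk. Carrying this out for a stationary set of possible values of $N \cap \omega_1$ yields stationarily many non-catchers.

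The main obstacle is exactly the tension in the previous paragraph: to force $N \cap \omega_1 \in S_{\gamma^*}$ one naively wants $S_{\gamma^*}$, hence $\gamma^*$, inside $N$, yet avoiding catching forbids $\gamma^* \in N$. This is the ``adjoin only the things you want'' phenomenon, and Lemma \ref{lem_DontEscapeMcapW} is precisely the tool that resolves it, letting the hull absorb a rich initial part of the antichain without dragging in $\gamma^*$ or new countable ordinals. A secondary but genuine difficulty is choosing the fresh block well: one must guarantee that $S_{\gamma^*}$ is not almost everywhere absorbed by the blocks already present in the approximating model, i.e.\ that the relevant diagonal remainder of $S_{\gamma^*}$ stays stationary --- and this is where the size $\omega_2$ of the antichain (as opposed to $\le \omega_1$) enters, mirroring the role of saturation in the diagonal-union argument of the first implication.
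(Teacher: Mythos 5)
Your implications \eqref{item_Sat} $\Rightarrow$ \eqref{item_CatchAll} $\Rightarrow$ \eqref{item_CatchOne} are correct and essentially identical to the paper's: saturation puts an ($\le\omega_1$)-enumeration of any $\mathcal{A}\in N$, hence its diagonal union and a club inside it, into $N$, and Lemma \ref{lem_McapOmega1} finishes. The problem is in \eqref{item_CatchOne} $\Rightarrow$ \eqref{item_Sat}, where your key step is circular as written: the ``nonstationary junk'' that $N\cap\omega_1$ must avoid, namely $\bigcup_{S\in N\cap\mathcal{A}}(S\cap S_{\gamma^*})$, is defined in terms of the very $N$ you are constructing, so you cannot simply ``arrange $N\cap\omega_1$ to land in $S_{\gamma^*}$ off'' it. Moreover, applying Lemma \ref{lem_DontEscapeMcapW} with $W=\omega_2$ gives no control over \emph{which} blocks end up in $N\cap\mathcal{A}$ as the seed $M$ varies over $[\omega_2]^\omega$, so there is no fixed nonstationary set to dodge and no principled way to pick $\gamma^*$ in advance. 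Your remark about the ``diagonal remainder'' shows you see the issue, but the repair is not carried out: one must first pin down an $\omega_1$-sized pool $W$ of candidate blocks --- e.g.\ $W:=\text{Hull}^{\mathfrak{B}}(\omega_1)$ as in the paper, or the union of an $\omega_1$-chain of the countable hulls under consideration --- choose $\gamma^*$ with $S_{\gamma^*}\notin W$ (possible precisely because $|\mathcal{A}|\ge\omega_2>|W|$), and only then form the \emph{diagonal} union of $\{S\cap S_{\gamma^*} : S\in W\cap\mathcal{A}\}$ along an $\omega_1$-enumeration of $W$; that single nonstationary set, independent of $N$, is what $N\cap\omega_1$ can be arranged to avoid.

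The paper sidesteps the diagonal union at this step entirely, and its mechanism is worth adopting: having fixed $S\in\mathcal{A}\setminus W$ and found, via Lemma \ref{lem_DontEscapeMcapW} applied inside $[W]^\omega$ rather than $[\omega_2]^\omega$, a countable $N\prec\mathfrak{B}|W$ with $N\cap\omega_1\in S$ and $\text{Hull}^{\mathfrak{B}'}(N)\cap W=N$ (where $\mathfrak{B}'=\mathfrak{B}^\frown S$), it sets $N':=\text{Hull}^{\mathfrak{B}'}(N)$. Then $N'\cap\omega_1=N\cap\omega_1$, and if some $T\in N\cap\mathcal{A}$ witnessed catching, both $S$ and $T$ would be \emph{elements} of $N'$, so the nonstationary set $S\cap T$ would be an element of $N'$ containing $N'\cap\omega_1$, contradicting Lemma \ref{lem_McapOmega1}. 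No avoidance of an $N$-dependent set is needed; elementarity of the enlarged hull does the work. Rewrite the third implication along one of these two lines and the proof is complete; the rest stands.
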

\begin{proof}
\eqref{item_Sat} $\implies$ \eqref{item_CatchAll}:  Assume $\text{NS}_{\omega_1}$ is saturated, and $N \prec (H_\theta,\in)$.  Let $\mathcal{A} \in N$ be a maximal antichain; then $N$ sees that $|\mathcal{A}| \le \omega_1$, and hence that the diagonal union of $\mathcal{A}$ contains a club $D$.  Then $N \cap \omega_1 \in D$, and hence $N \cap \omega_1 \in \bigtriangledown \mathcal{A}$.   It follows that there is some $S \in N \cap \mathcal{A}$ such that $N \cap \omega_1 \in S$.

\eqref{item_CatchAll} $\implies$ \eqref{item_CatchOne}:  Given a particular $\mathcal{A}$, there are club-many $N$ with $\mathcal{A} \in N$.  By assumption, club many of those $N$ catch all of their antichains, so in particular they catch $\mathcal{A}$.

\eqref{item_CatchOne} $\implies$ \eqref{item_Sat}:  assume \eqref{item_CatchOne}.  Let $\mathcal{A}$ be a maximal antichain, and let $\mathfrak{B}=(H_\theta,\dots)$ be a Skolemized structure witnessing that the $\mathcal{A}$-catching sets form a club in $[H_\theta]^\omega$; so for every countable $N \prec \mathfrak{B}$, $N$ catches $\mathcal{A}$.  Let 
\[
W:= \text{Hull}^{\mathfrak{B}}(\omega_1),
\]
and observe that $|W|=\omega_1$.  Suppose for a contradiction that $|\mathcal{A}| \ge \omega_2$; fix some $S \in \mathcal{A} \setminus W$ for the remainder of the proof.  Let $\mathfrak{B}':= \mathfrak{B}^\frown S$.  By Lemma \ref{lem_DontEscapeMcapW}, almost every $N \in [W]^\omega$ has the property that
\[
\text{Hull}^{\mathfrak{B}'}(N) \cap W = N.
\] 
In particular, we can easily find such an $N$ such that, in addition, $N \cap \omega_1 \in S$ and $N \prec \mathfrak{B} | W$ (note that $W$ is elementary in $\mathfrak{B}$, so $\mathfrak{B} | W$ makes sense).  Set $N':= \text{Hull}^{\mathfrak{B}'}(N)$.  Then, in particular, $N \cap \omega_1 = N' \cap \omega_1$; let $\delta$ denote this ordinal.  Furthermore, since $N \prec \mathfrak{B}$, $N$ catches $\mathcal{A}$; so there is some $T \in N \cap \mathcal{A}$ such that $\delta \in T$.  Now $T \in N$ but $S \notin W \supset N$; in particular, $S$ and $T$ are distinct members of the antichain $\mathcal{A}$, and hence $S \cap T$ is nonstationary.  But $S$ and $T$ are both elements of $N'$, and $N' \cap \omega_1 \in S \cap T$.  This contradicts Lemma \ref{lem_McapOmega1}.
\end{proof}

\section{ZFC results:  some common projective stationary sets}\label{sec_ZFC_ProjStat}

 Feng-Jech~\cite{MR1668171} defined a subset $P \subseteq [H_\theta]^\omega$ to be  \emph{projective stationary} iff for every stationary $T \subseteq \omega_1$, the set
\[
P \searrow T:= \{ M \in P \ : \ M \cap \omega_1 \in T \}
\]
is stationary in $[H_\theta]^\omega$.

For the rest of the section, we prove several ZFC results, which often conclude that there are \emph{projective stationarily many} $M \in [H_\theta]^\omega$ with some nice extension property.  As we will see in subsequent sections, to move from \emph{projective stationarily many} to \emph{club many} results in a statement that not only is independent of ZFC, but has large cardinal strength.

The following lemma is the ZFC result alluded to in Section \ref{sec_Intro_SCC} above; it can be viewed as a ZFC-provable version of the principle Global $\text{SCC}^{\text{cof}}_{\text{gap}}$ that will be introduced in Section \ref{sec_SCC_WRP}.  The proof makes use of the notion of an \textbf{internally approachable} set of size $\omega_1$; this is a set $W$ such that there is some $\subseteq$-increasing and continuous sequence $\vec{N}=\langle N_i \ : \ i < \omega_1 \rangle$ of countable sets, with union $W$, such that every proper initial segment of $\vec{N}$ is an element of $W$.  $\text{IA}_{\omega_1}$ denotes the class of sets that are internally approachable of size $\omega_1$.  The following facts are well-known and easy to prove:
\begin{fact}\label{fact_IA}
Suppose $\theta \ge \omega_2$ is regular.
\begin{itemize}
 \item $\text{IA}_{\omega_1} \cap [H_\theta]^{\omega_1}$ is stationary in $[H_\theta]^{\omega_1}$.
 \item If $\omega_2 \le \lambda < \theta$, $\lambda$ is regular, $W \prec (H_\theta,\in)$, and $\lambda \in W \in \text{IA}_{\omega_1}$, then $W \cap H_\lambda \in \text{IA}_{\omega_1}$.
 \item If $W \in \text{IA}_{\omega_1}$ then $W \cap [W]^\omega$ contains a club subset of $[W]^\omega$ (this latter property is called \emph{internally club} by Foreman-Todorcevic~\cite{MR2115072}).
 \item If $W \in \text{IA}_{\omega_1}$ and $W \in M \prec (H_\theta,\in)$, where $M$ is countable, then $M \cap W \in W$ (this really just follows from the internal clubness of $W$).
\end{itemize}
\end{fact}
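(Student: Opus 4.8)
The plan is to treat the four bullets in turn, reusing the witnessing-sequence machinery throughout, and the only genuinely delicate point will be the third bullet (internal clubness), from which the fourth follows formally. For the first bullet I would fix an arbitrary structure $\mathfrak{A}$ on $H_\theta$ in a countable language, expand it to have definable Skolem functions, and build a $\subseteq$-increasing continuous chain $\langle N_i : i<\omega_1\rangle$ of countable elementary substructures by recursion: at successors set $N_{i+1}:=\text{Hull}^{\mathfrak{A}}\bigl(N_i\cup\{\langle N_j : j\le i\rangle\}\bigr)$, and at limits take unions. Each $N_{i+1}$ then contains $\langle N_j : j\le i\rangle=\vec{N}\restriction(i+1)$, and since $\mathrm{dom}$ and restriction are definable, elementarity yields $\vec{N}\restriction\alpha\in N_{\alpha+1}\subseteq W$ for every $\alpha<\omega_1$ (at a limit $\alpha$ one recovers $\vec{N}\restriction\alpha$ as the restriction of $\vec{N}\restriction(\alpha+1)$ to $\alpha$). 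Thus $W:=\bigcup_i N_i$ is internally approachable of size $\omega_1$ and elementary in $\mathfrak{A}$, so $\text{IA}_{\omega_1}\cap[H_\theta]^{\omega_1}$ is stationary. I would also record here the observation, used below, that whenever $W\in\text{IA}_{\omega_1}$ and $W\prec(H_\theta,\in)$ one has $\omega_1\subseteq W$: each index $i=\mathrm{dom}(\vec{N}\restriction i)$ lies in $W$ by elementarity, and $W\cap\omega_1$ is an ordinal, so $W\cap\omega_1=\omega_1$.

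For the second bullet, with $\vec{N}$ witnessing $W\in\text{IA}_{\omega_1}$, I would propose $\langle N_i\cap H_\lambda : i<\omega_1\rangle$ as a witness for $W\cap H_\lambda\in\text{IA}_{\omega_1}$. It is $\subseteq$-increasing, continuous, has union $W\cap H_\lambda$, and has countable entries; since $\lambda\in W\prec H_\theta$ we have $H_\lambda\in W$, so each initial segment $\langle N_j\cap H_\lambda : j<\alpha\rangle$ is definable from $\vec{N}\restriction\alpha$ and $H_\lambda$ and hence lies in $W$, while regularity of $\lambda\ge\omega_2$ makes this countable sequence of countable subsets of $H_\lambda$ an element of $H_\lambda$; so the initial segment lies in $W\cap H_\lambda$. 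The size is exactly $\omega_1$ because $\omega_1\subseteq W\cap H_\lambda$ by the observation from the first bullet.

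The third bullet is where I expect the real work. Given $\vec{N}$ witnessing $W\in\text{IA}_{\omega_1}$, I would argue that the range $\{N_\alpha : \alpha<\omega_1\}$ is a club in $[W]^\omega$ contained in $W$. Closure follows from $\subseteq$-continuity of the chain, and $\subseteq$-cofinality follows because the chain has length $\omega_1$ with union $W$, so any countable $x\subseteq W$ meets only countably many indices and is therefore contained in some $N_\alpha$. The delicate step, and the main obstacle, is to see that $N_\alpha\in W$ for each $\alpha$: this is exactly where the initial-segment hypothesis is essential, since from $\vec{N}\restriction(\alpha+1)\in W$ one recovers $N_\alpha$ as the value at $\alpha$ of this sequence, and—taking the witnessing models to be elementary submodels, as the construction in the first bullet provides—this value lies in $W$. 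Hence $W\cap[W]^\omega$ contains this club, i.e.\ $W$ is internally club.

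For the fourth bullet I would reduce to the third, exactly as the parenthetical suggests. Since $W\in M\prec(H_\theta,\in)$ with $M$ countable, $M\cap W$ is an infinite countable subset of $W$ (as $M$ contains an injection of $\omega$ into the uncountable $W$). By the third bullet and elementarity there is a club $D\subseteq[W]^\omega$ with $D\subseteq W$ and $D\in M$; shrinking $D$, I would fix a function $f\colon[W]^{<\omega}\to W$ in $M$ whose closure points all lie in $D$. Then $M\cap W$ is closed under $f$, since for a finite $a\subseteq M\cap W$ we have $a\in M$ and hence $f(a)\in M\cap W$, so $M\cap W\in D\subseteq W$, as required. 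The only remaining points are the standard facts that clubs in $[W]^\omega$ are generated by such closure functions and that $M$ can capture this data by elementarity, both of which are routine.
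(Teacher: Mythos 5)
Your proof is correct, and it is the standard argument; the paper states Fact \ref{fact_IA} without proof (it is labeled ``well-known and easy to prove''), so there is no argument of the author's to compare yours against. The devices you use --- hulling in the preceding initial segment of the chain for the first bullet, intersecting the witnessing filtration with $H_\lambda$ (and using regularity of $\lambda$ to see the truncated filtration lands in $H_\lambda$) for the second, taking the range of the filtration as the club for the third, and realizing $M\cap W$ as a closure point of a function in $M$ generating that club for the fourth --- are exactly the ones these facts rest on in the literature.

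One point you flag as ``the delicate step'' deserves to be made fully explicit: to conclude $N_\alpha\in W$ from $\vec{N}\restriction(\alpha+1)\in W$ you need $W$ to be closed under function evaluation, e.g.\ $W\prec(H_\theta,\in)$, or the $N_i$ to be elementary in a structure extending $(H_\theta,\in)$ as your first-bullet construction arranges. This is not guaranteed by the paper's bare definition of $\text{IA}_{\omega_1}$ (a continuous increasing sequence of countable sets whose proper initial segments lie in the union): one can build, by the obvious recursion, a set of the form $W=\{\vec{N}\restriction j \ : \ 1\le j<\omega_1\}$ which satisfies that definition yet contains essentially no countable subsets of itself, so the third bullet read completely literally fails. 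Your implicit extra hypothesis is therefore doing real work and is necessary; it is also harmless, since every $W\in\text{IA}_{\omega_1}$ to which the paper applies the third or fourth bullet is an elementary substructure of some expansion of $(H_\theta,\in)$. The same closure is what licenses your observation $\omega_1\subseteq W$ (via $i=\text{dom}(\vec{N}\restriction i)\in W$), which you correctly invoke only in the bullets that assume $W\prec(H_\theta,\in)$.
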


\begin{lemma}\label{lem_ZFC_GlobalSCC}
Given a regular $\theta \ge \omega_2$ and a Skolemized structure $\mathfrak{A}$ in a countable language extending $(H_\theta,\in)$, there are \textbf{projective-stationarily} many $M \in [H_\theta]^\omega$ such that
\[
\Gamma^{\mathfrak{A}}(M):=\{ W \in [H_\theta]^{\omega_1} \ : \ \omega_1 \subset W \prec \mathfrak{A} \text{ and }   M^{\mathfrak{A}}(W) \cap W = M   \}
\]
is stationary in $[H_\theta]^{\omega_1}$.
\end{lemma}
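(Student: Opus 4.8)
The plan is to verify the definition of projective stationarity directly: I fix once and for all a stationary $T\subseteq\omega_1$ and an algebra $F\colon[H_\theta]^{<\omega}\to H_\theta$, and then produce a countable $M$ that is closed under $F$, has $M\cap\omega_1\in T$, and has $\Gamma^{\mathfrak A}(M)$ stationary in $[H_\theta]^{\omega_1}$. The entire construction lives one level up: I fix a regular $\lambda$ with $H_\theta\in H_\lambda$ and $2^\theta<\lambda$, a wellorder $\Delta$ of $H_\lambda$, and work inside the Skolemized structure $\mathfrak A^{+}:=(H_\lambda,\in,\Delta,\mathfrak A,\theta,T,F)$. The reason to pass to $H_\lambda$ is that the uncountable ``test objects'' $W\in[H_\theta]^{\omega_1}$ then become \emph{elements} of the models I build, so that the effect of adjoining such a $W$ is exactly the kind of thing controlled by Lemma~\ref{lem_DontEscapeMcapW}.

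The engine is the following fiber computation. Suppose $\mathcal M\in[H_\lambda]^{\omega_1}$ with $\omega_1\subset\mathcal M\prec\mathfrak A^{+}$, let $\mathfrak B:=(H_{\lambda^+},\in,\Delta^{+},\mathfrak A,\theta,\mathcal M)$ be a Skolemization carrying a predicate for $\mathcal M$, and call a countable $M^{*}\subseteq\mathcal M$ with $\mathfrak A,\theta\in M^{*}$ and $M^{*}\prec\mathfrak A^{+}$ \emph{non-escaping} if $\mathrm{Hull}^{\mathfrak B}(M^{*})\cap\mathcal M=M^{*}$. I claim that for any non-escaping $M^{*}$, writing $M:=M^{*}\cap H_\theta$ and $W:=\mathcal M\cap H_\theta$, we get $W\in\Gamma^{\mathfrak A}(M)$. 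Indeed $\omega_1\subset W\prec\mathfrak A$, $|W|=\omega_1$, and $M\subseteq W$; for the inclusion $M^{\mathfrak A}(W)\cap W\subseteq M$, take $z=h(\vec q,W)\in W$ with $h$ an $\mathfrak A$-Skolem function and $\vec q\in M\subseteq M^{*}$. Since $\theta\in M^{*}$ and $\mathcal M$ is a predicate of $\mathfrak B$, the set $W=\mathcal M\cap H_\theta$ lies in $\mathrm{Hull}^{\mathfrak B}(M^{*})$, and $h$ is definable from $\mathfrak A\in M^{*}$; hence $z\in\mathrm{Hull}^{\mathfrak B}(M^{*})$, and as $z\in\mathcal M$ the non-escaping hypothesis gives $z\in M^{*}$, so $z\in M^{*}\cap H_\theta=M$. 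By Lemma~\ref{lem_DontEscapeMcapW} applied to the uncountable $\mathcal M$ and $\mathfrak B$, the non-escaping countable submodels contain a club of $[\mathcal M]^{\omega}$; so if $\mathcal M\in\mathrm{IA}_{\omega_1}$ with filtration $\langle\mathcal M_i:i<\omega_1\rangle$ (Fact~\ref{fact_IA}), then $\{i:\mathcal M_i\text{ is non-escaping}\}$ and $\{i:\mathcal M_i\cap\omega_1=i\}$ are both club. Choosing $\delta\in T$ in their intersection and setting $M^{*}:=\mathcal M_\delta$, $M:=M^{*}\cap H_\theta$ yields an $M$ that is closed under $F$, has $M\cap\omega_1=\delta\in T$, and has at least one witness $W=\mathcal M\cap H_\theta\in\Gamma^{\mathfrak A}(M)$.

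It remains to upgrade ``$\Gamma^{\mathfrak A}(M)\neq\emptyset$'' to stationarity, and this is the heart of the matter. By the definition of (weak) stationarity it suffices, given an arbitrary Skolemized structure $\mathfrak E=(H_\theta,\in,\mathfrak A,\dots)$, to produce a \emph{single} $W\in\Gamma^{\mathfrak A}(M)$ with $W\prec\mathfrak E$. The natural attempt is to rerun the fiber argument relative to $\mathfrak E$: build an internally approachable $\mathcal N\in[H_\lambda]^{\omega_1}$ with $\omega_1\subset\mathcal N\prec(H_\lambda,\in,\Delta,\mathfrak A,\theta,\mathfrak E)$ and with $M\subseteq\mathcal N\cap H_\theta=:W$, so that automatically $W\prec\mathfrak E$; by Lemma~\ref{lem_DontEscapeMcapW} the non-escaping countable submodels of $\mathcal N$ are club in $[\mathcal N]^{\omega}$, and \emph{if our fixed $M$ is non-escaping for $\mathcal N$} then the computation of the previous paragraph gives $W\in\Gamma^{\mathfrak A}(M)$.

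The obstacle I expect to be the crux is precisely the italicized condition: forcing the \emph{pre-chosen} countable $M$ to be a non-escaping submodel of an internally approachable $\mathcal N$ that is \emph{simultaneously} elementary in the arbitrary test structure $\mathfrak E$. The naive route—making $M$ an initial segment of the filtration of $\mathcal N$—breaks down because $\mathfrak E$ need not belong to $M$, so $M$ cannot be written as a union of $\mathfrak E$-elementary submodels, and a direct check (e.g.\ applying the $\Delta^{+}$-least bijection $\omega_1\to\mathcal N$ to a parameter of $M$) shows that non-escaping is an enumeration/initial-segment condition tying $M$ to $\mathcal N$ that is not automatic. The way I would attack this is to not fix $M$ independently of the stationarity certificate at all, but to interleave the two: build the size-$\omega_1$ model and its good countable level within a single internally approachable tower that is elementary in all the ambient data, land the diagonal in $T$ as above, and then use Fact~\ref{fact_IA} (internal clubness, and that $\langle\mathcal M_i:i<\delta\rangle\in\mathcal M$ recomputes $M^{*}=\bigcup_{i<\delta}\mathcal M_i$) to transfer non-escaping to the models met inside a prescribed club, with Lemma~\ref{lem_McapOmega1} controlling the countable-ordinal bookkeeping. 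Making this transfer rigorous—so that the chosen level remains non-escaping against every test structure $\mathfrak E$—is the delicate step on which the lemma turns.
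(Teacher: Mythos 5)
Your ``fiber computation'' is correct and is exactly the computation the paper uses at the end of its proof: if $M = M'\cap W$ with $W \in M'$ (or $W$ named by a predicate available to $M^*$), then any $z = h(\vec q, W) \in M^{\mathfrak A}(W)\cap W$ lands back in $M'\cap W = M$. This much yields projective-stationarily many $M$ with $\Gamma^{\mathfrak A}(M)\neq\emptyset$ (enough for Corollary \ref{cor_ZFC_adjoin}), but not the lemma as stated. The step you flag as ``the delicate step on which the lemma turns'' is in fact a genuine gap: your proposed interleaving cannot work as described, because the test structure $\mathfrak E$ is quantified \emph{after} $M$ is fixed, and no single internally approachable tower built in advance can be elementary in all $2^{\theta}$ candidate structures $\mathfrak E$ simultaneously. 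A fixed countable $M$ cannot ``see'' the arbitrary $\mathfrak E$ against which $\Gamma^{\mathfrak A}(M)$ must be tested, so making $M$ an initial segment of a tower converging to one particular $\mathcal N \prec \mathfrak E$ settles only that one $\mathfrak E$.

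The missing idea is to argue by contradiction and diagonalize over the witnesses to failure. Suppose that for some stationary $T$ the conclusion fails; then there is a single structure $\mathfrak B \supseteq \mathfrak A$ such that \emph{every} countable $M\prec\mathfrak B$ with $M\cap\omega_1\in T$ has $\Gamma^{\mathfrak A}(M)$ nonstationary, and for each such $M$ one fixes a witnessing structure $\mathfrak C_M$ on $H_\theta$. The assignment $M\mapsto\mathfrak C_M$ is now one object $\vec{\mathfrak C}$, which you place in a structure $\mathfrak D$ on some $H_\Omega$ with $\Omega \gg \theta$. Take $W'\prec\mathfrak D$ internally approachable of size $\omega_1$ with $\omega_1\subset W'$, set $W := W'\cap H_\theta$, then take a countable $M'\prec\mathfrak D$ with $W'\in M'$ and $M'\cap\omega_1\in T$, and set $M := M'\cap W$. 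Internal approachability gives $M\in W \subseteq W'$, and since $\vec{\mathfrak C}\in W'$ this forces $\mathfrak C_M\in W'$ and hence $W\prec\mathfrak C_M$; your fiber computation then gives $M^{\mathfrak A}(W)\cap W = M$, contradicting the choice of $\mathfrak C_M$. The point is that the adversarial structure is absorbed by quantifying over all $M$ at once before $W'$ is chosen, rather than by trying to anticipate it inside the tower.
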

\begin{proof}
Let $T$ be a stationary subset of $\omega_1$; we need to prove that there are stationarily many $M \in [H_\theta]^\omega$ such that $M \cap \omega_1 \in T$ and $\Gamma^{\mathfrak{A}}(M)$ is stationary in $[H_\theta]^{\omega_1}$.\footnote{In fact, the proof can be modified to show that (for stationarily many $M$) the set $\text{IA}_{\omega_1} \cap \Gamma^{\mathfrak{A}}(M)$ is stationary.}

Suppose toward a contradiction that this fails.  Then there is a Skolemized structure $\mathfrak{B}$ in a countable language, which we can without loss of generality assume extends $\mathfrak{A}$, such that whenever $M \prec \mathfrak{B}$ is countable and $M \cap \omega_1 \in T$, then $\Gamma^{\mathfrak{A}}(M)$ is nonstationary in $[H_\theta]^{\omega_1}$.  For each such $M$, let $\mathfrak{C}_M$ be a Skolemized structure on $H_\theta$ witnessing the nonstationarity of $\Gamma^{\mathfrak{A}}(M)$ in $[H_\theta]^{\omega_1}$.  So whenever $\mathfrak{C}_M$ is defined, and whenever $W$ is a set such that $|W|=\omega_1 \subset W  \prec  \mathfrak{A}$ and $W \prec \mathfrak{C}_M$, then $M^{\mathfrak{A}}(W) \cap W \supsetneq M$. 

Fix a regular $\Omega   >> \theta$, and let
\[
\mathfrak{D}=(H_\Omega,\in,T, \{\mathfrak{A}, \mathfrak{B}, \vec{\mathfrak{C}} \})
\]
where
\[
\vec{\mathfrak{C}}:= \langle \mathfrak{C}_M \ : \ M \in [H_\theta]^\omega, \ M \prec \mathfrak{B},  \text{ and } M \cap \omega_1 \in T  \rangle
\]
Fix a $W' \prec \mathfrak{D}$ such that $|W'|=\omega_1 \subset W'$, and $W' \in \text{IA}_{\omega_1}$; this is possible by Fact \ref{fact_IA}. Set $W:= W' \cap H_\theta$; then by Fact \ref{fact_IA}, $W$ is also in $\text{IA}_{\omega_1}$.  Also notice that 
\[
W \prec \mathfrak{B}
\]
because $\mathfrak{B} \in W'$.

Now fix a countable $M' \prec \mathfrak{D}$ such that $W' \in M'$ and $M'  \cap \omega_1 \in T$.  Set $M:= M' \cap W$.  Then:
\begin{enumerate}
 \item Because $\mathfrak{B} \in M'$ and $W \prec \mathfrak{B}$, and because $\mathfrak{B}$ is Skolemized, it follows that $M = M' \cap W \prec \mathfrak{B}$. Moreover, $M \cap \omega_1 \in T$, because $\omega_1 \subset W$ and $M' \cap \omega_1 \in T$.  Hence $\mathfrak{C}_M$ is defined.
 \item $M = M' \cap W \in W$, by Fact \ref{fact_IA}.
 \item Since $M \in W = W' \cap H_\theta$ and $\vec{\mathfrak{C}} \in W'$, $\mathfrak{C}_M$ is an element of $W'$.  It follows that $W = W' \cap H_\theta \prec \mathfrak{C}_M$.  
\end{enumerate}

We claim that $M^{\mathfrak{A}}(W) \cap W = M$, which will be a contradiction.  For the nontrivial direction ($\subseteq$), notice that an arbitrary element of $M^{\mathfrak{A}}(W) \cap W$ has the form $h(\vec{p},W)$ for some $\mathfrak{A}$-Skolem function $h$ and some parameter $\vec{p} \in M$, and moreover $h(\vec{p},W) \in W$.  Now $\vec{p}$ and $W$ are both elements of $M'$, and $\mathfrak{A} \in M'$; hence $h(\vec{p},W) \in M'$.  So $h(\vec{p},W) \in W \cap M' = M$, completing the proof.

\end{proof}

\begin{corollary}\label{cor_ZFC_adjoin}
For any regular $\theta \ge \omega_2$ and any Skolemized structure $\mathfrak{A}$ on $H_\theta$, there are (at least) projective stationarily many $M \in [H_\theta]^\omega$ such that, for \emph{some} $\alpha \in \omega_2 \setminus M$, $M(\alpha) \cap \omega_1 = M \cap \omega_1$.  
\end{corollary}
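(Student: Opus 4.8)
The plan is to \textbf{derive this corollary directly from Lemma \ref{lem_ZFC_GlobalSCC}} by choosing a good witness $W$ inside $\Gamma^{\mathfrak{A}}(M)$ and then pulling down a single ordinal $\alpha \in \omega_2$ out of $W$. First I would fix a stationary $T \subseteq \omega_1$ and apply Lemma \ref{lem_ZFC_GlobalSCC} to obtain stationarily many $M \in [H_\theta]^\omega$ with $M \cap \omega_1 \in T$ for which $\Gamma^{\mathfrak{A}}(M)$ is stationary in $[H_\theta]^{\omega_1}$; since $T$ was an arbitrary stationary subset of $\omega_1$, this already gives \emph{projective stationarily} many such $M$, which matches the conclusion we want. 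So it suffices to show that each such $M$ admits an $\alpha \in \omega_2 \setminus M$ with $M^{\mathfrak{A}}(\alpha) \cap \omega_1 = M \cap \omega_1$.

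Fix such an $M$ and pick any $W \in \Gamma^{\mathfrak{A}}(M)$, so $\omega_1 \subset W \prec \mathfrak{A}$, $|W| = \omega_1$, and $M^{\mathfrak{A}}(W) \cap W = M$. The key step is to produce the desired ordinal $\alpha$. Since $|W \cap \omega_2| = \omega_1 < \omega_2$, the set $W \cap \omega_2$ is a proper subset of $\omega_2$, so I would simply take $\alpha := \sup(W \cap \omega_2)$ (or any ordinal in $\omega_2 \setminus W$; either choice must be checked). The point is that $\alpha \notin M$, because $M \subseteq W$ forces $M \cap \omega_2 \subseteq W \cap \omega_2$, and $\alpha$ was chosen outside $W$ (hence outside $M$). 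It remains to verify $M^{\mathfrak{A}}(\alpha) \cap \omega_1 = M \cap \omega_1$.

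For the nontrivial inclusion $M^{\mathfrak{A}}(\alpha) \cap \omega_1 \subseteq M \cap \omega_1$, I would exploit the monotonicity of the Skolem hull together with the defining property of $W$. Since $\alpha \in W$ would trivialize things the wrong way, the cleaner route is: take $\alpha \in \omega_2 \setminus W$, observe that $M \cup \{\alpha\} \subseteq W \cup \{\alpha\}$, and note that any ordinal $\xi \in M^{\mathfrak{A}}(\alpha) \cap \omega_1$ lies in $W$ (because $\omega_1 \subset W$), so $\xi \in M^{\mathfrak{A}}(\alpha) \cap W$. The task is then to argue $M^{\mathfrak{A}}(\alpha) \cap W = M$, by the same hull-and-parameter computation used in Lemma \ref{lem_ZFC_GlobalSCC}, but now with the \emph{smaller} generator $\{\alpha\}$ in place of $W$. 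This is where the main obstacle lies: the equation $M^{\mathfrak{A}}(W) \cap W = M$ controls hulls generated by $W$ itself, not by a single $\alpha \notin W$, so the cleanest fix is to instead choose $\alpha \in W$ and run the following argument. Because $W \prec \mathfrak{A}$ and $\mathfrak{A}$ extends $(H_\theta, \in)$ with a wellorder, $W$ sees a surjection from $\omega_1$ onto $\sup(W \cap \omega_2)$, so I can realize a suitable $\alpha \in (W \cap \omega_2)\setminus M$ as $f(\xi)$ with $f \in M$ and $\xi \in W \setminus M$; then $M^{\mathfrak{A}}(\alpha) \subseteq M^{\mathfrak{A}}(W)$, giving $M^{\mathfrak{A}}(\alpha) \cap \omega_1 \subseteq M^{\mathfrak{A}}(W) \cap \omega_1 = M^{\mathfrak{A}}(W) \cap W \cap \omega_1 = M \cap \omega_1$, using $\omega_1 \subset W$ and the defining property of $\Gamma^{\mathfrak{A}}(M)$. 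The delicate point I expect to spend the most care on is guaranteeing the existence of an $\alpha \in (W \cap \omega_2) \setminus M$ at all, i.e.\ that $M \cap \omega_2 \subsetneq W \cap \omega_2$; this should follow from $|M| = \omega < \omega_1 = |W \cap \omega_2|$ together with $\omega_1 \subseteq W$, and closes the argument.
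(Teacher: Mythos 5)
There is a genuine gap here, and it is created precisely by your decision to abandon the candidate $\alpha$ you wrote down first. Your final argument picks $\alpha \in (W \cap \omega_2) \setminus M$ and asserts $M^{\mathfrak{A}}(\alpha) \subseteq M^{\mathfrak{A}}(W)$ on the grounds that $\alpha = f(\xi)$ with $f \in M$ and $\xi \in W$. That inclusion would require $\alpha \in M^{\mathfrak{A}}(W)$, and membership of $\xi$ in $W$ does not put $\xi$ (or $f(\xi)$) into the hull of $M \cup \{W\}$: that hull contains $W$ as an \emph{element}, not as a subset. Worse, the inclusion you want cannot hold for \emph{any} $\alpha \in W \setminus M$: if $\alpha \in M^{\mathfrak{A}}(W)$ and $\alpha \in W$, then $\alpha \in M^{\mathfrak{A}}(W) \cap W = M$, contradicting $\alpha \notin M$. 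So the ``cleaner route'' is self-defeating, and the worry you flag at the end (existence of some $\alpha \in (W\cap\omega_2)\setminus M$, which is trivial by cardinality) is not where the difficulty lies.

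The paper's proof is exactly your first instinct, which you talked yourself out of. First note that $W \cap \omega_2$ is an ordinal: for any $\beta \in W \cap \omega_2$, elementarity of $W$ in $(H_\theta,\in)$ yields a surjection $f:\omega_1 \to \beta$ with $f \in W$, and since $\omega_1 \subseteq W$ we get $\beta \subseteq W$; so $W \cap \omega_2$ is transitive. Set $\alpha := W \cap \omega_2$ (which equals your $\sup(W\cap\omega_2)$). Then $\alpha \in \omega_2$, $\alpha \notin W$, and $M \subseteq W$ (immediate from $M = M^{\mathfrak{A}}(W)\cap W$), so $\alpha \notin M$. The point you missed is that although $\alpha \notin W$, it \emph{is} definable in $\mathfrak{A}$ from the single parameter $W$ (as $W \cap \omega_2$), so $\alpha \in M^{\mathfrak{A}}(W)$ and hence $M^{\mathfrak{A}}(\alpha) \subseteq M^{\mathfrak{A}}(W)$. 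Since $\omega_1 \subseteq W$, this gives $M^{\mathfrak{A}}(\alpha) \cap \omega_1 \subseteq M^{\mathfrak{A}}(W) \cap W \cap \omega_1 = M \cap \omega_1$, which is the nontrivial inclusion. Your reduction of projective stationarity to Lemma \ref{lem_ZFC_GlobalSCC} via an arbitrary stationary $T \subseteq \omega_1$ is correct and matches the paper; only the extraction of $\alpha$ needs to be repaired as above.
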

\begin{proof}
By Lemma \ref{lem_ZFC_GlobalSCC}, there are projective stationarily many $M \in [H_\theta]^\omega$ such that for stationarily many $W \in [H_\theta]^{\omega_1}$, $\omega_1 \subset W$ and $M(W) \cap W = M$. Fix such an $M$ and $W$ and set $\alpha:=W \cap \omega_2$.  Then $\alpha \notin M$, and because $\alpha$ is definable from $W$ we have $M(\alpha) \subseteq M(W)$.  Since $M(W) \cap W = M$, in particular $M(W) \cap \omega_1 = M \cap \omega_1$, and hence $M(\alpha) \cap \omega_1 = M \cap \omega_1$ too. 
\end{proof}

Can we replace ``projective stationarily many" with ``club-many" in the conclusions of the previous results?  Consistently, yes; but it has large cardinal strength.  This leads us into a hierarchy of \emph{Strong Chang's Conjectures} discussed in Section \ref{sec_SCC_WRP}.

Recall from Section \ref{sec_IntroAntichainCatching} that given a maximal antichain $\mathcal{A}$ in $\wp(\omega_1)/\text{NS}_{\omega_1}$, and a countable $N \prec (H_\theta,\in)$, we say that $\boldsymbol{N}$ \textbf{catches} $\boldsymbol{\mathcal{A}}$ if there is some $S \in \mathcal{A}$ such that $S \in N$ and $N \cap \omega_1 \in S$ (again, by $S \in \mathcal{A}$ we really mean the equivalence class of $S$ is in $\mathcal{A}$).

\begin{lemma}[Feng-Jech~\cite{MR1668171}]\label{lem_ProjStatManyCatch}
Suppose $\mathcal{A}$ is a maximal antichain in $\wp(\omega_1)/\text{NS}_{\omega_1}$, and $\theta$ is a large regular cardinal.  Then there are projective-stationarily many $N \in [H_\theta]^\omega$ that catch $\mathcal{A}$.
\end{lemma}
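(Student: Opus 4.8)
The plan is to fix an arbitrary stationary $T \subseteq \omega_1$ and verify directly that $P \searrow T$ is stationary in $[H_\theta]^\omega$, where $P$ is the collection of $N$ that catch $\mathcal{A}$. By the definition of weak stationarity, this amounts to the following: given any structure $\mathfrak{B}$ on $H_\theta$ in a countable language, I must produce a single countable $N \prec \mathfrak{B}$ that catches $\mathcal{A}$ and satisfies $N \cap \omega_1 \in T$. The one idea that makes everything work is to exploit the \emph{maximality} of $\mathcal{A}$ in order to tie the catching requirement to the prescribed set $T$: since $T$ is stationary, $[T]$ is a nonzero element of $\wp(\omega_1)/\text{NS}_{\omega_1}$, so by maximality there is some $S \in \mathcal{A}$ compatible with $T$, i.e.\ with $S \cap T$ stationary in $\omega_1$. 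This particular $S$ is the antichain element that I will arrange $N$ to catch.

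Next I would expand $\mathfrak{B}$ to a Skolemized structure $\mathfrak{B}'$ by adjoining constants naming $\mathcal{A}$ and $S$, so that every $N \prec \mathfrak{B}'$ automatically has $S \in N \cap \mathcal{A}$ and, being an expansion's reduct, is also elementary in $\mathfrak{B}$. It then suffices to find one such $N$ with $N \cap \omega_1 \in S \cap T$. Indeed, for such an $N$ the element $S$ witnesses that $N$ catches $\mathcal{A}$ (as $S \in \mathcal{A} \cap N$ and $N \cap \omega_1 \in S$), while $N \cap \omega_1 \in T$ places $N$ in $P \searrow T$, exactly as required.

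To locate such an $N$ I would invoke the standard fact that the set $C := \{ \delta < \omega_1 : \text{Hull}^{\mathfrak{B}'}(\delta) \cap \omega_1 = \delta \}$ contains a club in $\omega_1$ (the limit closure points of the regressive-type map $\delta \mapsto \sup(\text{Hull}^{\mathfrak{B}'}(\delta) \cap \omega_1)$). For any $\delta \in C$ the hull $N_\delta := \text{Hull}^{\mathfrak{B}'}(\delta)$ is a countable elementary submodel of $\mathfrak{B}'$ with $N_\delta \cap \omega_1 = \delta$. Because $S \cap T$ is stationary and $C$ is club, I can choose $\delta \in C \cap (S \cap T)$; the associated $N_\delta$ then satisfies every demand.

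Finally, I should flag where the real content sits. The club of realized ordinals $N \cap \omega_1$ and the reduction to a single witness via weak stationarity are routine. The genuinely essential step---and the only place the hypothesis that $\mathcal{A}$ is a maximal antichain is used---is the appeal to maximality yielding an $S \in \mathcal{A}$ with $S \cap T$ stationary; without it there would be no reason for the caught antichain element to live on the given stationary set $T$. Note also that one cannot shortcut this with Lemma \ref{lem_McapOmega1}, since $S \cap T$ is merely stationary rather than containing a club; passing from stationarity to an actual $\delta \in S \cap T$ of the form $N \cap \omega_1$ is precisely the work done by the club $C$.
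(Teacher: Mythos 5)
Your proposal is correct and follows essentially the same route as the paper: fix a stationary $T$, use maximality of $\mathcal{A}$ to find $S \in \mathcal{A}$ with $S \cap T$ stationary, and then produce a countable $N$ containing $S$ (and the relevant structure) with $N \cap \omega_1 \in S \cap T$. The only difference is that you spell out the standard club-of-realized-ordinals argument for locating such an $N$, which the paper simply asserts by fixing such an $N$ directly.
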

\begin{proof}
Let $T$ be a stationary subset of $\omega_1$.  Since $\mathcal{A}$ is maximal, there is some $S \in \mathcal{A}$ such that $S \cap T$ is stationary.  Fix any countable $N \prec H_\theta$ with $S,T \in N$ and $N \cap \omega_1 \in S \cap T$.  Then $N$ catches $\mathcal{A}$ (as witnessed by $S$), and $N \cap \omega_1 \in T$.
\end{proof}
Lemma \ref{lem_ProjStatManyCatch}, along with an argument resembling the \ref{item_CatchOne} $\implies$ \ref{item_Sat} direction of the proof of Lemma \ref{lem_Foreman_CharSat}, can be used to show that the \emph{Strong Reflection Principle (SRP)} of \cite{MR1668171} implies that the nonstationary ideal on $\omega_1$ is saturated.  See \cite{MR1668171} for details.

\section{Chang's Conjecture and stationary set reflection}\label{sec_SCC_WRP}

\subsection{Local versions of Strong Chang's Conjecture}\label{sec_LocalSCC}

Given cardinals $\rho < \mu \le \lambda < \kappa$, we write
\[
(\kappa, \lambda) \twoheadrightarrow (\mu, \rho)
\]
to mean that for every structure $\mathfrak{A} = (\kappa,\dots)$ in a countable signature, there is an $X \prec \mathfrak{A}$ such that $|X|=\mu$ and $|X \cap \lambda|=\rho$.  We will mainly be interested in instances of the form
\[
(\mu^{++}, \mu^{+}) \twoheadrightarrow (\mu^+, \mu)
\]
where $\mu$ is an infinite regular cardinal.  For example, the classic \textbf{Chang's Conjecture}, which we'll abbreviate \textbf{CC}, is the principle
\[
(\omega_2,\omega_1) \twoheadrightarrow (\omega_1,\omega).
\]
CC is equiconsistent with an $\omega_1$-Erd\H{o}s cardinal (\cite{MR520190}), and has many combinatorial consequences such as non-existence of Kurepa trees on $\omega_1$, and that every $f: \omega_1 \to \omega_1$ is bounded on a stationary set by some \emph{canonical function}.

It is often convenient to work with more ambient set theory when dealing with Chang's Conjecture, in which case the following lemma (really a special case of Lemma \ref{lem_DontEscapeMcapW}) is useful:
\begin{lemma}[folklore; see e.g \cite{MattHandbook}]\label{lem_CharCC}
Let $\mu$ be a regular cardinal.  The following are equivalent:
\begin{itemize}
 \item $(\mu^{++}, \mu^{+}) \twoheadrightarrow (\mu^+, \mu)$
 \item For every regular $\theta \ge \mu^{++}$, the set
 \[
 \{ X \subset H_\theta \ : \  \text{otp}(X \cap \mu^{++}) = \mu^+ \text{ and } X \cap \mu^+ \in \mu^+   \}
 \] 
 is (weakly) stationary.\footnote{Recall from Section \ref{sec_Prelims}, this means that for every $F: [H_\theta]^{<\omega} \to H_\theta$, there exists an $X$ in the displayed set that is closed under $F$.}
\end{itemize}
\end{lemma}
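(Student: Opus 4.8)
The plan is to reduce both conditions to statements about elementary submodels of a fixed \emph{Skolemized} structure carrying a definable wellorder, and then exploit two ``automatic'' features of such submodels. Since weak stationarity against all structures on $H_\theta$ is the same as weak stationarity against all such structures that in addition code a wellorder $<^*$ and Skolem functions (any generating function can be absorbed into a richer one), I may assume throughout that $X \prec (H_\theta, \in, <^*, \dots)$. Two observations then do most of the work. (A) If $|X \cap \mu^+| = \mu$ and $|X \cap \mu^{++}| = \mu^+$, then $\mathrm{otp}(X \cap \mu^{++}) = \mu^+$: for each $\alpha \in X$ the $<^*$-least surjection $s_\alpha : \mu^+ \twoheadrightarrow \alpha$ lies in $X$, so $X \cap \alpha \subseteq s_\alpha[X \cap \mu^+]$ and hence $|X \cap \alpha| \le \mu$; an ordinal of size $\mu^+$ all of whose proper initial segments have size $\le \mu$ is exactly $\mu^+$. (B) For such $X$, the condition $X \cap \mu^+ \in \mu^+$ is equivalent to $\mu \subseteq X$: the same surjections show $X \cap \mu^+$ is downward closed iff $X \cap \mu = \mu$, and combined with $\mathrm{otp}(X\cap\mu^{++}) = \mu^+$ this forces $X \cap \mu^+$ to be an ordinal of size exactly $\mu$.

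For the direction (second bullet) $\Rightarrow$ (arrow), I would start from an arbitrary structure $\mathfrak A = (\mu^{++}, \dots)$ and pass to a rich structure $\mathfrak A^+$ on $H_\theta$ (here $\theta = \mu^{++}$ suffices) expanding $(H_\theta, \in, <^*)$ and carrying $\mathfrak A$ as a predicate. The assumed weak stationarity yields $X \prec \mathfrak A^+$ with $\mathrm{otp}(X \cap \mu^{++}) = \mu^+$ and $X \cap \mu^+ \in \mu^+$. Setting $Y := X \cap \mu^{++}$, the standard fact that $M \cap A \prec \mathfrak A$ whenever $\mathfrak A \in M \prec (H_\theta, \in)$ gives $Y \prec \mathfrak A$; moreover $|Y| = \mu^+$ from the order-type clause while $|Y \cap \mu^+| = \mu$ by the surjection reasoning of (A) and (B). Thus $Y$ witnesses $(\mu^{++}, \mu^+) \twoheadrightarrow (\mu^+, \mu)$.

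For (arrow) $\Rightarrow$ (second bullet), fix a rich $\mathfrak A^+$ on $H_\theta$ and form its \emph{trace} $\mathfrak A'$ on $\mu^{++}$ (each Skolem term restricted to arguments, and truncated to values, in $\mu^{++}$). A short computation shows $\mathrm{Hull}^{\mathfrak A^+}(Y) \cap \mu^{++} = Y$ for every $Y \prec \mathfrak A'$, so it suffices to produce $Y \prec \mathfrak A'$ with $|Y| = \mu^+$ and $Y \cap \mu^+ \in \mu^+$ of size $\mu$, and then set $X := \mathrm{Hull}^{\mathfrak A^+}(Y)$. The arrow hands me one $N \prec \mathfrak A'$ with $|N| = \mu^+$ and $|N \cap \mu^+| = \mu$, and by (A) this $N$ already has $\mathrm{otp}(N \cap \mu^{++}) = \mu^+$. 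When $\mu = \omega$ I am finished immediately: every elementary submodel contains all of $\omega$, so by (B) the set $N \cap \omega_1$ is already a genuine countable ordinal and $N$ itself is the desired model.

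The one remaining---and main---difficulty is the upgrade, for uncountable regular $\mu$, from $N$ to a model whose intersection with $\mu^+$ is a genuine initial segment (equivalently, by (B), one that contains $\mu$), while keeping that intersection of size $\mu$. This is exactly where naive fixes fail: passing to submodels of $N$ only shrinks the scattered set $N \cap \mu^+$, and adjoining a larger initial segment to the generators can inflate the intersection with $\mu^+$ all the way to size $\mu^+$, since Skolem terms applied to the bulk above $\mu^+$ may project down into $\mu^+$. My plan is to build the target model as the union of a $\subseteq$-increasing continuous chain $\langle Y_i : i < \mu^+ \rangle$ of elementary submodels of size $\mu$, drawing the new elements above $\mu^+$ from $N$ and invoking the arrow to keep extending, while using Lemma~\ref{lem_DontEscapeMcapW} with $W = \mu^+$ (so that for almost every candidate initial segment the relevant hull does not escape below $\mu^+$) together with Fodor's Lemma to stabilize the intersections $Y_i \cap \mu^+$ at a fixed initial segment of size $\mu$. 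Carrying out this simultaneous control of ``bulk from above'' against ``no new ordinals below $\mu^+$'' is the crux of the argument; once it is done, observation (A) supplies $\mathrm{otp}(\,\cdot\,\cap\mu^{++}) = \mu^+$ for free and the lift $X = \mathrm{Hull}^{\mathfrak A^+}(Y)$ completes the proof.
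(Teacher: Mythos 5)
The paper itself gives no proof of Lemma \ref{lem_CharCC} (it is cited as folklore, with only the parenthetical remark that it is ``really a special case of Lemma \ref{lem_DontEscapeMcapW}''), so I am comparing your proposal against the standard argument. Your observations (A) and (B) are correct, your proof of (second bullet) $\Rightarrow$ (arrow) is fine, and you have correctly isolated the one nontrivial point in the converse: for uncountable regular $\mu$, the arrow hands you an $N$ with $|N\cap\mu^+|=\mu$, but $N\cap\mu^+$ is scattered, and making it a genuine ordinal without inflating it is the whole content of the direction. The problem is that you then stop: you describe a plan (a continuous $\mu^+$-chain, Lemma \ref{lem_DontEscapeMcapW} with $W=\mu^+$, and Fodor) and explicitly defer ``the crux.'' That is a genuine gap, and the plan as described does not obviously execute. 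Fodor does not apply to $i\mapsto Y_i\cap\mu^+$ (this is not a regressive ordinal-valued map), and Lemma \ref{lem_DontEscapeMcapW} with $W=\mu^+$ only controls hulls generated by subsets of $\mu^+$ together with the (countably many) constants of the language; to feed in $\mu^+$-many generators from $N\setminus\mu^+$ you would need to intersect $\mu^+$-many of the resulting ``almost every'' sets, which the completeness of the relevant nonstationary ideal does not support. (A quick sanity check that some such obstruction must appear: if one could always close $\mu^+$-many elements above $\mu^+$ over a suitable $\delta<\mu^+$ without leaving $\delta$, Chang's Conjecture would be a theorem of ZFC.)

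The standard repair is much simpler than a chain construction and uses the smallness of $N\cap\mu^+$ exactly once. Work with the trace structure $\mathfrak A'$ on $\mu^{++}$ of a Skolemized $\mathfrak A^+=(H_\theta,\in,<^*,F,\dots)$, but first enrich $\mathfrak A^+$ (hence $\mathfrak A'$) by, for each Skolem term $\tau$ and each way of splitting its arguments, the function
\[
\sigma_\tau(\vec y)\ :=\ \sup\bigl(\{\tau(\vec y,\vec\xi)\ :\ \vec\xi\in[\mu]^{<\omega}\}\cap\mu^+\bigr),
\]
which is an ordinal below $\mu^+$ because $|[\mu]^{<\omega}|=\mu<\operatorname{cf}(\mu^+)$ (iterate $\omega$ times to keep the language countable). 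Apply the arrow to this richer trace to get $N$ with $|N|=\mu^+$ and $|N\cap\mu^+|=\mu$, and set $Y:=\operatorname{Hull}^{\mathfrak A'}(N\cup\mu)$. Every element of $Y\cap\mu^+$ has the form $\tau(\vec y,\vec\xi)$ with $\vec y\in N$ and $\vec\xi\in[\mu]^{<\omega}$, hence is $\le\sigma_\tau(\vec y)\in N\cap\mu^+$; so $Y\cap\mu^+\subseteq\sup(N\cap\mu^+)+1$ and $|Y\cap\mu^+|=\mu$. Since $\mu\subseteq Y$ and $Y$ is closed under the canonical surjections $\mu\twoheadrightarrow\beta$ for $\beta\in Y\cap\mu^+$, the set $Y\cap\mu^+$ is transitive, i.e.\ an ordinal in $[\mu,\mu^+)$. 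Your observation (A) then gives $\operatorname{otp}(Y\cap\mu^{++})=\mu^+$, and lifting via $X:=\operatorname{Hull}^{\mathfrak A^+}(Y)$ with $X\cap\mu^{++}=Y$ (your trace computation, which is where Lemma \ref{lem_DontEscapeMcapW} really enters) finishes the proof. I would replace your final paragraph with this argument; the rest of your write-up can stand.
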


In order to resolve a question of Baumgartner-Taylor~\cite{MR654852} about ``c.c.c.-indestructible saturation", Foreman-Magidor-Shelah~\cite{MR924672} introduced a stronger form of CC, which we will call \textbf{Projective CC}:
\begin{definition}
\textbf{Projective CC} asserts that ``Chang structures" are projective over $\omega_1$; i.e.\ for every stationary $T \subseteq \omega_1$, the set
\[
\{ X  \subset \omega_2 \ : \  \text{otp}(X \cap \omega_2) = \omega_1 \text{ and } X \cap \omega_1 \in T \}
\]
is (weakly) stationary.  
\end{definition}

Projective CC has a characterization analogous to the characterization of CC in Lemma \ref{lem_CharCC}.  Section \ref{sec_ProjCC_NS} will review some results of Foreman-Magidor-Shelah~\cite{MR924672} and P.\ Larson, showing that Projective CC amplifies the saturation properties (if any exist) of the nonstationary ideal on $\omega_1$.

Other strong variants of CC have appeared in the literature, with inconsistent terminology and notation (see Table 1 in \cite{Cox_Nonreasonable} for a comparison).  We introduce several forms of ``Strong" CC.  In order for this to be applicable to the Foreman-Magidor results in Section \ref{sec_Higher}, we state them in a general form which make sense at higher cardinals.  In what follows, 
\[
\wp^*_\mu(H):= \{ W \subset H \ : \ |W|<\mu \text{ and } W \cap \mu \in \mu \}.
\]
For $\mu = \omega_1$, $\wp^*_\mu(H)$ is essentially the same (mod NS) as what is usually denoted $\wp_{\omega_1}(H)$, but for $\mu \ge \omega_2$ they can consistently differ; the point is that the set $\wp^*_\mu(H)$ does \textbf{not} include ``Chang-type" subsets of $H$.  For example, in the case $\mu = \omega_2$, $\wp^*_{\omega_2}(H)$ does \emph{not} include those $W \subset H$ such that $|W|=\omega_1$ but $|W \cap \omega_1| = \omega$.   One reason for using $\wp^*_\mu(H)$ instead of $\wp_\mu(H)$ on some occasions is that the notions of weak and strong stationarity coincide for subsets of $\wp^*_\mu(H)$ (though not necessarily for subsets of $\wp_\mu(H)$; see Feng~\cite{MR946635}).

We first define some ``local" versions of Strong Chang's Conjecture.

\begin{definition}[local versions of Strong Chang's Conjecture]\label{def_SCC_localversions}
Let $\mu$ be a regular uncountable cardinal.  We define the principles $\text{SCC}(\mu)$, $\text{SCC}^{\text{cof}}(\mu)$, $\text{SCC}^{\text{cof}}_{\text{gap}}(\mu)$, and $\text{SCC}^{\text{split}}(\mu)$ in parallel.  
They assert (respectively) that for all sufficiently large regular $\theta$ and all wellorders $\Delta$ on $H_\theta$ and all $M \prec (H_\theta,\in,\Delta)$ such that $M \in \wp^*_\mu(H_\theta)$:  letting
\[
\text{End}_\mu(M):= \{ M' \prec (H_\theta,\in,\Delta) \ : \  M' \in \wp^*_\mu(H_\theta), \ M \subseteq M', \ \text{ and } M \cap \mu = M' \cap \mu \}
\]
we have:

\begin{itemize}
 \item $\textbf{SCC}\boldsymbol{(\mu)}$:  there exists an $M' \in \text{End}_\mu(M)$ such that $(M' \setminus M) \ \cap \mu^+ \ne \emptyset$.
 \item $\textbf{SCC}^{\textbf{cof}}\boldsymbol{(\mu)}$: there are cofinally many $\gamma < \mu^+$ such that there exists an $M' \in \text{End}_\mu(M)$ such that $\gamma \le \text{sup}(M' \cap \mu^+)$.
 \item $\textbf{SCC}^{\textbf{cof}}_{\textbf{gap}}\boldsymbol{(\mu)}$: there are cofinally many $\gamma < \mu^+$ such that there exists an $M' \in \text{End}_\mu(M)$ such that $\gamma \le \text{sup}(M' \cap \mu^+)$, \emph{and} $M' \cap \gamma = M \cap \gamma$.
 \item $\textbf{SCC}^{\textbf{split}}\boldsymbol{(\mu)}$: there exist $M_0$, $M_1$ in $\text{End}_\mu(M)$ such that $M_0 \cap \mu^+$ and $M_1 \cap \mu^+$ are $\subseteq$-incomparable (i.e., neither is a subset of the other).
\end{itemize}

\textbf{Convention:} If the $\mu$ is not specified, it is intended to be $\omega_1$; e.g., SCC means $\text{SCC}(\omega_1)$.
\end{definition}

For example, in the case $\mu = \omega_1$, SCC (i.e.\ $\text{SCC}(\omega_1)$) asserts that for all large regular $\theta$ and all wellorders $\Delta$ on $H_\theta$ and all countable $M \prec (H_\theta,\in,\Delta)$, there is an $M' \prec (H_\theta,\in,\Delta)$ such that $M \subset M'$, $M \cap \omega_1 = M' \cap \omega_1$, but $M'$ includes some ordinal in $\omega_2 \setminus M$.  By the discussion in the introduction, such an ordinal is necessarily in the interval $\big[\text{sup}(M \cap \omega_2), \omega_2 \big)$.

For $\mu = \omega_1$, all of the variants in Definition \ref{def_SCC_localversions} are consistent relative to a measurable cardinal.\footnote{Cox~\cite{Cox_Nonreasonable} proves that if there is a normal ideal on $\omega_2$ whose quotient forcing is proper---as is the case in $V^{\text{Col}(\omega_1, < \kappa)}$ when $\kappa$ is measurable in $V$ (see \cite{MR560220})---then $\text{SCC}^{\text{cof}}_{\text{gap}}$ holds. }  For $\mu \ge \omega_2$, they all turn out to be inconsistent, though the (inconsistent) principle $\text{SCC}(\omega_2)$ turns out to be a useful intermediary in other inconsistency proofs (this is due to Foreman-Magidor~\cite{MR1359154}; see Section \ref{sec_Higher}).

The following lemma provides a useful characterization of the principle $\text{SCC}(\mu)$, by basically allowing one to turn a single counterexample into stationarily many.  We omit the proof, and refer the reader to the proof of Lemma 13 of \cite{Cox_Nonreasonable}. 
\begin{lemma}\label{lem_StatManyCounterexamples}
For a regular $\mu$, $\text{SCC}(\mu)$ is equivalent to the assertion that for all but nonstationarily many $M \in \wp^*_\mu(H_{\mu^{++}})$, there is an $M' \prec (H_{\mu^{++}}, \in)$ such that $M \subset M'$, $M \cap \mu = M' \cap \mu$, and $(M' \setminus M) \cap \mu^+ \ne \emptyset$.
\end{lemma}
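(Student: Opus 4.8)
The plan is to prove both implications by first reducing the whole question from the ambient $H_\theta$ of $\text{SCC}(\mu)$ down to $H_{\mu^{++}}$, and only then comparing ``good for every relevant $M$'' with ``good for all but nonstationarily many $M$.'' Fix a large regular $\theta$ (large enough that $\text{SCC}(\mu)$ speaks about it), a wellorder $\Delta$ of $H_\theta$, and $M \prec (H_\theta,\in,\Delta)$ with $M \in \wp^*_\mu(H_\theta)$; write $\Delta' := \Delta \restriction H_{\mu^{++}}$ and $\bar M := M \cap H_{\mu^{++}}$, so $\bar M \prec (H_{\mu^{++}},\in,\Delta')$. The technical heart is a \emph{coincidence-and-locality} lemma: the existence of some $M' \in \text{End}_\mu(M)$ with $(M' \setminus M)\cap\mu^+ \ne \emptyset$ (the condition at $\theta$) is equivalent to the existence of $M'' \prec (H_{\mu^{++}},\in)$ with $\bar M \subseteq M''$, $M'' \cap \mu = M \cap \mu$, and $(M''\setminus \bar M)\cap\mu^+\ne\emptyset$ (the condition at $\mu^{++}$, with no wellorder). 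The engine is the observation that for a single $\Delta$-Skolem term $h$ and parameter $\bar m$, the map $\beta \mapsto h(\bar m,\beta)$ lies in $M$; truncating its range to $\mu$ yields a function $g:\mu^+ \to \mu$ that actually lies in $\bar M$, so any new element of $\mu$ created by adjoining some $\alpha<\mu^+$ at level $\theta$ is already $g(\alpha)$, computed from $\bar M \cup \{\alpha\}$ at level $\mu^{++}$. This simultaneously shows that $\Delta$-hulls and weak elementary extensions see the same new ordinals, and that the question descends from $\theta$ to $\mu^{++}$.

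Granting this reduction, the implication $\text{SCC}(\mu) \Rightarrow$ (the displayed assertion) is the short direction. By Lemma \ref{lem_DontEscapeMcapW} applied with $W = H_{\mu^{++}}$ (and the structure $(H_\theta,\in,\Delta)$ with $H_{\mu^{++}}$ as a predicate), the set of \emph{non-escaping} $\bar M \prec (H_{\mu^{++}},\in,\Delta')$, i.e.\ those with $\text{Hull}^{(H_\theta,\in,\Delta)}(\bar M \cup \{H_{\mu^{++}}\}) \cap H_{\mu^{++}} = \bar M$, contains a club. For such a $\bar M$ I would set $M := \text{Hull}^{(H_\theta,\in,\Delta)}(\bar M \cup \{H_{\mu^{++}}\})$, observe $M \cap H_{\mu^{++}} = \bar M$ and $M \in \wp^*_\mu(H_\theta)$, apply $\text{SCC}(\mu)$ at $\theta$ to obtain a good $M' \in \text{End}_\mu(M)$, and project: $M' \cap H_{\mu^{++}} \prec (H_{\mu^{++}},\in)$ is then a witness at $\mu^{++}$ for $\bar M$. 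Since the non-escaping $\bar M$ are co-nonstationarily many, all but nonstationarily many members of $\wp^*_\mu(H_{\mu^{++}})$ are good.

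For the converse I would argue contrapositively, converting one counterexample into stationarily many. If $\text{SCC}(\mu)$ fails, the locality half of the coincidence lemma furnishes a counterexample already at $\mu^{++}$: an $M_0 \prec (H_{\mu^{++}},\in,\Delta')$ such that no $M' \prec (H_{\mu^{++}},\in)$ end-extends $M_0$ while adding an ordinal below $\mu^+$. The decisive structural feature is \emph{monotonicity}: if $M \supseteq M_0$ and $M \cap \mu = M_0 \cap \mu$, then $M$ is again a counterexample, since any $M'$ witnessing goodness of $M$ (for some $\alpha \in \mu^+ \setminus M \subseteq \mu^+\setminus M_0$) automatically satisfies $M_0 \cup \{\alpha\} \subseteq M'$ and $M' \cap \mu = M_0 \cap \mu$, hence witnesses goodness of $M_0$. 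Consequently it suffices to produce, below an arbitrary Skolemized structure $\mathfrak{C}$ on $H_{\mu^{++}}$, some $M \prec \mathfrak{C}$ with $M \supseteq M_0$ and $M \cap \mu = M_0 \cap \mu$; such an $M$ is bad, so the bad set meets every club and is stationary.

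The main obstacle is precisely this last realization step. Adjoining all of $M_0$ and then closing under $\mathfrak{C}$ threatens to \emph{enlarge} the trace $M \cap \mu$, because $\mathfrak{C}$ may map a parameter of $M_0$ into the interval $(\,M_0\cap\mu,\ \mu\,)$, which is exactly what monotonicity forbids. I expect to handle this with Lemma \ref{lem_DontEscapeMcapW} used now with $W = \mu$: working inside a structure carrying $M_0$ (or a code for it) as a predicate, one seeks a small elementary submodel whose intersection with $\mu$ is \emph{exactly} $M_0 \cap \mu$ yet which still contains $M_0$, forcing the would-be new ordinals below $\mu$ back into $M_0 \cap \mu$ via the non-escaping clause. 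Pinning down this simultaneous control---``contains $M_0$'' together with ``does not grow below $\mu$''---is the delicate point, and is carried out in the proof of Lemma 13 of \cite{Cox_Nonreasonable}. Once it is in hand, monotonicity upgrades the lone counterexample to a stationary set of them, completing the contrapositive and hence the equivalence.
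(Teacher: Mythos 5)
The paper does not actually give a proof of this lemma (it defers entirely to Lemma 13 of \cite{Cox_Nonreasonable}), so I am judging your argument on its own terms. Your forward direction is correct: Lemma \ref{lem_DontEscapeMcapW} with $W=H_{\mu^{++}}$ shows that all but nonstationarily many $\bar M\in\wp^*_\mu(H_{\mu^{++}})$ arise as $M\cap H_{\mu^{++}}$ for some $M\prec(H_\theta,\in,\Delta)$ in $\wp^*_\mu(H_\theta)$, and your coincidence-and-locality lemma is both true and proved correctly by your ``engine'': the truncation $g:\mu^+\to\mu$ of a $\Delta$-Skolem term with parameters from $M$ lies in $M\cap H_{\mu^{++}}\subseteq M''$, so adjoining $\alpha\in (M''\setminus M)\cap\mu^+$ at level $\theta$ creates no new ordinals below $\mu$. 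The monotonicity observation is also fine.

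The gap is in the converse, and it is not a ``delicate point'' awaiting the right bookkeeping: the realization step you reduce to is false. You want, for an \emph{arbitrary} Skolemized $\mathfrak{C}$ on $H_{\mu^{++}}$, some $M\prec\mathfrak{C}$ with $M_0\subseteq M$ and $M\cap\mu=M_0\cap\mu$. But let $\mathfrak{C}$ carry a constant for the ordinal $\delta_0:=M_0\cap\mu$; then every $M\prec\mathfrak{C}$ with $M\cap\mu\in\mu$ satisfies $\delta_0\in M$ and hence $M\cap\mu>\delta_0$, so no such $M$ exists. Since your monotonicity preserves the trace on $\mu$, it can never produce a bad model meeting this club, and no use of Lemma \ref{lem_DontEscapeMcapW} with $W=\mu$ repairs that. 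The correct route replaces realization by definability: the set $B$ of bad $\bar M\in\wp^*_\mu(H_{\mu^{++}})$ is definable in $(H_\theta,\in)$ from the parameter $\mu$, hence $B\in M$ for every relevant $M\prec(H_\theta,\in,\Delta)$. If $B$ is nonstationary, then by elementarity $M$ contains some $F':[H_{\mu^{++}}]^{<\omega}\to H_{\mu^{++}}$ genuinely witnessing this; $M\cap H_{\mu^{++}}$ is closed under any such $F'\in M$, hence lies outside $B$, and your coincidence lemma lifts its goodness back to level $\theta$. This handles every $M$ at once and entirely avoids the need to turn one counterexample into stationarily many.
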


We note that SCC and CC have more similar characterizations than might first be apparent.  Let us call a set $X$ a \emph{Chang set} if $\text{otp}(X \cap \omega_2) = \omega_1$ and $X \cap \omega_1 \in \omega_1$.  Then CC holds iff (for every large $(H_\theta,\in,\Delta)$) there are \emph{stationarily} many $M \in [H_\theta]^\omega$ that can be $\sqsubseteq$-extended to a Chang elementary substructure of $(H_\theta,\in,\Delta)$; while SCC holds iff there are \emph{club} many such $M \in [H_\theta]^\omega$.

The following implications are straightforward (see Cox-Sakai~\cite{Cox_Sakai_SCC}):
\begin{align}\label{eq_SCC_implications}
\begin{gathered}
 \text{SCC}^{\text{cof}}_{\text{gap}}  \implies  \text{SCC}^{\text{cof}}  \implies  \text{SCC}^{\text{split}}  \implies  \text{SCC} \implies \\
  \text{Projective CC}  \implies  \text{CC}.
\end{gathered}
\end{align}

It is known that the implication $\text{SCC}^{\text{cof}}_{\text{gap}} \implies \text{SCC}^{\text{cof}}$ is not reversible (Cox~\cite{Cox_Nonreasonable}).  It is open whether any of the implications between $\text{SCC}^{\text{cof}}$ and SCC are reversible; it is even open whether the implication $\text{SCC}^{\text{cof}} \implies \text{SCC}$ is reversible.  Those questions are related to Conjecture \ref{conj_SCCcof_SCCsplit} below.    

Regarding the remaining implications from \eqref{eq_SCC_implications}, Todorcevic~\cite{MR1261218} observed that SCC implies that every stationary subset of $[\omega_2]^\omega$ reflects to an ordinal in the interval $(\omega_1,\omega_2)$.  Such a reflection property fails after adding a Cohen real $\sigma$, because Gitik~\cite{MR820120} proved that $S:=V[\sigma] \setminus V$ is stationary in $[\omega_2]^\omega$ in $V[\sigma]$.  And $S$ cannot reflect to any ordinal $\gamma \in (\omega_1, \omega_2)$, because $V \cap [\gamma]^\omega$ contains a club (just fix any $\omega_1$-length filtration of $\gamma$ in $V$).  In short, SCC fails after adding a Cohen real.  The following lemma (a slight extension of the well-known theorem that CC is preserved by c.c.c.\ forcing) shows that, on the other hand, Projective CC is preserved by such forcing:
\begin{lemma}
Projective CC is preserved by c.c.c.\ forcing.
\end{lemma}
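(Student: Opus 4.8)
The plan is to reduce everything, via the c.c.c., to a single application of Projective CC in the ground model, exploiting that the relevant data lives below $\omega_2$ and is therefore absolute. So let $\mathbb{Q}$ be a c.c.c.\ poset, let $G$ be $V$-generic, assume Projective CC holds in $V$, and work to verify it in $V[G]$. Fix there a stationary $T \subseteq \omega_1$ and a function $F : [\omega_2]^{<\omega} \to \omega_2$; the goal is to produce an $X \subseteq \omega_2$ with $\mathrm{otp}(X \cap \omega_2) = \omega_1$, $X \cap \omega_1 \in T$, and $X$ closed under $F$. Recall that $\omega_1$ and $\omega_2$ are preserved, and that $[\omega_2]^{<\omega}$ together with its elements is the same set in $V$ and in $V[G]$.

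First I would cover $F$ from the ground model. Let $\dot F$ name $F$, and for each $\vec a \in [\omega_2]^{<\omega}$ define, in $V$,
\[
F^*(\vec a) := \{ \beta < \omega_2 : \exists q \in \mathbb{Q}\ \ q \Vdash \dot F(\check{\vec a}) = \check\beta \}.
\]
By the c.c.c.\ each $F^*(\vec a)$ is countable, and every condition forces $\dot F(\vec a)$ to lie in $F^*(\vec a)$. Enumerating each nonempty $F^*(\vec a)$ by countably many functions $g_n : [\omega_2]^{<\omega} \to \omega_2$, I package $F^*$ as a structure $\mathfrak A^* = (\omega_2, <, (g_n)_{n<\omega})$ in $V$ in a countable language, arranged so that for $X \subseteq \omega_2$ one has $X \prec \mathfrak A^*$ exactly when $F^*(\vec a) \subseteq X$ for every $\vec a \in [X]^{<\omega}$.

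Next I would apply Projective CC to $\mathfrak A^*$ in $V$. Consider the set of traces
\[
\mathcal C := \{ \delta < \omega_1 : \exists X \prec \mathfrak A^*\ \ \mathrm{otp}(X \cap \omega_2) = \omega_1 \text{ and } X \cap \omega_1 = \delta \}
\]
realized by Chang elementary substructures of $\mathfrak A^*$. Projective CC asserts exactly that for every stationary $T' \subseteq \omega_1$ there is such an $X$ with $X \cap \omega_1 \in T'$; equivalently, $\mathcal C$ meets every stationary subset of $\omega_1$. Hence $\omega_1 \setminus \mathcal C$ is nonstationary, so $\mathcal C$ contains a club $D \in V$.

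Finally, back in $V[G]$, the $V$-club $D$ is still a club of $\omega_1$ (since $\omega_1$ and ground-model clubs are preserved), so the stationary set $T$ meets it; choose $\delta \in T \cap D$. Since $\delta \in \mathcal C$, fix $X \in V$ with $X \prec \mathfrak A^*$, $\mathrm{otp}(X \cap \omega_2) = \omega_1$, and $X \cap \omega_1 = \delta$. This $X$ is as desired: being a Chang set is absolute (it concerns only ordinals below $\omega_2$), its trace $\delta$ lies in $T$, and $X$ is closed under $F$ in $V[G]$ because $F(\vec a) \in F^*(\vec a) \subseteq X$ for every $\vec a \in [X]^{<\omega}$. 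As $T$ was an arbitrary stationary subset of $\omega_1$ in $V[G]$, Projective CC holds there. The step I would watch most carefully---and which is the whole point of the argument---is that the club $D$ of good traces is extracted in $V$ \emph{before} $T$ is named, so one never needs $T$ to contain a ground-model stationary set; this is essential, since for typical c.c.c.\ forcings adding a generic subset of $\omega_1$ the new stationary set $T$ contains no ground-model stationary set at all.
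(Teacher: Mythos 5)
Your proof is correct, but it takes a genuinely different route from the one in the paper. The paper argues with names throughout: it first pulls the new stationary set back to the ground model by setting $S := \{\alpha < \omega_1 : \exists p(\alpha) \le p \ \ p(\alpha) \Vdash \check\alpha \in \dot T\}$ (stationary since $\omega_1$ is preserved), applies Projective CC in $V$ to get a Chang model $X \prec (H_\theta,\in,\mathbb{P},p,\dot T,\dot F)$ with $X \cap \omega_1 = \alpha_X \in S$, and then forces below $p(\alpha_X)$, using that $1_{\mathbb{P}}$ is a master condition for $X$ (so $X[G] \cap V = X$) to conclude that $X[G]$ is the desired Chang set closed under $F$ with trace in $T$. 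You instead use the c.c.c.\ in a different place: to cover $\dot F$ by a ground-model multivalued function $F^*$ with countable values, and you reformulate Projective CC as saying that the set of traces of Chang sets closed under any fixed ground-model function contains a club $D$; since $D$ survives into $V[G]$ and $T$ is stationary there, $T \cap D \ne \emptyset$ and a \emph{ground-model} witness $X$ does the job. Each approach has its virtues: the paper's density argument is the natural adaptation of the classical proof that CC is c.c.c.-indestructible and exhibits the master-condition mechanism that recurs elsewhere in the subject, while yours avoids names for $T$ entirely, produces a witness lying in $V$, and isolates the pleasant observation that Projective CC is equivalent to the statement that, for each $F:[\omega_2]^{<\omega}\to\omega_2$, club-many $\delta < \omega_1$ are traces of Chang sets closed under $F$. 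One purely cosmetic remark: you should fix the name $\dot F$ and work below a condition forcing it to be a total function before defining $F^*$, and "closed under the $g_n$" is what you actually need rather than literal elementarity in $\mathfrak A^*$; neither point affects the argument.
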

\begin{proof}
Suppose $\mathbb{P}$ is c.c.c., $\dot{F}$ is a $\mathbb{P}$-name for a function from $[\omega_2]^{<\omega} \to \omega_2$, and $\dot{T}$ is a $\mathbb{P}$-name for a stationary subset of $\omega_1$.  Let $p$ be a condition.  Since $\mathbb{P}$ preserves $\omega_1$, there are stationarily many $\alpha < \omega_1$ such that some condition $p(\alpha)$ below $p$ forces $\check{\alpha} \in \dot{T}$.  Let $S$ denote this stationary set; by Projective CC there is an $X \prec (H_\theta,\in, \mathbb{P},p,\dot{T},\dot{F})$ such that $\alpha_X:=X \cap \omega_1 \in S$.  Let $G$ be generic with $p(\alpha_X) \in G$.  Then $\alpha_X \in T:= \dot{T}_G$ and $X[G]$ is closed under $F:=\dot{F}_G$.  Since $\mathbb{P}$ was c.c.c., $1_{\mathbb{P}}$ is a master condition for every elementary submodel (countable or otherwise), in particular for $X$. So $X[G] \cap V = X$.  So $|X[G] \cap \omega_2|=\omega_1$ and $X[G] \cap \omega_1 = X \cap \omega_1 = \alpha_X \in T$.
\end{proof}

So the implication from SCC to Projective CC is not reversible, because the latter is preserved by adding a Cohen real but the former is not.  Finally, Projective CC is known to have strictly higher consistency strength than CC (see Sharpe-Welch~\cite{MR2817562}).

The reversibility of the remaining implications in \eqref{eq_SCC_implications} are all open, but the following theorems may be relevant.  Shelah proved an interesting characterization of $\text{SCC}^{\text{cof}}$:
\begin{theorem}[Shelah]\label{thm_Shelah_SCCcof}
The following are equivalent:
\begin{enumerate}
 \item $\text{SCC}^{\text{cof}}$.
 \item Namba forcing is semiproper.
 \item There exists some semiproper poset that forces $\text{cf}(\omega_2^V) = \omega$.
\end{enumerate}
\end{theorem}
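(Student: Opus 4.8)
The plan is to prove the cycle $(2)\Rightarrow(3)\Rightarrow(1)\Rightarrow(2)$, writing $\text{Nm}$ for Namba forcing (conditions are trees $T\subseteq\omega_2^{<\omega}$ with a stem, every node above the stem having $\omega_2$-many immediate successors). The implication $(2)\Rightarrow(3)$ is immediate: $\text{Nm}$ always adjoins a cofinal branch $\omega\to\omega_2^V$, hence forces $\text{cf}(\omega_2^V)=\omega$, and any semiproper poset preserves $\omega_1$; so if $\text{Nm}$ is semiproper it is itself a witness for $(3)$.

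For $(3)\Rightarrow(1)$, assume $\mathbb P$ is semiproper and forces $\text{cf}(\omega_2^V)=\omega$, and fix a $\mathbb P$-name $\dot f$ for a cofinal map $\omega\to\omega_2^V$. Fix a large regular $\theta$ with $\mathbb P,\dot f\in H_\theta$ and a countable $M\prec(H_\theta,\in,\Delta)$; by a routine reduction in the spirit of Lemma~\ref{lem_StatManyCounterexamples} (the $M$ with $\mathbb P,\dot f\in M$ form a club, and it suffices to verify the conclusion on a club) we may assume $\mathbb P,\dot f\in M$. Given $\gamma<\omega_2$, semiproperness yields an $(M,\mathbb P)$-semigeneric $q\le 1_{\mathbb P}$, and since $\dot f$ is forced cofinal we may strengthen $q$ — preserving semigenericity, which is downward closed — to some $q^*\Vdash\dot f(\check n)=\check\eta$ with $n<\omega$ and $\eta\in[\gamma,\omega_2)$ (here $q^*$ and $\eta$ typically lie outside $M$). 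Build, in $V$, a filter $g\ni q^*$ generated by a descending $\omega$-sequence below $q^*$ that decides every $\mathbb P$-name in $M$. Then $M':=\{\sigma^g : \sigma\in M\}\cap H_\theta^V$ lies in $V$: it is a countable elementary submodel of $(H_\theta,\in,\Delta)$ — elementarity holds because $\Delta$ furnishes definable Skolem functions, which send names in $M$ to names in $M$ — it end-extends $M$, and $M'\cap\omega_1=M\cap\omega_1$ because $q^*$ is semigeneric, so every name in $M$ for a countable ordinal is decided into $M\cap\omega_1$. Finally the name $\dot f(\check n)\in M$ is interpreted by $g$ as $\eta$ (as $q^*\in g$), so $\eta\in M'$ and $\sup(M'\cap\omega_2)\ge\eta\ge\gamma$; as $\gamma$ was arbitrary, $\text{SCC}^{\text{cof}}$ holds.

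The hard direction is $(1)\Rightarrow(2)$. Assume $\text{SCC}^{\text{cof}}$; fix a countable $M\prec(H_\theta,\in,\Delta)$ (note $\text{Nm}$ is definable, so $\text{Nm}\in M$) and a condition $T\in M\cap\text{Nm}$; we seek an $(M,\text{Nm})$-semigeneric $T^*\le T$, i.e. one forcing $M(\dot G)\cap\omega_1=M\cap\omega_1$. The idea is a fusion building $T^*\le T$ while labelling each node $t$ by a model $M_t\in\text{End}_{\omega_1}(M)$, $\subseteq$-increasing along branches, so that along every cofinal branch $b$ three things hold: $b$ is cofinal in $\omega_2$; its trace $g_b:=\{S\in\text{Nm}\cap M : b\text{ is a branch of }S\}$ meets every dense subset of $\text{Nm}$ lying in $M$; and every name $\dot\alpha\in M$ for a countable ordinal is decided by $g_b$ into $M\cap\omega_1$. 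Granting this and applying it to the true generic branch $b_G$ gives $M(\dot G)\cap\omega_1=M\cap\omega_1$. The climbing is driven by $\text{SCC}^{\text{cof}}$: at a splitting node $t$ with label $M_t$, apply $\text{SCC}^{\text{cof}}$ to $M_t$ to obtain, for cofinally many $\gamma<\omega_2$, a model $M'_\gamma\in\text{End}_{\omega_1}(M_t)$ with $\sup(M'_\gamma\cap\omega_2)\ge\gamma$; pick $\xi_\gamma\in M'_\gamma\cap\omega_2$ above $\sup(M_t\cap\omega_2)$, use the (unbounded, hence $\omega_2$-many) $\xi_\gamma$ as the immediate successors of $t$, and label $t^\frown\langle\xi_\gamma\rangle$ by $M'_\gamma$. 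Transitivity of $\sqsubseteq$ keeps every label in $\text{End}_{\omega_1}(M)$, and along a branch the labels union to a model $M_b\in\text{End}_{\omega_1}(M)$ with $\text{ran}(b)\subseteq M_b$; it is these end-extension models, whose countable-ordinal content never exceeds $M\cap\omega_1$, that let one arrange the name-decisions of $g_b$ to land inside $M\cap\omega_1$.

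The main obstacle is carrying out this fusion for Namba forcing, which, unlike proper tree forcings, admits no finite-support fusion. One must simultaneously (i) keep $\omega_2$-splitting at cofinally many levels, so that $T^*$ is a genuine condition with cofinal branches, (ii) steer every branch through each of the countably many dense sets $D_k\in M$, for $M$-genericity, and (iii) decide each countable-ordinal name $\dot\alpha_k\in M$ along every branch into $M\cap\omega_1$, for semigenericity — where (iii) is delicate, since merely having $\text{ran}(b)\subseteq M_b$ does not by itself prevent the branch from coding a new countable ordinal. Reconciling the thinning demanded by (ii) and (iii) with the $\omega_2$-fatness demanded by (i) is the combinatorial heart of the argument, and it is exactly here that the \emph{cofinal} reach of $\text{SCC}^{\text{cof}}$ — rather than the single-step extension of plain SCC — is indispensable, both to keep every node's successors unbounded in $\omega_2$ and to supply, at each node, an end-extension rich enough to absorb the pending name-decisions without adding new countable ordinals.
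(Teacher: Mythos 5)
Your cycle $(2)\Rightarrow(3)\Rightarrow(1)\Rightarrow(2)$ is the right decomposition, and two of its three legs are sound. The implication $(2)\Rightarrow(3)$ is immediate, as you say. For $(3)\Rightarrow(1)$, your hand-built filter $g$ can be made to work, but it carries avoidable technicalities: for a non-generic $g$ the recursive interpretation $\sigma^g$ need not agree with the value that conditions in $g$ force $\sigma$ to take, so the elementarity of $\{\sigma^g:\sigma\in M\}$ needs more care than the one-line appeal to Skolem functions. A cleaner route is to pass to an actual $V$-generic $G\ni q^*$: in $V[G]$ semigenericity gives $M[G]\cap\omega_1=M\cap\omega_1$ and $\eta=\dot f_G(n)\in M[G]$, so $N:=\text{Hull}^{(H_\theta^V,\in,\Delta)}(M\cup\{\eta\})$ satisfies $M\subseteq N\subseteq M[G]$, hence $N\cap\omega_1=M\cap\omega_1$ and $\sup(N\cap\omega_2)\ge\eta\ge\gamma$; and $N$ is literally an element of $V$, being the Skolem hull of the set $M\cup\{\eta\}\in V$ computed from a structure in $V$. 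This produces the required member of $\text{End}_{\omega_1}(M)$ with no filter construction at all.

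The genuine gap is $(1)\Rightarrow(2)$, which is the hard direction and the real content of the theorem, and your text stops exactly where that proof has to begin. You correctly set up the fusion with node labels $M_t\in\text{End}_{\omega_1}(M)$ and correctly isolate the three competing demands ($\omega_2$-splitting, meeting the dense sets of $M$, and deciding each name $\dot\alpha\in M$ for a countable ordinal into $M\cap\omega_1$), but you then declare their reconciliation to be ``the combinatorial heart of the argument'' without supplying it. The missing step is precisely where $\text{SCC}^{\text{cof}}$ does nontrivial work: at a node $t$ with a pending name $\dot\alpha$, one must produce $\omega_2$-many $\xi$ such that some condition whose stem extends $t^\frown\langle\xi\rangle$ decides $\dot\alpha$ to be an ordinal \emph{below} $M\cap\omega_1$ --- the point being that if both the condition and the decision are found inside an end-extension $M'\in\text{End}_{\omega_1}(M_t)$ reaching above a prescribed $\gamma<\omega_2$, then the decided value lies in $M'\cap\omega_1=M\cap\omega_1$ while the new stem coordinate can be taken unboundedly large. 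Arranging that \emph{every} branch of the resulting tree passes through such a decision for every $\dot\alpha\in M$, while every node retains $\omega_2$-many successors, is a genuine argument and not a bookkeeping remark. Note that the survey itself does not prove this direction either; it defers to Chapter XII of Shelah~\cite{MR1623206} and to Section 3 of Doebler~\cite{MR3065118}, which is where the construction you are gesturing at is actually carried out. As written, your proposal establishes $(2)\Rightarrow(3)\Rightarrow(1)$ but not $(1)\Rightarrow(2)$.
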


Most of the implications of Theorem \ref{thm_Shelah_SCCcof} are proven in Chapter XII of Shelah~\cite{MR1623206}; for the proof that $\text{SCC}^{\text{cof}}$ implies semiproperness of Namba forcing, see Section 3 of Doebler~\cite{MR3065118}.

Cox and Sakai proved a characterization of $\text{SCC}^{\text{split}}$ that closely mimics Shelah's Theorem \ref{thm_Shelah_SCCcof}:
\begin{theorem}[Cox-Sakai~\cite{Cox_Sakai_SCC}]\label{thm_CoxSakai}
The following are equivalent:
\begin{enumerate}
 \item $\text{SCC}^{\text{split}}$
 \item The poset that adds a Cohen real, then shoots a club through $([\omega_2]^\omega) \setminus V$ with countable conditions, is semiproper.
 \item There exists some semiproper poset that forces $([\omega_2]^\omega)^V$ to be nonstationary.
\end{enumerate}
\end{theorem}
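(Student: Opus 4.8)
The plan is to prove the cycle $(2)\Rightarrow(3)\Rightarrow(1)\Rightarrow(2)$. The implication $(2)\Rightarrow(3)$ is essentially free: the poset named in $(2)$ adjoins a club of $[\omega_2]^\omega$-sets disjoint from $V$, so it forces $([\omega_2]^\omega)^V$ to be nonstationary; if it is moreover semiproper, it is exactly the witness demanded by $(3)$. The Cohen real in the first step is not cosmetic here: by Gitik's theorem~\cite{MR820120} it is what makes $([\omega_2]^\omega)^{V[\sigma]}\setminus V$ stationary, so that the club-shooting second step shoots through a stationary set rather than through $\emptyset$.

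For $(3)\Rightarrow(1)$, fix a semiproper $\mathbb{Q}$ and a name $\dot{F}$ for a function $[\omega_2]^{<\omega}\to\omega_2$ witnessing $\Vdash_{\mathbb{Q}}([\omega_2]^\omega)^V$ is nonstationary, i.e.\ no ground-model countable subset of $\omega_2$ is closed under $\dot{F}$. By an argument as in Lemma~\ref{lem_StatManyCounterexamples} it suffices to verify the splitting conclusion for a single club-relevant $M\prec(H_\theta,\in,\Delta)$ with $\mathbb{Q},\dot{F}\in M$. Suppose it fails, so the traces $\{M'\cap\omega_2 : M'\in\text{End}_{\omega_1}(M)\}$ are linearly ordered by $\subseteq$. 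Using semiproperness pick an $(M,\mathbb{Q})$-semigeneric $q$ and force below it; in $V[G]$ the restriction $Y:=M[G]\cap H_\theta^V$ is elementary in $(H_\theta^V,\in,\Delta)$, end-extends $M$ with $Y\cap\omega_1=\delta$, and $Y\cap\omega_2$ is closed under $F=\dot{F}_G$. The crux is to reconstruct $Y\cap\omega_2$ inside $V$: for each $\alpha\in Y\cap\omega_2$ the $V$-hull $M(\alpha)$ lies in $\text{End}_{\omega_1}(M)\cap V$ with $M(\alpha)\cap\omega_2\subseteq Y\cap\omega_2$, and the assumed linear order of the $V$-traces lets me recover $Y\cap\omega_2$ as $\bigcup\{a : a=M'\cap\omega_2\in V \text{ and } \sup(a)<\sup(Y\cap\omega_2)\}$. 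This union is computed in $V$, so $Y\cap\omega_2\in V$ is a ground-model set closed under $F$, contradicting the choice of $\dot{F}$.

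For the main direction $(1)\Rightarrow(2)$, I must show $\mathbb{P}=\mathbb{C}*\dot{\mathbb{S}}$ is semiproper, where $\mathbb{C}$ is Cohen forcing and $\dot{\mathbb{S}}$ shoots a club through $([\omega_2]^\omega)\setminus V$ with countable continuous conditions. Since $\mathbb{C}$ is ccc, hence semiproper, it suffices to produce, for club-many countable $M\prec(H_\theta,\in,\Delta,\mathbb{P})$ and each condition in $M$, a semimaster condition; the Cohen coordinate is handled by its master condition $1_{\mathbb{C}}$. The remaining work is to top the $\mathbb{S}$-chain with a point $x\in([\omega_2]^\omega)^{V[\sigma]}\setminus V$ that end-extends $M\cap\omega_2$, satisfies $x\cap\omega_1=\delta$, and is a semimaster cap. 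The standard fact about club-shooting reduces this to finding an $\omega_1$-preserving elementary $N\sqsupseteq M[\sigma]$ with $N\cap\omega_2\notin V$ and setting $x=N\cap\omega_2$. This is precisely where $\text{SCC}^{\text{split}}$ enters: I would iterate its splitting conclusion in $V$ to build a tree $\langle M_t : t\in 2^{<\omega}\rangle$ of end-extensions of $M$ in which, at each node, the two children have $\subseteq$-incomparable $\omega_2$-traces, then let the Cohen real $\sigma\in 2^\omega$ select a branch and take the hull $N$ of $\bigcup_n M_{\sigma\restriction n}$ together with $M[\sigma]$.

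The main obstacle is concentrated in this last construction, and it is genuinely a tension between two demands. On one hand $N$ must stay an $\omega_1$-preserving elementary end-extension of $M[\sigma]$, so that $x=N\cap\omega_2$ validly caps the chain and forces $\omega_1$-preservation for $M[\sigma]$; this is a coherence/fusion requirement along the branch. On the other hand its trace $N\cap\omega_2$ must escape $V$; this is what the splitting buys, since for each candidate $b\in V\cap[\omega_2]^\omega$ the set of nodes forcing the partial trace to disagree with $b$ is dense (incomparability of the children always supplies a point of difference to exploit), and a single Cohen-generic $\sigma$ meets all such dense sets lying in $V$, so $N\cap\omega_2\neq b$ for every $b\in V$. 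Verifying that the fusion keeps $N\cap\omega_1=\delta$ and elementarity simultaneously with trace-genericity—and that the resulting cap is really $(M[\sigma],\mathbb{S})$-semimaster—is the technical heart of the theorem; the direction $(3)\Rightarrow(1)$ then shows that this use of splitting was not merely sufficient but necessary.
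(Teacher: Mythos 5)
The survey does not actually prove Theorem \ref{thm_CoxSakai}; it is stated with a citation to \cite{Cox_Sakai_SCC}, so I can only evaluate your argument on its own terms. Your implication $(2)\Rightarrow(3)$ is fine, and your overall strategies for the other two directions are the right ones, but each contains a genuine gap at precisely the point where the real work lives.

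In $(3)\Rightarrow(1)$, the reconstruction of $Y\cap\omega_2$ inside $V$ does not work as written. Write $A:=Y\cap\omega_2$ and $\gamma:=\sup(A)$. Your set $Z:=\bigcup\{a : a=M'\cap\omega_2\in V,\ \sup(a)<\gamma\}$ is indeed a subset of $A$ and is computable in $V$, but it can be a \emph{proper} subset: every trace contains $M\cap\omega_2$, so if $\sup(M\cap\omega_2)=\gamma$ (which happens whenever the new ordinals of $A\setminus M$ all lie below $\sup(M\cap\omega_2)$), then \emph{no} trace has supremum below $\gamma$ and $Z=\emptyset$. More fundamentally, the principle you are implicitly invoking --- that the union of a $\subseteq$-increasing chain of $V$-sets drawn from a family that is linearly ordered by $\subseteq$ \emph{in $V$} must itself lie in $V$ --- is false: a linear order in $V$ can acquire new Dedekind cuts in $V[G]$ (take $a_q=\{p\in\mathbb{Q}:p<q\}$ and a new real $r$; then $\bigcup_{q<r}a_q\notin V$), and $A$ is exactly the union over such a cut in the linearly ordered family of traces. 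So ``linearity of traces implies $A\in V$'' needs an actual argument exploiting that the traces come from elementary hulls $M(\alpha)$ with $\alpha\in M(\alpha)$, and you have not supplied one. A cleaner route to $(3)\Rightarrow(1)$ avoids reconstructing $A$ altogether: since $A\notin V$, the directed system $\{M(\bar{a})\cap\omega_2 : \bar{a}\in[A]^{<\omega}\}$ of $V$-traces cannot have a $\subseteq$-maximum, and one extracts the two incomparable members of $\text{End}_{\omega_1}(M)$ directly from the failure of this system to be generated by a single hull --- but that extraction is the content of the direction, not a corollary of linearity.

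In $(1)\Rightarrow(2)$, the density claim carrying the whole construction is unjustified, and you say so yourself. Fix $b\in V\cap[\omega_2]^\omega$. Incomparability of the two children at a node gives $\xi\in (M_{t^\frown 0}\cap\omega_2)\setminus M_{t^\frown 1}$ and $\eta\in(M_{t^\frown 1}\cap\omega_2)\setminus M_{t^\frown 0}$, but if both $\xi,\eta\in b$ then neither child commits the branch-trace to disagreeing with $b$: choosing child $1$ does not prevent $\xi$ from entering the trace at a later stage (traces only grow, so you can never force an \emph{omission}), and choosing child $0$ contributes nothing outside $b$. So ``the set of nodes forcing disagreement with $b$ is dense'' is exactly the nontrivial combinatorial content of this direction --- it is where $\text{SCC}^{\text{split}}$ must be applied with a careful choice of \emph{which} splitting to take relative to $b$ and to the bookkeeping along the branch, in the spirit of Gitik's argument from \cite{MR820120} --- and deferring it as ``the technical heart'' leaves the implication unproved. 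As it stands, your write-up is a correct road map for the theorem rather than a proof of it.
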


In light of Shelah's Theorem \ref{thm_Shelah_SCCcof} and the Cox-Sakai Theorem \ref{thm_CoxSakai}, we make the following conjecture:
\begin{conjecture}\label{conj_SCCcof_SCCsplit}
The implication $\text{SCC}^{\text{cof}} \implies \text{SCC}^{\text{split}}$ is not reversible.
\end{conjecture}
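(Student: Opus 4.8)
The plan is to prove the consistency statement $\text{Con}\big(\text{SCC}^{\text{split}} + \neg\,\text{SCC}^{\text{cof}}\big)$, which is exactly the assertion that the implication is not reversible. The natural vehicle is the pair of forcing characterizations already available: by Theorem~\ref{thm_CoxSakai}, $\text{SCC}^{\text{split}}$ is equivalent to the semiproperness of the poset $\mathbb{Q}_{\text{split}}$ that adds a Cohen real and then shoots a club through $([\omega_2]^\omega)\setminus V$ with countable conditions; by Theorem~\ref{thm_Shelah_SCCcof}, $\text{SCC}^{\text{cof}}$ is equivalent to the existence of \emph{some} semiproper poset forcing $\text{cf}(\omega_2^V)=\omega$, i.e.\ to the semiproperness of Namba forcing. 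So the target reduces to producing a model in which $\mathbb{Q}_{\text{split}}$ is semiproper while \emph{no} semiproper poset singularizes $\omega_2$. It is worth recording why these two demands are compatible in principle: if any extension $W$ has $\text{cf}^W(\omega_2^V)=\omega$, fix an increasing cofinal $\langle\delta_n:n<\omega\rangle\in W$ and let $F(s)=\delta_{|s|}$ for $s\in[\omega_2]^{<\omega}$; any $x\in([\omega_2]^\omega)^V$ closed under $F$ would be a countable cofinal subset of $\omega_2^V$ lying in $V$, contradicting regularity of $\omega_2^V$. This reproves the forward implication $\text{SCC}^{\text{cof}}\implies\text{SCC}^{\text{split}}$, and highlights that $\text{SCC}^{\text{split}}$ asks only for the strictly weaker manipulation that keeps $\omega_2$ \emph{regular}.

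First I would start from a model $V_0$ satisfying $\text{SCC}^{\text{cof}}$ (hence all the principles below it in \eqref{eq_SCC_implications}), for instance a model obtained by Levy-collapsing a measurable, or an $\text{MM}$-type model. Over $V_0$ I would force with a poset $\mathbb{R}$ engineered to \textbf{destroy} the semiproperness of Namba forcing, while \textbf{preserving} the semiproperness of $\mathbb{Q}_{\text{split}}$. The most promising candidate for $\mathbb{R}$ is one adding a ``rigidifying'' combinatorial object on $\omega_2$ that obstructs the reflection/approachability structure any semiproper singularization of $\omega_2$ must respect---a non-reflecting stationary subset of $\omega_2\cap\text{cof}(\omega_1)$, or a square-type sequence such as $\square(\omega_2)$ or $\square_{\omega_1}$, which are generally in tension with the stationary reflection that semiproper Namba-type forcing provides. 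One would then check that $\mathbb{R}$ preserves $\omega_1$ and $\omega_2$ and leaves $([\omega_2]^\omega)\setminus V$ amenable to semiproper club-shooting, so that $\text{SCC}^{\text{split}}$ survives.

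The hard part will be the two opposing preservation statements. On one side I must show that the rigidifying object added by $\mathbb{R}$ provably blocks \emph{every} semiproper singularization of $\omega_2$---a statement universally quantified over all posets, which is the genuinely difficult half and likely requires showing that the combinatorial object is a ZFC obstruction to semiproperness of Namba forcing (e.g.\ that the relevant square or non-reflection principle is inconsistent with $\text{SCC}^{\text{cof}}$). On the other side I must show that $\mathbb{R}$ nonetheless keeps $\mathbb{Q}_{\text{split}}$ semiproper, which needs a delicate preservation theorem isolating why shooting a club through $([\omega_2]^\omega)\setminus V$ remains semiproper even in the presence of the obstruction. The central obstacle is precisely that the only known routes to $\text{SCC}^{\text{split}}$---iterating semiproper forcing, or passing to $\text{MM}$/$\text{SRP}$-type models---tend to confer semiproperness on Namba forcing as well, and so simultaneously yield $\text{SCC}^{\text{cof}}$; engineering a $\text{SCC}^{\text{split}}$-producing forcing that provably does \emph{not} make Namba forcing semiproper is exactly the missing ingredient, which is why the statement is a conjecture. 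A possibly cleaner alternative route would be to isolate a purely combinatorial consequence $\Phi$ of $\text{SCC}^{\text{cof}}$ that is strictly stronger than the $[\omega_2]^\omega$-reflection furnished by $\text{SCC}^{\text{split}}$ (plausibly a reflection principle for stationary subsets of $\omega_2\cap\text{cof}(\omega)$, as opposed to the reflection to an ordinal in $(\omega_1,\omega_2)$ that $\text{SCC}$ already gives via Todorcevic's observation), and then force $\text{SCC}^{\text{split}}+\neg\Phi$ directly.
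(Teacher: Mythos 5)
There is a genuine gap here, and it is worth being explicit about its nature: the statement you are addressing is stated in the paper as an open \emph{conjecture}, and the paper offers no proof of it. Your proposal correctly identifies the natural strategy suggested by the paper itself --- exploit the parallel forcing characterizations in Theorem \ref{thm_Shelah_SCCcof} and Theorem \ref{thm_CoxSakai}, and build a model where the club-shooting poset through $([\omega_2]^\omega)\setminus V$ is semiproper but no semiproper poset singularizes $\omega_2^V$ --- and your side remark re-deriving the forward implication $\text{SCC}^{\text{cof}}\implies\text{SCC}^{\text{split}}$ from those characterizations is correct. But the two load-bearing claims of your plan are both left entirely unestablished: (1) that some combinatorial object added by $\mathbb{R}$ is a ZFC obstruction to \emph{every} semiproper singularization of $\omega_2$ (a universal quantification over all posets, for which you supply no candidate argument), and (2) that $\mathbb{R}$ simultaneously preserves semiproperness of the split poset. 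You acknowledge both gaps yourself, so what you have written is a research program, not a proof, and it should not be presented as settling the conjecture.

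One concrete reason to worry that the program as stated may not be realizable in its current form: the paper records Todorcevic's observation that $\text{SCC}$ --- and hence $\text{SCC}^{\text{split}}$, which implies it by \eqref{eq_SCC_implications} --- already forces every stationary subset of $[\omega_2]^\omega$ to reflect to an ordinal in $(\omega_1,\omega_2)$. The ``rigidifying'' objects you propose ($\square(\omega_2)$, $\square_{\omega_1}$, non-reflecting stationary sets) are precisely the sort of principles that tend to be incompatible with this kind of reflection, so an $\mathbb{R}$ strong enough to block all semiproper singularizations of $\omega_2$ is in serious danger of destroying $\text{SCC}^{\text{split}}$ along with $\text{SCC}^{\text{cof}}$. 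Any successful separation must thread this needle, and your proposal does not yet explain how. Your closing suggestion --- isolating a combinatorial consequence $\Phi$ of $\text{SCC}^{\text{cof}}$ not implied by $\text{SCC}^{\text{split}}$ --- is a reasonable alternative angle, but again no candidate $\Phi$ is produced, so the conjecture remains open after your attempt exactly as it was before it.
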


\subsection{Global versions of Strong Chang's Conjecture}

We now introduce ``global" versions of $\text{SCC}^{\text{cof}}$ and $\text{SCC}^{\text{cof}}_{\text{gap}}$, because they are (respectively) equivalent to reflection principles.  The principles \textbf{Global} $\textbf{SCC}^{\textbf{cof}}$ and \textbf{Global} $\textbf{SCC}^{\text{cof}}_{\text{gap}}$ were introduced by Doebler-Schindler~\cite{MR2576698} and Fuchino-Usuba~\cite{FuchinoUsuba}, respectively (but under different names).  Unlike Definition \ref{def_SCC_localversions} we will only need the version for $\mu = \omega_1$.  Note also the similarity of the following definition with Lemma \ref{lem_ZFC_GlobalSCC}.
\begin{definition}[``Global" versions of Strong Chang's Conjecture]\label{def_SCC_Global}
We define ``global" versions of $\text{SCC}^{\text{cof}}$ and $\text{SCC}^{\text{cof}}_{\text{gap}}$.  They assert (respectively) that for all sufficiently large regular $\theta$ and all wellorders $\Delta$ on $H_\theta$ and all countable $M \prec  \mathfrak{A}:=(H_\theta,\in,\Delta)$: \begin{itemize}
 \item $\textbf{Global } \textbf{SCC}^{\textbf{cof}}_{\text{gap}}$:  the set
\[
\Gamma^{\mathfrak{A}}(M):= \{ W \in \wp^*_{\omega_2}(H_\theta) \ :  \   M^{\mathfrak{A}}(W) \cap W = M  \}
\]
is $\subseteq$-cofinal in $\wp^*_{\omega_2}(H_\theta)$.

 \item $\textbf{Global } \textbf{SCC}^{\textbf{cof}}$:  the set
\[
\Gamma^{\mathfrak{A}}_{\sqsubseteq}(M):= \{ W \in \wp^*_{\omega_2}(H_\theta) \ :  \   M^{\mathfrak{A}}(W) \cap W \sqsupseteq M  \}
\]
is $\subseteq$-cofinal in $\wp^*_{\omega_2}(H_\theta)$.
\end{itemize}
\end{definition}

The Global versions easily imply the versions from Definition \ref{def_SCC_localversions}.  For example, if Global $\text{SCC}^{\text{cof}}_{\text{gap}}$ holds, and $M \prec (H_\theta,\in,\Delta)$ is countable, then given any $\gamma < \omega_2$ we can use the Global $\text{SCC}^{\text{cof}}_{\text{gap}}$ assumption to find a $W \in \wp^*_{\omega_2}(H_\theta)$ such that $\gamma < W \cap \omega_2$ and $M(W) \cap W = M$.  It follows that $\gamma < W \cap \omega_2 \in M(W)$, and 
\[
M(W) \cap \gamma = M(W) \cap W \cap \gamma = M \cap \gamma.
\]
Hence $M(W)$ is the end-extension of $M$ required by $\text{SCC}^{\text{cof}}_{\text{gap}}$.

Each principle in Definition \ref{def_SCC_Global} is equivalent to a kind of global \emph{stationary reflection} principle, as described in the next section.

\subsection{Relationship with Stationary reflection principles}\label{sec_RelationSCC_WRP}

The following kind of stationary set reflection (in the case $\mu = \omega_1$) was introduced by Beaudoin~\cite{MR877870} and Foreman-Magidor-Shelah~\cite{MR924672}:
\begin{definition}
For a regular uncountable cardinal $\mu$, the principle $\text{WRP}\big( \wp^*_{\mu} \big)$ asserts that for every regular $\theta \ge \mu^+$ and every stationary $S \subseteq \wp^*_\mu(\theta)$, there is an $W \in \wp^*_{\mu^+}(\theta)$ such that $S \cap \wp^*_{\mu}(W)$ is stationary.

\textbf{Convention:} The unadorned version is understood to mean the version where $\mu = \omega_1$; i.e.\ WRP means $\text{WRP}(\wp^*_{\omega_1})$.
\end{definition}

So, for example, WRP (i.e.\ $\text{WRP}(\wp^*_{\omega_1})$) means that for every regular $\theta \ge \omega_2$ and every stationary $S \subseteq [\theta]^\omega$, there is a $W \subset \theta$ such that $|W|=\omega_1 \subset W$ and $S \cap [W]^\omega$ is stationary in $[W]^\omega$.

\begin{theorem}\label{thm_WRP_implies_SCC}
Let $\mu$ be a regular uncountable cardinal.  The principle $\text{WRP}\big( \wp^*_{\mu} \big)$ implies $\text{SCC}(\mu)$.
\end{theorem}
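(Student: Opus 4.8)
The plan is to argue by contraposition. Assuming $\text{SCC}(\mu)$ fails, I will produce a stationary set of ``bad'' small models, reflect it using $\text{WRP}\big(\wp^*_\mu\big)$ to a model $W$ of size $\mu$, and then invoke Lemma \ref{lem_DontEscapeMcapW} to manufacture, inside $W$, the very end-extension whose existence was assumed to fail. Set $\theta := \mu^{++}$ and fix a wellorder $\Delta$ on $H_\theta$. Call an $M \in \wp^*_\mu(H_\theta)$ \emph{bad} if there is no $M' \prec (H_\theta,\in)$ with $M \subseteq M'$, $M \cap \mu = M' \cap \mu$, and $(M' \setminus M) \cap \mu^+ \ne \emptyset$. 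By the characterization in Lemma \ref{lem_StatManyCounterexamples}, the failure of $\text{SCC}(\mu)$ means precisely that the set $S$ of bad $M$ is stationary in $\wp^*_\mu(H_\theta)$. Since badness depends only on $M$, I may intersect $S$ with the club of $M \prec \mathfrak{A}' := (H_\theta,\in,\Delta,\mu,S)$ and assume every member of $S$ is elementary in $\mathfrak{A}'$.

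Next I reflect $S$. Applying $\text{WRP}\big(\wp^*_\mu\big)$ — in the standard formulation for stationary subsets of $\wp^*_\mu(H_\theta)$, equivalent to the displayed ordinal version by the usual coding argument — I obtain a $W \in \wp^*_{\mu^+}(H_\theta)$ for which $S \cap \wp^*_\mu(W)$ is stationary in $\wp^*_\mu(W)$. Every member of this family is elementary in $\mathfrak{A}'$, and these members cover $W$, so a routine Tarski--Vaught check (given $\vec a \in W$, intersect the club $\{M : \vec a \subseteq M\}$ with the stationary family) yields $W \prec \mathfrak{A}'$; in particular the constant $\mu$ lies in $W$. Writing $\gamma := W \cap \mu^+$, membership of $W$ in $\wp^*_{\mu^+}$ forces $\gamma$ to be an ordinal below $\mu^+$. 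Were $\gamma < \mu$, then since $\mu \in W$ and $\mu < \mu^+$ we would have $\mu \in W \cap \mu^+ = \gamma$, i.e.\ $\mu < \gamma$, contradicting $\gamma < \mu$; hence $\gamma \ge \mu$, so $\mu \subseteq W$ and $|W| = \mu$. Note finally that $\gamma \notin W$, since the set $W \cap \mu^+$ is the ordinal $\gamma$ and no ordinal is a member of itself.

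For the core of the argument, let $\mathfrak{B}$ denote $\mathfrak{A}'$ augmented by a constant naming $W$, so that $\text{Hull}^{\mathfrak{B}}(M) = M^{\mathfrak{A}}(W)$ for every $M \subseteq W$. By Lemma \ref{lem_DontEscapeMcapW} the collection of $M \in \wp(W)$ with $M^{\mathfrak{A}}(W) \cap W = M$ is co-nonstationary in $\wp(W)$; since $S \cap \wp^*_\mu(W)$ is stationary in $\wp^*_\mu(W)$ and hence in $\wp(W)$, the two meet. Fix $M$ in the intersection and put $M' := M^{\mathfrak{A}}(W)$. Then $M' \prec (H_\theta,\in)$, $M \subseteq M'$, and $M' \in \wp^*_\mu(H_\theta)$ (as $|M'| = |M| < \mu$). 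Because $\mu \subseteq W$ and $M' \cap W = M$, we get $M' \cap \mu = M' \cap W \cap \mu = M \cap \mu$. Finally, $\gamma = W \cap \mu^+$ is definable from the constant $W$, so $\gamma \in M'$, whereas $\gamma \notin W \supseteq M$; thus $\gamma \in (M' \setminus M) \cap \mu^+$. Therefore $M'$ is exactly an end-extension of the sort that $M \in S$ was supposed not to have, a contradiction that proves $\text{SCC}(\mu)$.

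The delicate point is the transition from the reflected $W$ to a genuine witness: one must secure that $W$ is elementary and contains $\mu$ as a subset, for only then does the conclusion $M' \cap W = M$ of Lemma \ref{lem_DontEscapeMcapW} sharpen to the crucial $M' \cap \mu = M \cap \mu$. The insight that makes everything fit is that the single ordinal $\gamma = W \cap \mu^+$ serves as the forbidden new ordinal: it is definable from $W$, so it enters the hull $M^{\mathfrak{A}}(W)$, yet it lies outside $W$ and therefore outside $M$, while Lemma \ref{lem_DontEscapeMcapW} guarantees that adjoining $W$ to $M$ introduces nothing new below $\mu$. The only other thing to check is the (routine, folklore) equivalence between the ordinal and $H_\theta$ formulations of $\text{WRP}\big(\wp^*_\mu\big)$.
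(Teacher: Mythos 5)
Your proof is correct and follows essentially the same route as the paper's: contrapose via Lemma \ref{lem_StatManyCounterexamples}, reflect the stationary set of counterexamples with $\text{WRP}(\wp^*_\mu)$, apply Lemma \ref{lem_DontEscapeMcapW} to find $M \in S$ with $M(W) \cap W = M$, and use the ordinal $W \cap \mu^+$ as the forbidden new element of $\mu^+$. You are in fact somewhat more careful than the paper at the points it glosses over (elementarity of $W$ and the verification that $\mu \subseteq W$), which is a virtue rather than a divergence.
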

\begin{proof}
Suppose toward a contradiction that $\text{SCC}(\mu)$ fails; then by Lemma \ref{lem_StatManyCounterexamples}, there is a stationary $S \subseteq \wp^*_{\mu}(H_{\mu^{++}})$ such that for all $M \in S$, there is no $M' \in \text{End}_\mu(M)$ (using the notation from Definition \ref{def_SCC_localversions}) such that $M'$ properly extends $M$ below $\mu^+$.

By $\text{WRP}(\wp^*_\mu)$ there is a $W \in \wp^*_{\mu^+}(H_{\mu^{++}})$ such that $S_W:=S \cap \wp^*_{\mu}(W)$ is stationary in $\wp^*_{\mu}(W)$.  Fix such a $W$ for the remainder of the proof.  Since $S_W$ is stationary in $\wp^*_{\mu}(W)$, by Lemma \ref{lem_DontEscapeMcapW} there is an $M \in S_W$ such that
\[
M(W) \cap W = M
\]
where $M(W)$ denotes the hull of $M \cup \{ W \}$ in the structure $(H_{\mu^{++}}, \in, \Delta)$ (where $\Delta$ is any wellorder of $H_{\mu^{++}}$).  In particular, since $\mu \subset W$, it follows that $M(W) \cap \mu = M \cap \mu$,\footnote{This is where we needed to know that $W$ had transitive intersection with $\mu$; i.e.\ why we require that the reflecting set $W$ is in $\wp^*_{\mu^+}(-)$ rather than just in $\wp_{\mu^+}(-)$.}  So $M(W) \in \text{End}_\mu(M)$.  But also $W \cap \mu^+ \in M(W)$, and $W \cap \mu^+$ is at least as large as $\text{sup}(M \cap \mu^+)$, because $M \subset W$.  Hence $M(W)$ properly end extends $M$ below $\mu^+$.  This contradicts that $M \in S$.

Then, letting $M':= M(W)$, we have a contradiction to the fact that $M \in S$.
\end{proof}

Theorem \ref{thm_WRP_implies_SCC} actually follows from a weaker assumption (see Theorem \ref{thm_DoeblerSchindler} below), but we chose to sketch the proof of Theorem \ref{thm_WRP_implies_SCC} under non-optimal hypotheses, for a couple of reasons.  Firstly, it is all that we need for its main application in Section \ref{sec_Higher}.  Secondly, it highlights what the author considers to be an interesting open problem.  Notice that (in the case $\mu = \omega_1$, for simplicity) the proof actually shows that WRP implies that for every large regular $\theta$ and almost every $M \in [H_\theta]^\omega$, there is a $W \in \wp^*_{\mu^+}(H_\theta)$ such that $M(W) \cap W = M$.  This seems awfully close to getting Global $\text{SCC}^{\text{cof}}_{\text{gap}}$, but in order to obtain the latter, one seems to need that the $M$ from the proof is also an \emph{element} of $W$, so that any purported bound on $\Gamma^{\mathfrak{A}}(M)$ (using the notation from Definition \ref{def_SCC_Global}) would be an element of $W$, and hence $W$ would be beyond this bound, leading to a contradiction.  But it is not clear that we can arrange that $M \in W$ from WRP alone.  This was the apparent motivation of the principle $\textbf{RP}_{\textbf{internal}}$ introduced by Fuchino-Usuba~\cite{FuchinoUsuba} (though under a different name); this principle asserts that for all regular $\theta \ge \omega_2$ and all stationary $S \subseteq \wp_{\omega_1}(H_\theta)$, there is a $W \in \wp^*_{\omega_2}(H_\theta)$ such that  $S \cap W \cap \wp_{\omega_1}(W)$---not merely $S \cap \wp_{\omega_1}(W)$---is stationary in $\wp_{\omega_1}(W)$.  Fuchino and Usuba proved:
\begin{theorem}[Fuchino-Usuba~\cite{FuchinoUsuba}]
\[
\text{RP}_{\text{internal}} \ \iff \ \text{Global } \text{SCC}^{\text{cof}}_{\text{gap}}.
\]
\end{theorem}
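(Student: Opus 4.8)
The plan is to prove the two implications separately, in both cases using Lemma \ref{lem_DontEscapeMcapW} as the engine and keeping the ``internal'' requirement (membership $N \in W$, not merely $N \subseteq W$) as the focal point, since that is exactly the feature of $\text{RP}_{\text{internal}}$ that Global $\text{SCC}^{\text{cof}}_{\text{gap}}$ is designed to capture (cf.\ the discussion following Theorem \ref{thm_WRP_implies_SCC}).

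For the direction $\text{RP}_{\text{internal}} \implies \text{Global } \text{SCC}^{\text{cof}}_{\text{gap}}$, I would argue by contraposition. Assuming Global $\text{SCC}^{\text{cof}}_{\text{gap}}$ fails, I would first upgrade a single counterexample into a \emph{stationary} set $B \subseteq \wp_{\omega_1}(H_\theta)$ of countable $M \prec \mathfrak A := (H_\theta,\in,\Delta)$ for which $\Gamma^{\mathfrak A}(M)$ is bounded in $\wp^*_{\omega_2}(H_\theta)$ (this is the global analogue of Lemma \ref{lem_StatManyCounterexamples}, and I expect it to be the first technical hurdle). Apply $\text{RP}_{\text{internal}}$ to $B$ to obtain a $W \in \wp^*_{\omega_2}(H_\theta)$, which I would arrange to be elementary in $\mathfrak A$ with $\omega_1 \subseteq W$, such that $B \cap W \cap \wp_{\omega_1}(W)$ is stationary in $\wp_{\omega_1}(W)$. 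Intersecting this stationary set with the club of $P$ satisfying $\text{Hull}^{\mathfrak A}(P \cup \{W\}) \cap W = P$ furnished by Lemma \ref{lem_DontEscapeMcapW} (applied to $\mathfrak A$ with $W$ adjoined as a constant), I obtain a single $N$ that is simultaneously in $B$, an element of $W$, a subset of $W$, and satisfies $N^{\mathfrak A}(W) \cap W = N$; that is, $W \in \Gamma^{\mathfrak A}(N)$. The punchline is that, letting $X_N$ be the $\Delta$-least bound on $\Gamma^{\mathfrak A}(N)$ (which exists since $N \in B$), the fact that $N$ is an \emph{element} of $W \prec \mathfrak A$ forces $X_N \in W$, and then $\omega_1 \subseteq W$ together with $|X_N| = \omega_1$ gives $X_N \subseteq W$; so $W$ is a member of $\Gamma^{\mathfrak A}(N)$ lying above its own bound, a contradiction. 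This is precisely the mechanism flagged in the text: the bound becomes an element of $W$ only because the witness is genuinely internal.

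For the converse $\text{Global } \text{SCC}^{\text{cof}}_{\text{gap}} \implies \text{RP}_{\text{internal}}$, I would start from a stationary $S \subseteq \wp_{\omega_1}(H_\theta)$, pass to a larger structure $\mathfrak A^+ = (H_{\theta'}, \in, \Delta^+, \mathfrak A, S)$, and choose a countable $M \prec \mathfrak A^+$ with $M \cap H_\theta \in S$ (possible since the preimage of a stationary set under $M \mapsto M \cap H_\theta$ is stationary). Applying Global $\text{SCC}^{\text{cof}}_{\text{gap}}$ to $M$ yields cofinally many $W' \in \Gamma^{\mathfrak A^+}(M)$; I would choose one with $M \in W'$ (taking the target to contain $M$ as an element) and, crucially, internally approachable, and set $W := W' \cap H_\theta \in \wp^*_{\omega_2}(H_\theta)$. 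The hull equation $M^{\mathfrak A^+}(W') \cap W' = M$ guarantees, by the same computation as in Lemma \ref{lem_ZFC_GlobalSCC}, that witnesses to membership in $S$ produced inside $W$ cannot escape $W$; and internal approachability of $W$ (Fact \ref{fact_IA}) supplies a club of countable submodels of $W$ that are \emph{elements} of $W$, reducing arbitrary test functions to ones available in $W'$ and thereby converting ordinary reflection of $S$ into the internal reflection demanded by $\text{RP}_{\text{internal}}$.

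The hardest part, I expect, is not any single computation but the two places where ``how many'' and ``internal'' interact. In the reverse implication it is the passage from a \emph{single} good pair $(M,W)$ to \emph{stationarily many} internal witnesses: this seems to require that the reflecting $W$ be internally approachable while still lying in $\Gamma^{\mathfrak A^+}(M)$, and---more substantively---that Global $\text{SCC}^{\text{cof}}_{\text{gap}}$ genuinely delivers stationary reflection of $S$ rather than a lone end-extension (recall that SCC already implies stationary reflection to ordinals in $(\omega_1,\omega_2)$). In the forward implication the analogous difficulty is establishing that the counterexample set $B$ is stationary. Both are exactly the points at which the distinction between $N \subseteq W$ and $N \in W$ does real work, and I would expect the bulk of the proof to be spent making those two steps precise; the remaining bookkeeping (elementarity of the reflection point, $\omega_1 \subseteq W$, and the level-$\theta$ versus level-$\theta'$ passage) is routine.
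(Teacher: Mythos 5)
First, a caveat: the paper does not prove this theorem---it is quoted from Fuchino--Usuba---so the only thing to compare your attempt against is the motivating discussion that precedes the statement. Your forward direction ($\text{RP}_{\text{internal}} \Rightarrow \text{Global } \text{SCC}^{\text{cof}}_{\text{gap}}$) captures exactly the mechanism that discussion describes: reflect a stationary set $B$ of counterexamples to an internal $W$, use Lemma \ref{lem_DontEscapeMcapW} to find $N \in B \cap W \cap \wp_{\omega_1}(W)$ with $N^{\mathfrak{A}}(W) \cap W = N$, and observe that the $\Delta$-least bound $X_N$ on $\Gamma^{\mathfrak{A}}(N)$ is definable from $N \in W \prec \mathfrak{A}$, hence an element and (since $\omega_1 \subseteq W$) a subset of $W$, so $W$ exceeds its own bound. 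That part is right. The gap is the step you flag and then skip: upgrading one counterexample to a stationary set. This does not follow from Lemma \ref{lem_StatManyCounterexamples}, which is stated (and itself only cited) for the local $\text{SCC}(\mu)$, and the obvious closure/copying arguments fail for the global principle: if $M \subseteq M^+$ with $M^+$ closed under a function witnessing nonstationarity of the counterexample set, cofinality of $\Gamma^{\mathfrak{A}}(M^+)$ only gives $M^{\mathfrak{A}}(W) \cap W \subseteq M^+$, not $= M$, and conversely a bound for $M$ does not transfer to one for $M^+$. As written, your argument yields only the ``for club-many $M$'' form of Definition \ref{def_SCC_Global}; closing the gap to ``for all countable $M \prec (H_\theta,\in,\Delta)$'' is genuine work (or requires adopting the club formulation of the principle).

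The more serious problem is in the converse. You produce a single pair $(M, W')$ with $M \cap H_\theta \in S$ and $W' \in \Gamma^{\mathfrak{A}^+}(M)$, and then assert that internal approachability of $W := W' \cap H_\theta$ ``converts ordinary reflection into internal reflection.'' That is not yet an argument: to show $S \cap W \cap \wp_{\omega_1}(W)$ is stationary in $\wp_{\omega_1}(W)$ you must defeat an arbitrary $G : [W]^{<\omega} \to W$, and such a $G$ is in general visible to neither $M$ nor $W'$, so nothing you have constructed is closed under it; you have exhibited at most the one element $M \cap H_\theta$. The standard repair is to run this implication by contradiction: assume $S$ reflects internally to no $W \in \wp^*_{\omega_2}(H_\theta)$, place the indexed family $\langle G_W \rangle$ of witnessing functions into the structure $\mathfrak{A}^+$ \emph{before} choosing $M$, and then show that the single good pair yields an $N \in S \cap W \cap \wp_{\omega_1}(W)$ closed under $G_W$ (with $N \in W$ coming from internal clubness of $W$ via Fact \ref{fact_IA}). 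Your sketch is not set up to do this. Finally, you repeatedly need $W'$ to lie simultaneously in $\Gamma^{\mathfrak{A}^+}(M)$ and in various clubs (elementary in $\mathfrak{A}^+$, internally approachable, $\omega_1 \subseteq W'$); but $\Gamma^{\mathfrak{A}^+}(M)$ is only known to be $\subseteq$-cofinal, and a cofinal set need not meet a club. That intersection step needs its own justification---compare the footnote to Lemma \ref{lem_ZFC_GlobalSCC}, where the $\text{IA}_{\omega_1}$ refinement is obtained by redoing the construction, not by intersecting after the fact.
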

Now clearly $\text{RP}_{\text{internal}} \implies \text{WRP}$, but whether this implication is actually an equivalence is open.  More details on these and related problems can be found in Cox~\cite{Cox_RP_IS}.

We mentioned above that the assumptions of Theorem \ref{thm_WRP_implies_SCC} were not optimal.  The optimal result is due to Doebler and Schindler, and involves the \emph{Semistationary Set Reflection Principle (SSR)}, which is weaker than WRP, but still quite strong:
\begin{theorem}[Doebler-Schindler~\cite{MR2576698}]\label{thm_DoeblerSchindler}
\[
\text{SSR} \ \iff \ \text{Global } \text{SCC}^{\text{cof}}.
\]
\end{theorem}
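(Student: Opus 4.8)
The plan is to establish the two implications separately. I read Global $\text{SCC}^{\text{cof}}$ (Definition~\ref{def_SCC_Global}), after unwinding the $\wp^*_{\omega_2}$ and $\sqsupseteq$ notation, as the statement: for every countable $M \prec \mathfrak{A} = (H_\theta,\in,\Delta)$ there are $\subseteq$-cofinally many $W$ with $\omega_1 \subseteq W \prec \mathfrak{A}$, $|W| = \omega_1$, $M \subseteq W$, and $M^{\mathfrak{A}}(W) \cap \omega_1 = M \cap \omega_1$; equivalently, the hull $M^{\mathfrak{A}}(W)$ is a \emph{Chang-type} elementary extension of $M$ (of size $\omega_1$ but with countable $\omega_1$-part) whose ordinals are pushed cofinally through $\omega_2$ as $W$ grows. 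The forward implication will closely track the proof of Theorem~\ref{thm_WRP_implies_SCC}, with \emph{semistationary} reflection (SSR) in place of stationary reflection (WRP); the reverse will feed a model built over a putative non-reflecting semistationary set into Global $\text{SCC}^{\text{cof}}$ and read off a reflection point, contradicting non-reflection.

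For SSR $\implies$ Global $\text{SCC}^{\text{cof}}$, I fix $\mathfrak{A}$ and a countable $M \prec \mathfrak{A}$ and, toward a contradiction, assume $\Gamma^{\mathfrak{A}}_{\sqsubseteq}(M)$ is bounded by some $W^{*} \in \wp^*_{\omega_2}(H_\theta)$. Exactly as in Theorem~\ref{thm_WRP_implies_SCC} (and its reformulation via Lemma~\ref{lem_StatManyCounterexamples}), I pass to the set of countable \emph{witnesses} to the failure and check that it is semistationary; applying SSR reflects it to some $W$ with $\omega_1 \subseteq W$, $|W| = \omega_1$, which I may take to extend $W^{*}$ and to satisfy $M \in W \prec \mathfrak{A}$. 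The decisive step is then Lemma~\ref{lem_DontEscapeMcapW}, applied to $\mathfrak{A}$ expanded by a constant naming $W$: almost every countable subset of $W$ has non-escaping hull, and because the reflected witnesses appear cofinally in $[W]^\omega$, one such hull $\sqsubseteq$-end-extends $M$ while reaching above $\sup(W^{*} \cap \omega_2)$. This produces a member of $\Gamma^{\mathfrak{A}}_{\sqsubseteq}(M)$ beyond $W^{*}$, the desired contradiction. I expect only the $\sqsubseteq$-end-extension (not the equality of Global $\text{SCC}^{\text{cof}}_{\text{gap}}$) to survive, which is precisely why SSR---a principle phrased through the $\sqsubseteq$-relation---matches $\text{SCC}^{\text{cof}}$ rather than the gap version.

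For Global $\text{SCC}^{\text{cof}} \implies \text{SSR}$, I argue by contraposition. Given a semistationary $S \subseteq [H_\theta]^\omega$, I work in a larger $\mathfrak{A} = (H_{\theta'},\in,\Delta)$ with $S,\theta$ named, pick a countable $M \prec \mathfrak{A}$ with $S \in M$ that captures a semistationarity witness for $S$, and apply Global $\text{SCC}^{\text{cof}}$ to obtain a Chang-type $W$ with $M \subseteq W$ and $M^{\mathfrak{A}}(W) \cap \omega_1 = M \cap \omega_1$, choosing $W$ past any prescribed bound. Setting $\bar{W} := W \cap H_\theta$, I claim $S \cap [\bar{W}]^\omega$ is semistationary in $[\bar{W}]^\omega$: since adjoining $W$ to $M$ added no new countable ordinal, any $\sqsubseteq$-witness $M_* \sqsubseteq N$ for $S$ that is definable inside $M^{\mathfrak{A}}(W)$ must land inside $\bar{W}$, so the witnesses reflect. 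As this holds for a size-$\omega_1$ set $\bar{W}$ containing $\omega_1$, it exhibits the reflection demanded by SSR.

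The step I expect to be the main obstacle is the semistationary bookkeeping that glues the two halves together---namely the precise dictionary between ``$M^{\mathfrak{A}}(W) \cap \omega_1 = M \cap \omega_1$ for cofinally many $W$'' and ``$S$ reflects.'' The naive candidate $\{N : M \sqsubseteq N \prec \mathfrak{A}\}$ is \emph{not} semistationary (closing an arbitrary finite-arity function over $M$ typically enlarges the $\omega_1$-part), so in the forward direction one cannot simply reflect it; one must instead reflect the witness/counterexample set and transfer its semistationarity through the Skolem hull, as Lemma~\ref{lem_DontEscapeMcapW} permits. Equally delicate is the structural tension that SSR reflects to sets $W$ with $\omega_1 \subseteq W$, whereas the object doing the Chang-type work, $M^{\mathfrak{A}}(W)$, has \emph{countable} intersection with $\omega_1$; keeping these two roles straight---$W$ as the reflection point, $M^{\mathfrak{A}}(W)$ as the end-extension---is where the argument must be handled most carefully, and is presumably why Doebler--Schindler's optimal result needs SSR rather than the stronger WRP used in Theorem~\ref{thm_WRP_implies_SCC}.
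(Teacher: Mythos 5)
First, a framing point: the survey does not prove Theorem \ref{thm_DoeblerSchindler} at all --- it is quoted from Doebler--Schindler \cite{MR2576698} --- so there is no in-paper proof to compare against. Judged on its own terms, your sketch of the direction Global $\text{SCC}^{\text{cof}} \Rightarrow \text{SSR}$ is essentially right in outline: fix counterexample functions $F_W$ for every candidate reflection point $W$, put the sequence $\langle F_W \rangle$ and a $\sqsubseteq$-predecessor $N_0 \in S$ of $M \cap H_\theta$ into a countable $M \prec H_{\theta'}$, apply Global $\text{SCC}^{\text{cof}}$ to get $W \supseteq M$ with $M^{\mathfrak{A}}(W) \cap \omega_1 = M \cap \omega_1$, and observe that $M^{\mathfrak{A}}(W) \cap \bar W$ is $F_{\bar W}$-closed and $\sqsupseteq N_0$. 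That is a correct plan, though you should state it at this level of precision rather than via ``witnesses definable inside the hull.''

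The direction $\text{SSR} \Rightarrow$ Global $\text{SCC}^{\text{cof}}$ has a genuine gap: you never identify the semistationary set you intend to reflect, and the candidates implicit in your write-up do not work. You correctly observe that $\{N : M \sqsubseteq N \prec \mathfrak{A}\}$ is nonstationary (hence non-semistationary, since it is $\sqsubseteq$-upward closed among elementary submodels), so nothing tied to the \emph{single} fixed $M$ can be reflected; yet your final step requires the hull produced inside $[W]^\omega$ to $\sqsubseteq$-end-extend that same $M$ past $\sup(W^* \cap \omega_2)$. The alternative you gesture at --- reflecting ``the set of counterexamples, as in Lemma \ref{lem_StatManyCounterexamples}'' --- runs into two problems you do not address. (i) Lemma \ref{lem_StatManyCounterexamples} concerns the local principle $\text{SCC}(\mu)$, where a counterexample is an $M$ with \emph{no} proper end-extension whatsoever; for the cofinal/global principle each counterexample $N$ comes with its own bound $W_N$, and it is not clear (and does not follow from the cited lemma) that one counterexample yields stationarily many, let alone stationarily many sharing a uniform bound: if $M \in N$ and $N$ has a good end-extension $N^{\mathfrak{A}}(W) \sqsupseteq N$, this gives $M^{\mathfrak{A}}(W) \cap \omega_1 \subseteq N \cap \omega_1$, not $= M \cap \omega_1$. (ii) Even granting a stationary set $R$ of counterexamples, the $N \in R \cap [W]^\omega$ that you recover after reflection and Lemma \ref{lem_DontEscapeMcapW} is some model whose own bound $W_N$ must be dominated by the reflecting $W$; arranging $W \supseteq W^*$ for the original $M$ does nothing for this $N$, and one cannot freely thin a semistationary set to models containing a prescribed parameter (doing so can destroy semistationarity). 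Fixing this requires a real idea --- e.g., coding the map $N \mapsto W_N$ into the structure so that $W_N \in W$ for the relevant $N$, after first establishing that counterexamples are stationary --- or abandoning the direct route entirely in favor of the published one, which passes through Shelah's theorem that SSR makes every $\omega_1$-stationary-set-preserving forcing semiproper and then extracts Global $\text{SCC}^{\text{cof}}$ from semiproperness of specific collapses.
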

They also obtained several other interesting statements that are also equivalent to Global $\text{SCC}^{\text{cof}}$, e.g.\ the assertion (famously introduced in \cite{MR924672}) that every $\omega_1$-stationary set preserving forcing is semiproper.

\subsection{Strong Chang's Conjecture and the Tree Property}\label{sec_SCC_TP}
The principle $\text{SCC}^{\text{cof}}$ and its global version found applications in recent work of Torres-P{\'e}rez and Wu.  The principle $\text{TP}(\omega_2)$ asserts that there are no $\omega_2$-Aronszajn trees.  The principle $\text{ITP}(\omega_2)$ is a strengthening of $\text{TP}(\omega_2)$  introduced by Weiss~\cite{Weiss_CombEssence}, whose definition we will not give here. 
\begin{theorem}[Torres-P{\'e}rez and Wu]\label{thm_TorresPerezWu}
Assume that the Continuum Hypothesis (CH) fails.  Then:
\begin{enumerate}[label=(\alph*)]
 \item\label{item_NegCH_SCCcof} $\text{SCC}^{\text{cof}}$ implies $\text{TP}(\omega_2)$ (\cite{MR3431031}). 
 \item Global $\text{SCC}^{\text{cof}}$ implies $\text{ITP}(\omega_2)$ (\cite{MR3600760}).
\end{enumerate}
\end{theorem}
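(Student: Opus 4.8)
The plan is to argue by contradiction in each case, manufacturing a branch through a would-be $\omega_2$-Aronszajn object out of the end-extension property supplied by (Global) $\text{SCC}^{\text{cof}}$. It is cleanest to pass first to the combinatorial reformulation of the tree property due to Weiss: $\text{TP}(\omega_2)$ is equivalent to the assertion that every \emph{thin} $\mathcal{P}_{\omega_2}(\lambda)$-list $\langle d_a : a \in \mathcal{P}_{\omega_2}(\lambda)\rangle$ (each $d_a \subseteq a$, with only $\le \omega_1$ many traces $\{d_b \cap a : b \supseteq a\}$ over each $a$) admits a \emph{cofinal} branch $b \subseteq \lambda$, i.e.\ one with $\{a : b \cap a = d_a\}$ unbounded in $\mathcal{P}_{\omega_2}(\lambda)$; while $\text{ITP}(\omega_2)$ is the same statement with ``cofinal'' strengthened to ``stationary'' (an \emph{ineffable} branch). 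A tree is the special case $\lambda = \omega_2$, and an Aronszajn tree yields a thin list with no cofinal branch, so in every case it suffices to produce the branch.

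For part (a), fix a thin list $D$, a large regular $\theta$, a wellorder $\Delta$, and set $\mathfrak{A} := (H_\theta, \in, \Delta, D)$. Given any $a_0 \in \mathcal{P}_{\omega_2}(\lambda)$ that we wish to ``reach'', start with a countable $M_0 \prec \mathfrak{A}$ containing $a_0$ and iterate the end-extension guaranteed by $\text{SCC}^{\text{cof}}$ exactly $\omega_1$ times, always keeping $M_\xi \cap \omega_1$ fixed and pushing $\sup(M_\xi \cap \lambda)$ strictly upward. The continuous union $W := \bigcup_{\xi < \omega_1} M_\xi$ then satisfies $W \prec \mathfrak{A}$, $|W| = \omega_1$, $W \cap \omega_1 \in \omega_1$, and $a_0 \subseteq a^* := W \cap \lambda$. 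The crux is to read off from $d_{a^*}$ a single $b \subseteq \lambda$, \emph{independent of} $a_0$, with $d_{a^*} = b \cap a^*$: by thinness the traces $d_{a^*} \cap (M_\xi \cap \lambda)$ range over a family of size $\le \omega_1$, and $\neg$CH is exactly what forces these approximations to cohere into one branch rather than splitting into continuum-many incompatible partial branches. Since $a_0$ was arbitrary and $a^* \supseteq a_0$, the resulting $b$ meets $\{a : b \cap a = d_a\}$ cofinally, contradicting Aronszajn-ness. As a sanity check one may instead invoke Theorem \ref{thm_Shelah_SCCcof}: force with a semiproper $\mathbb{P}$ making $\text{cf}(\omega_2^V) = \omega$, note that a pruned tree whose height has countable cofinality trivially acquires a cofinal branch, and then use semiproperness together with $\neg$CH to pull that branch back into $V$.

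For part (b) the same model-theoretic construction is run with Global $\text{SCC}^{\text{cof}}$ in place of its local form. The good size-$\omega_1$ hulls are now supplied directly by the sets $\Gamma^{\mathfrak{A}}_{\sqsubseteq}(M)$ of Definition \ref{def_SCC_Global}, and Lemma \ref{lem_DontEscapeMcapW} guarantees that inside such a $W$ the relevant hull does not escape $W$, so the captured trace genuinely lives in $W$. The point is that Global $\text{SCC}^{\text{cof}}$ provides not merely one capturing $W$ above each $a_0$ but $\subseteq$-cofinally many, and a Fodor/pressing-down argument (exploiting the constancy of $W \cap \omega_1$) upgrades ``cofinally many $a$ agree with $b$'' to ``stationarily many $a$ agree with $b$'', i.e.\ $b$ is an ineffable branch and $\text{ITP}(\omega_2)$ holds. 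Equivalently, one may route through Theorem \ref{thm_DoeblerSchindler}, replacing Global $\text{SCC}^{\text{cof}}$ by the semistationary reflection principle SSR and reflecting the stationary set of ``bad'' lists.

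The main obstacle in both parts is pinning down precisely where $\neg$CH enters, namely the branch-capturing step: one must show that the $\le \omega_1$ many thin traces seen along the chain $\langle M_\xi : \xi < \omega_1 \rangle$ amalgamate into a single function on $a^*$, and that this amalgamation provably fails under CH (which is consistent with an indestructible special Aronszajn tree on $\omega_2$). A secondary difficulty, specific to (b), is verifying that the cofinally-many capturing hulls furnished by Global $\text{SCC}^{\text{cof}}$ really do concentrate on a stationary set of $a$'s rather than merely an unbounded one; this is exactly where the global (as opposed to local) hypothesis, or equivalently SSR via Theorem \ref{thm_DoeblerSchindler}, is indispensable.
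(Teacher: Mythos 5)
First, a point of comparison: the survey does not prove Theorem \ref{thm_TorresPerezWu} at all --- it only cites \cite{MR3431031} and \cite{MR3600760} --- so there is no in-paper argument to measure yours against, and your proposal must stand on its own. Judged that way, it is an outline with the central step missing rather than a proof. The entire content of part (a) is the ``crux'' you name yourself: showing that the $\le\omega_1$ many thin traces seen along the chain $\langle M_\xi : \xi<\omega_1\rangle$ amalgamate into a single branch, and exhibiting exactly how $\neg$CH forces this. You assert that ``$\neg$CH is exactly what forces these approximations to cohere'' but supply no mechanism; since $\text{SCC}^{\text{cof}}$ is consistent with CH, and CH gives a special $\omega_2$-Aronszajn tree, any correct argument must make quantitative use of $2^\omega\ge\omega_2$ (in Torres-P\'erez--Wu this is a counting argument comparing the number of cofinal branches of the countable tree $T\cap M$ induced by nodes of $T$ lying above $\sup(M\cap\omega_2)$ against the continuum). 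Flagging this as ``the main obstacle'' does not discharge it. The same criticism applies with more force to part (b): ``a Fodor/pressing-down argument upgrades cofinally many to stationarily many'' is not an argument --- the gap between an unbounded and a stationary set of agreeing $a$'s is precisely the gap between TP and ITP, and pressing down on the constant value $W\cap\omega_1$ gives nothing.

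Several surrounding assertions are also wrong or unachievable as written. (i) $\text{TP}(\omega_2)$ is equivalent to the thin-list branch property only for $\lambda=\omega_2$; the version for all $\lambda$ is the \emph{strong} tree property, which is strictly stronger, so your reduction in (a) must be confined to $\lambda=\omega_2$. (ii) Starting from a countable $M_0$ with $a_0\in M_0$ and iterating $\sqsubseteq$-end-extensions $\omega_1$ times cannot yield $a_0\subseteq W$ when $|a_0|=\omega_1$: $\text{SCC}^{\text{cof}}$ controls only $\sup(M'\cap\omega_2)$, not which ordinals get swept up, so ``$a_0\subseteq a^*$'' is unjustified, and with it the $\subseteq$-cofinality of your branch set in $\mathcal{P}_{\omega_2}(\lambda)$. (iii) The ``sanity check'' is broken: a tree whose height has been given countable cofinality does not ``trivially acquire a cofinal branch'' --- that is K\"onig's lemma, which fails for uncountable levels; the genuine route through Theorem \ref{thm_Shelah_SCCcof} and Namba forcing needs a real argument that the generic $\omega$-sequence of levels threads a branch, and that argument is again where $\neg$CH does its work. (iv) ``Reflecting the stationary set of bad lists'' via Theorem \ref{thm_DoeblerSchindler} is not meaningful as stated: SSR reflects semistationary subsets of $[H_\theta]^\omega$, not lists.
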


The assumption that CH fails in Theorem \ref{thm_TorresPerezWu} cannot be removed, since Global $\text{SCC}^{\text{cof}}$ is consistent with CH, and CH implies failure of $\text{TP}(\omega_2)$ and $\text{ITP}(\omega_2)$.

Todorcevic~\cite{MR1261218} proved that SCC implies every stationary subset of $[\omega_2]^\omega$ reflects to a set of size $\omega_1$ (this is a local fragment of $\text{WRP}(\wp^*_{\omega_1})$).  So in light of Theorem \ref{thm_TorresPerezWu}, the following question from \cite{MR3431031} is a natural one:
\begin{question}
Suppose CH fails and every stationary subset of $[\omega_2]^\omega$ reflects to a set of size $\omega_1$.  Must $\text{TP}(\omega_2)$ hold?
\end{question}

\subsection{WRP and presaturation}\label{sec_Presat}

We now return, yet again, to the notion of antichain catching introduced in Section \ref{sec_IntroAntichainCatching}.  We say that $\text{NS}_{\omega_1}$ is \textbf{presaturated} iff whenever $\langle A_n \ : \ n < \omega \rangle$ is an $\omega$-sequence of maximal antichains in $\wp(\omega_1)/\text{NS}_{\omega_1}$, there are densely many $T$ (i.e.\ densely many stationary sets in the boolean algebra $\wp(\omega_1)/\text{NS}_{\omega_1}$) such that for every $n < \omega$, $T$ is compatible with at most $\omega_1$ many members of $A_n$.  Presaturation suffices for many of the applications of saturation; in particular, presaturation yields ``generic almost huge embeddings" (see \cite{MattHandbook}).

  The following theorem is not optimal; the weaker \emph{Semistationary Reflection Principle} suffices instead of WRP.  But the idea is similar. 

\begin{theorem}[\cite{MR924672}]
WRP implies that $\text{NS}_{\omega_1}$ is presaturated.
\end{theorem}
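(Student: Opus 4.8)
The plan is to prove that $\text{WRP}$ implies presaturation of $\text{NS}_{\omega_1}$ by contradiction, using the characterization of presaturation in terms of antichain catching together with $\text{WRP}$'s reflection power and the ubiquitous Lemma~\ref{lem_DontEscapeMcapW}. First I would unwind what a failure of presaturation gives us: an $\omega$-sequence $\langle A_n : n < \omega \rangle$ of maximal antichains in $\wp(\omega_1)/\text{NS}_{\omega_1}$ and a stationary set $T_0$ such that \emph{no} stationary $T \subseteq T_0$ is compatible with at most $\omega_1$-many members of each $A_n$; equivalently, below $T_0$ every stationary set meets uncountably many members of some $A_n$. I want to convert this into a statement about countable models failing to ``simultaneously catch'' all the $A_n$ in a suitable way, analogous to the \eqref{item_CatchOne} $\implies$ \eqref{item_Sat} direction of Lemma~\ref{lem_Foreman_CharSat}.

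The heart of the argument is a reflection step. Working in $\mathfrak{A} := (H_\theta, \in, \Delta)$ for large regular $\theta$ with the sequence $\langle A_n : n<\omega\rangle$ and $T_0$ coded in, I would consider the set $S$ of countable $M \prec \mathfrak{A}$ with $M \cap \omega_1 \in T_0$ that witness bad behavior---i.e.\ for which one cannot find a single stationary ``target'' catching all the $A_n$ appropriately. The goal is to show $S$ is stationary in $[H_\theta]^\omega$, then apply $\text{WRP}$ to reflect $S$ down to some $W$ with $|W| = \omega_1 \subset W$, so that $S \cap [W]^\omega$ is stationary in $[W]^\omega$. Now I invoke Lemma~\ref{lem_DontEscapeMcapW} (with $\mathfrak{B}$ a Skolemized expansion of $\mathfrak{A}$ having $W$ as a predicate): there is an $M \in S \cap [W]^\omega$ with $\text{Hull}^{\mathfrak{B}}(M) \cap W = M$. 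The uncountable model $W$ plays the role of the ``presaturating'' object: because $|W \cap \omega_1| = \omega_1$ and $W$ reflects the antichains, $W$ itself should witness that $T_0$-compatibility with countably many members of each $A_n$ is achievable, and the $\sqsubseteq$-relationship $M \sqsubseteq \text{Hull}^{\mathfrak{B}}(M)$ together with $\text{Hull}^{\mathfrak{B}}(M) \cap W = M$ prevents new countable ordinals from leaking in, yielding the contradiction with $M \in S$ via Lemma~\ref{lem_McapOmega1} (since any $S_n \in A_n$ caught inside $W$ but incompatible with the target would force $M \cap \omega_1$ into a nonstationary intersection).

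The main obstacle, and the step I expect to require the most care, is the bookkeeping that translates the combinatorial failure of presaturation---which is a statement about a \emph{fixed countable sequence} of antichains and a density requirement---into a single stationary set $S$ whose reflection genuinely produces the contradiction. Unlike the single-antichain situation of Lemma~\ref{lem_Foreman_CharSat}, here one must handle all $\omega$-many $A_n$ simultaneously and track the ``at most $\omega_1$-many members'' clause, which interacts with the size of $W$ rather than with saturation directly. In particular I would need to verify that the reflecting model $W \in \wp^*_{\omega_2}(H_\theta)$ can be chosen to see enough of each $A_n$ that the catching structure inside $W$ certifies a legitimate dense target; this is where having $|W| = \omega_1$ (so that $W$ can contain $\omega_1$-many antichain elements) is essential and where the presaturation bound $\le \omega_1$ exactly matches the cardinality of $W$. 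Once that alignment is set up correctly, the final contradiction is the same Lemma~\ref{lem_McapOmega1} argument used throughout the paper: a model $N' := \text{Hull}^{\mathfrak{B}}(M)$ with $N' \cap \omega_1 = M \cap \omega_1 \in S_n \cap S_n'$ for distinct (hence almost-disjoint) members $S_n, S_n'$ of the antichain $A_n$, which is impossible.
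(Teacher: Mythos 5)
There is a genuine gap, and it sits exactly where you flag ``the step I expect to require the most care.'' Your plan is to negate presaturation, extract a stationary set $S$ of ``bad'' countable models, and reflect $S$ once. But the failure of presaturation is a global statement about the stationary set $T_0$ (no stationary subset of it is compatible with $\le\omega_1$ members of each $A_n$); it does not localize to individual countable models, so there is no natural definition of a ``bad'' $M$ whose reflection yields a contradiction. The paper's proof is not by contradiction from the failure of presaturation at all: it is a direct construction of the witnessing $T'$, and it uses WRP \emph{twice}, in two structurally different ways. The first, and the piece entirely missing from your proposal, is the end-extension claim (Claim~\ref{clm_EndExendCatch}): WRP implies that every countable $N\prec(H_\theta,\in,\Delta)$ can be $\sqsubseteq$-extended to a model catching a given maximal antichain $A$. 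This is itself proved by reflecting the stationary set of counterexamples to a $W$ of size $\omega_1$, filtering $W$, choosing $S\in A$ with $S\cap T_R$ stationary, and applying Lemma~\ref{lem_DontEscapeMcapW}. Iterating this claim $\omega$ times is what produces stationarily many $M$ with $M\cap\omega_1\in T$ catching \emph{all} the $A_n$ simultaneously (Claim~\ref{clm_CatchEveryAn}) --- and this simultaneous catching is not available in ZFC alone (Lemma~\ref{lem_ProjStatManyCatch} handles only one antichain at a time; chasing a decreasing sequence of intersections through $\omega$-many antichains is exactly the sort of thing that fails without extra hypotheses).

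The second application of WRP reflects that stationary set $R_T$ to a single $W\in\wp^*_{\omega_2}(H_\theta)$; the witness is then $T':=\{i<\omega_1 : N_i\cap\omega_1=i \text{ and } N_i\in R_T\}$ for a filtration $\langle N_i\rangle$ of $W$, and one shows every $S\in A_n$ compatible with $T'$ lies in $W$, hence there are at most $\omega_1$ of them. Your closing paragraph does correctly anticipate this last step --- the $|W|=\omega_1$ bound matching the presaturation bound, and the Lemma~\ref{lem_McapOmega1} contradiction from two distinct members of $A_n$ both containing $N_i'\cap\omega_1$ --- so the endgame of your sketch is sound. But without the end-extension claim and the iteration that yields simultaneous catching with prescribed trace in $T$, the stationary set you want to reflect does not exist, and the argument cannot start.
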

\begin{proof}
Assume WRP.  We need an end-extension claim.
\begin{nonGlobalClaim}\label{clm_EndExendCatch}
For every maximal antichain $A$, every sufficiently large regular $\theta$, and every wellorder $\Delta$ on $H_\theta$:  whenever $N \prec (H_\theta,\in,\Delta)$, $N$ can be $\sqsubset$-extended to a countable elementary substructure of $(H_\theta,\in,\Delta)$ that catches $A$.

Equivalently: there is an $S \in A$ such that $N \cap \omega_1 \in S$ and
\[
\text{Hull}^{(H_\theta,\in,\Delta)}(N \cup \{S \}) \cap \omega_1 = N \cap \omega_1. 
\]
\end{nonGlobalClaim}
\begin{proof}
(of Claim \ref{clm_EndExendCatch}).  Let $A$ be a maximal antichain, and suppose the claim fails.  Then Lemma \ref{lem_DontEscapeMcapW} can be used to show there are stationarily many $N \in [H_\theta]^\omega$ (for some large $\theta$) for which it fails.  Let $R$ denote this stationary set.  By WRP, there is a $W \in \wp^*_{\omega_2}(H_\theta)$ such that $R \cap [W]^\omega$ is stationary in $[W]^\omega$.  Fix a filtration
\[
\vec{N} = \langle N_i \ : \ i < \omega_1 \rangle
\]
of $W$.  Then 
\[
T_R:= \{ i < \omega_1 \ : \ N_i \cap \omega_1 = i  \text{ and } N_i \in R \} \text{ is stationary in } \omega_1.
\]
Since $A$ is a maximal antichain, there is some $S \in A$ such that $S \cap T_R$ is stationary.  Then
\[
P:=\{ N_i \ : \ i \in T_R \cap S \} \text{ is stationary in } [W]^\omega.
\]
Then by Lemma \ref{lem_DontEscapeMcapW}, there is an $N_i \in P$ such that
\[
\text{Hull}^{(H_\theta,\in,\Delta,S)}(N_i) \cap W = N_i.
\]
Hence, letting $N'_i:= \text{Hull}^{(H_\theta,\in,\Delta,S)}(N_i)$, we have $S \in N'_i$ and $N'_i \cap \omega_1 = N_i \cap \omega_1 = i \in S \cap T_R$.  So $S$ witnesses that $N'_i$ catches $A$.
\end{proof}

Now assume $\langle A_n \ : \ n < \omega \rangle$ is an $\omega$-sequence of maximal antichains.  Let $T$ be a stationary subset of $\omega_1$; we need to find a stationary subset of $T$ such that for each $n$, the subset is compatible with at most $\omega_1$ many members of $A_n$.

Repeated application of Claim \ref{clm_EndExendCatch} $\omega$-many times easily yields:
\begin{nonGlobalClaim}\label{clm_CatchEveryAn}
Fix a large regular $\lambda$.  Then
\[
\{ M \in [H_\lambda]^\omega \ : \ M \cap \omega_1 \in T \text{ and } M \text{ catches every } A_n \}
\]
is stationary.
\end{nonGlobalClaim}
%\begin{proof}
%(of Claim \ref{clm_CatchEveryAn}).  Suppose not; then there is an $F:[H_\lambda]^{<\omega} \to H_\lambda$ such that every countable set closed under $F$ either fails to project into $T$, or fails to catch some $A_n$.  
%
%Fix $\theta >> \lambda$, and a wellorder $\Delta$ on $H_\theta$.  Without loss of generality, we can assume the $F$ from the previous paragraph is the $\Delta$-least such function on $H_\lambda$ (so $F$ is in any elementary substructure of $(H_\theta,\in,\Delta)$).  Fix any countable $N \prec (H_\theta,\in,\Delta,\vec{A})$ such that $N \cap \omega_1 \in T$.  By sequentially applying  Claim \ref{clm_EndExendCatch} $\omega$-many times, we obtain an $N_\omega \prec (H_\theta,\in,\Delta)$ such that $N_\omega \sqsupseteq N$ and $N_\omega$ catches every $A_n$.   But then $N_\omega \cap H_\lambda$ also catches every $A_n$ (assuming $\lambda$ was chosen sufficiently large) and $N_\omega \cap H_\lambda \cap \omega_1 \in T$.  But $F \in N_\omega$, so $N_\omega \cap H_\lambda$ is closed under $F$.  This is a contradiction.
%\end{proof}
 
Let $R_T$ denote the stationary subset of $[H_\lambda]^\omega$ given by Claim \ref{clm_CatchEveryAn}. By WRP, $R_T$ reflects to some $W \prec (H_\lambda,\in,\Delta, T,\vec{A})$ such that $|W|=\omega_1 \subset W$.  Let $\vec{N} = \langle N_i \ : \ i < \omega_1 \rangle$ be a filtration of $W$.  Then
\[
T':=\{ i < \omega_1 \ : \  N_i \cap \omega_1 = i \text{ and } N_i \in R_T  \}
\]
is a stationary subset of $T$.  The following claim will finish the proof (this is yet another proof that resembles the \ref{item_CatchOne} $\implies$ \ref{item_Sat} direction of Lemma \ref{lem_Foreman_CharSat}):
\begin{nonGlobalClaim}\label{clm_ContainedInW}
For every $n < \omega$, 
\[
\{ S \in A_n \ : \ T' \cap S \text{ is stationary} \} \subset W.
\]
\end{nonGlobalClaim}
\begin{proof}
(of Claim \ref{clm_ContainedInW}):  Suppose for a contradiction that for some $n < \omega$ and some $S \in A_n \setminus W$, $T' \cap S$ is stationary.  Then
\[
G:=\{ N_i \ : \  N_i \cap \omega_1 = i \in T' \cap S  \}
\]
is a stationary subset of $[W]^\omega$.  By Lemma \ref{lem_DontEscapeMcapW}, there is an $N_i \in G$ such that
\[
\text{Hull}^{(H_\lambda,\in,\Delta, S)}(N_i) \cap W = N_i.
\] 
Now since $N_i \in R$, $N_i$ catches $A_n$; so fix some $S_1 \in A_n$ witnessing this.  Note that $S \ne S_1$ because $S_1 \in N_i \subset W$ but $S \notin W$.  Let $N'_i:= \text{Hull}^{(H_\lambda,\in,\Delta, S)}(N_i)$.  Then, in particular, $N'_i \cap \omega_1 = N_i \cap \omega_1 \in S \cap S_1$. But $S$ and $S_1$ are both elements of  $N'$, and are distinct members of the antichain $A_n$, so $S \cap S_1$ is a nonstationary element of $N'$.  Since $N' \cap \omega_1 \in S \cap S_1$, this contradicts Lemma \ref{lem_McapOmega1}.
\end{proof}
\end{proof}

\subsection{Forcing properties of sealing forcings}

Given a maximal antichain $\mathcal{A}$, the \textbf{sealing forcing for $\mathcal{A}$} (defined by Foreman-Magidor-Shelah~\cite{MR924672}) is the poset $\text{Col}(\omega_1,\mathcal{A})$ followed by shooting a club (using initial segments) through the diagonal union of $\mathcal{A}$.  An equivalent way to represent this forcing is as the set of all pairs $(f,c)$ such that:
\begin{itemize}
 \item $f: \gamma \to \mathcal{A}$ for some $\gamma < \omega_1$;
 \item $c$ is a closed, bounded subset of $\omega_1$ such that 
 \[
 \forall \alpha \in c \ \exists i < \alpha \ \ \alpha \in f(i).
 \]
\end{itemize}
A condition $(f',c')$ is stronger than $(f,c)$ iff $f' \supset f$ and $c'$ end-extends $c$.

We will let $\mathbb{S}_{\mathcal{A}}$ denote this poset.  Foreman-Magidor-Shelah~\cite{MR924672} proved that $\mathbb{S}_{\mathcal{A}}$ always preserves stationary subsets of $\omega_1$; this was used in the proof that MM implies saturation of $\text{NS}_{\omega_1}$.

If $\mathbb{S}_{\mathcal{A}}$ is semiproper for every maximal antichain $\mathcal{A}$, then $\text{NS}_{\omega_1}$ is presaturated; the argument is similar to the proof that WRP (or even SSR) implies presaturation.  

%See \cite{} for a proof.  

When can $\mathbb{S}_{\mathcal{A}}$ be proper?  Certainly if $|\mathcal{A}|\le \omega_1$ it is easy to see that $\mathbb{S}_{\mathcal{A}}$ is proper (in fact, equivalent to a $\sigma$-closed forcng).  M.\ Eskew asked the author if $\mathbb{S}_{\mathcal{A}}$ could ever be proper when $|\mathcal{A}| > \omega_1$.  It cannot; in fact:
\begin{lemma}
Let $\mathcal{A}$ be a maximal antichain in $\wp(\omega_1)/\text{NS}_{\omega_1}$.  The following are equivalent:
\begin{enumerate}
 \item\label{item_SmallAntichain} $|\mathcal{A}| \le \omega_1$.
 \item\label{item_SealingIsSigmaClosed} $\mathbb{S}_{\mathcal{A}}$ is forcing equivalent to a $\sigma$-closed poset.
 \item\label{item_SealingIsProper} $\mathbb{S}_{\mathcal{A}}$ is a proper forcing.

\end{enumerate}
\end{lemma}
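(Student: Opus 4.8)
The plan is to establish the cycle $(\ref{item_SmallAntichain})\Rightarrow(\ref{item_SealingIsSigmaClosed})\Rightarrow(\ref{item_SealingIsProper})\Rightarrow(\ref{item_SmallAntichain})$, organizing everything around the single notion of \emph{catching}: the whole lemma really amounts to the equivalences ``$|\mathcal A|\le\omega_1$'' $\Leftrightarrow$ ``club-many countable $M$ catch $\mathcal A$'' $\Leftrightarrow$ ``$\mathbb S_{\mathcal A}$ has master conditions for club-many $M$''. The implication $(\ref{item_SealingIsSigmaClosed})\Rightarrow(\ref{item_SealingIsProper})$ is immediate, since every $\sigma$-closed poset is proper and properness depends only on the generic extension (hence is invariant under forcing equivalence). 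The real content is the equivalence of $(\ref{item_SmallAntichain})$ with properness, and I expect the direction $(\ref{item_SealingIsProper})\Rightarrow(\ref{item_SmallAntichain})$ to carry essentially all of the difficulty.

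For $(\ref{item_SmallAntichain})\Rightarrow(\ref{item_SealingIsSigmaClosed})$ (the easy direction), I would first fix a ground-model enumeration $\langle S_i : i<\omega_1\rangle$ of $\mathcal A$ and note the standard fact that, by maximality of $\mathcal A$, the complement of $\bigtriangledown\mathcal A$ meets each $S_i$ in a bounded set and is therefore nonstationary; hence $\bigtriangledown\mathcal A$ contains a ground-model club $E$. Adding this enumeration as a parameter, \emph{every} countable $M\prec(H_\theta,\in,\Delta,\mathcal A,\langle S_i\rangle)$ then catches $\mathcal A$: by Lemma \ref{lem_McapOmega1} we have $\delta_M:=M\cap\omega_1\in E$, so $\delta_M\in S_i$ for some $i<\delta_M$, and $i<\delta_M$ forces $S_i\in M$. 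This uniform catching is exactly what lets the countable part of a condition close up at limits, so that one checks $\mathbb S_{\mathcal A}$ is forcing equivalent to the $\sigma$-closed two-step iteration $\text{Col}(\omega_1,\mathcal A)\ast\dot{\mathbb C}$, where $\dot{\mathbb C}$ shoots a club through $\bigtriangledown_F\mathcal A$ (the latter being $\sigma$-closed because $\bigtriangledown_F\mathcal A$ again contains a club). Thus $\mathbb S_{\mathcal A}$ is forcing equivalent to a $\sigma$-closed poset.

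The heart of the proof is $(\ref{item_SealingIsProper})\Rightarrow(\ref{item_SmallAntichain})$. Assume $\mathbb S_{\mathcal A}$ is proper, fix a large regular $\theta$ and a wellorder $\Delta$, and let $M\prec(H_\theta,\in,\Delta,\mathcal A,\mathbb S_{\mathcal A})$ be countable with $\delta_M:=M\cap\omega_1$; by properness, for club-many such $M$ there is an $(M,\mathbb S_{\mathcal A})$-generic condition $q$. Let $C$ and $F:\omega_1\to\mathcal A$ be the generic club and generic function added by $\mathbb S_{\mathcal A}$. Genericity of $q$ gives $q\Vdash M[\dot G]\cap\mathrm{Ord}=M\cap\mathrm{Ord}$, so $q\Vdash M[\dot G]\cap\omega_1=\delta_M$; since the canonical name for $C$ lies in $M$ and $\mathbb S_{\mathcal A}$ preserves $\omega_1$, $C$ is a club of $\omega_1$ in $M[\dot G]$, and elementarity together with closure of $C$ force $q\Vdash\delta_M\in C$. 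By the definition of the conditions every $\alpha\in C$ satisfies $\alpha\in F(i)$ for some $i<\alpha$; applying this at $\alpha=\delta_M$ yields $q\Vdash\exists i<\delta_M\ \delta_M\in F(i)$.

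The crux---and the step I expect to be the main obstacle to write carefully---is promoting this to catching in the ground model. For each $i<\delta_M$ we have $i\in M$, so the name for $F(i)$ belongs to $M$; $(M,\mathbb S_{\mathcal A})$-genericity of $q$ makes the dense, ground-model set of conditions deciding this name predense below $q$ via its trace on $M$, forcing the value of $F(i)$ to be one of the countably many $S\in M\cap\mathcal A$. Combining, $q\Vdash\exists S\in M\cap\mathcal A\ \delta_M\in S$; but this assertion mentions only ground-model objects, so it is outright true, i.e.\ $M$ catches $\mathcal A$. Hence club-many $M$ catch $\mathcal A$, and the $(\ref{item_CatchOne})\Rightarrow(\ref{item_Sat})$ argument from the proof of Lemma \ref{lem_Foreman_CharSat}---which, run for the single antichain $\mathcal A$, uses Lemmas \ref{lem_DontEscapeMcapW} and \ref{lem_McapOmega1} to show that a club of catchers forces $|\mathcal A|\le\omega_1$---delivers $(\ref{item_SmallAntichain})$, closing the cycle.
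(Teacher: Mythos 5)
Your proposal is correct and follows essentially the same route as the paper: the cycle $(1)\Rightarrow(2)\Rightarrow(3)\Rightarrow(1)$, with all the content in $(3)\Rightarrow(1)$, where an $(M,\mathbb S_{\mathcal A})$-generic condition forces $\delta_M$ into the generic club, hence into $F(i)$ for some $i\in M$, so that $F(i)\in M\cap\mathcal A$ witnesses catching, and then the $(\ref{item_CatchOne})\Rightarrow(\ref{item_Sat})$ argument of Lemma \ref{lem_Foreman_CharSat} finishes. The only (harmless) difference is that the paper first notes $\mathbb S_{\mathcal A}$ is $\sigma$-distributive, hence totally proper, and reads the catching directly off an actual totally generic condition $(f,c)$, whereas you run the same argument through the forcing relation and an absoluteness step at the end.
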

\begin{proof}
The implication \ref{item_SmallAntichain} $\implies$ \ref{item_SealingIsSigmaClosed} is straightforward, and left to the reader.  The implication \ref{item_SealingIsSigmaClosed} $\implies$ \ref{item_SealingIsProper} is trivial.

For the \ref{item_SealingIsProper} $\implies$ \ref{item_SmallAntichain} direction:  suppose $\mathbb{S}_{\mathcal{A}}$ is proper.  The sealing forcing is always $\sigma$-distributive; so in fact $\mathbb{S}_{\mathcal{A}}$ is \emph{totally} proper. In other words, for all large regular $\theta$ and all countable $M \prec (H_\theta,\in,\mathcal{A})$, every condition in $M$ can be extended to a condition whose upward closure generates an $(M,\mathbb{S}_{\mathcal{A}})$-generic filter (i.e.\ a filter that meets $D \cap M$ whenever $D \in M$ and $D$ is dense).  We will call such a condition a \emph{totally generic condition} for $M$.  See Abraham~\cite{MR2768684} for these basic facts about these notions.

Fix any such $M$, and let $(f,c)$ be a totally generic condition for $M$.  An easy density argument yields that $M \cap \omega_1 \subseteq \text{dom}(f)$, and $M \cap \omega_1$ is a limit point, and hence element, of the closed set $c$.  Then by the definition of what it means to be a condition, there is some $i < M \cap \omega_1$ such that $M \cap \omega_1 \in f(i)$.  Now $f \restriction (i + 1) \in M$, and hence $f(i) \in M$; so $M$ catches $\mathcal{A}$.

Since $M$ was arbitrary, this shows that club-many $M \in [H_\theta]^\omega$ catch $\mathcal{A}$.  By the same argument as the  \eqref{item_CatchOne} $\implies$ \eqref{item_Sat} direction of the proof of Lemma \ref{lem_Foreman_CharSat}, $\mathcal{A}$ must have cardinality $\le \omega_1$.
\end{proof}

\subsection{Projective CC and saturation of the nonstationary ideal}\label{sec_ProjCC_NS}

In this section we return to the notion ``Projective CC" introduced earlier, and present two results---the older Theorem \ref{thm_IndestSat} and the newer Theorem \ref{thm_Larson_VerySat}---that demonstrate how Projective CC amplifies saturation properties of the nonstationary ideal on $\omega_1$.

\begin{theorem}[Foreman-Magidor-Shelah~\cite{MR924672}]\label{thm_IndestSat}
Suppose $\text{NS}_{\omega_1}$ is saturated, and Projective CC holds.  Then the saturation of $\text{NS}_{\omega_1}$ is ``c.c.c.-indestructible"; i.e.\ every c.c.c.\ forcing extension satisfies that $\text{NS}_{\omega_1}$ is saturated.
\end{theorem}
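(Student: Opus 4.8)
The plan is to show that $\text{NS}_{\omega_1}$ stays saturated in $V[G]$ by establishing the equivalent statement that $\wp(\omega_1)/\text{NS}_{\omega_1}$ is $\omega_2$-c.c.\ there. Two facts about c.c.c.\ forcing will be used throughout: first, $\mathbb{P}$ preserves $\omega_1$ and $\omega_2$, and for subsets of $\omega_1$ lying in $V$ it preserves both stationarity and nonstationarity, so $\text{NS}_{\omega_1}^{V[G]}\cap V=\text{NS}_{\omega_1}^V$; second, as noted in the proof that Projective CC is preserved by c.c.c.\ forcing, $1_{\mathbb{P}}$ is a master condition for \emph{every} elementary submodel, so $X[G]\cap V=X$ for any $X\prec(H_\theta^V,\in,\dots)$. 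The entire argument reduces to one \emph{absorption} property, which I would isolate as the key lemma: $(\dagger)$ for c.c.c.\ $\mathbb{P}$, every stationary $A\subseteq\omega_1$ in $V[G]$ contains, modulo $\text{NS}_{\omega_1}^{V[G]}$, a stationary set belonging to $V$.

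Granting $(\dagger)$, the reduction is immediate and clean, and uses the saturation hypothesis in $V$. Suppose toward a contradiction that $\{A_\alpha:\alpha<\omega_2\}$ is an antichain in $\wp(\omega_1)^{V[G]}/\text{NS}_{\omega_1}^{V[G]}$ consisting of distinct (mod $\text{NS}$) stationary sets. For each $\alpha$ apply $(\dagger)$ to get $B_\alpha\in V$ stationary with $B_\alpha\subseteq A_\alpha$ mod $\text{NS}_{\omega_1}^{V[G]}$. Then for $\alpha\neq\beta$ we have $B_\alpha\cap B_\beta\subseteq A_\alpha\cap A_\beta$ mod $\text{NS}_{\omega_1}^{V[G]}$, and the right side is nonstationary in $V[G]$; since $B_\alpha\cap B_\beta\in V$ and nonstationarity of $V$-sets is absolute, $B_\alpha\cap B_\beta$ is nonstationary in $V$. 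Thus $\{B_\alpha:\alpha<\omega_2\}$ is an antichain of size $\omega_2$ in $\wp(\omega_1)^V/\text{NS}_{\omega_1}^V$, contradicting saturation in $V$. Note this half needs only $V$-saturation, not Projective CC.

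Consequently all the weight of the theorem—and the only place Projective CC can possibly enter—is the proof of $(\dagger)$. I expect this to be the main obstacle, and it is genuinely necessary rather than a technicality: $(\dagger)$ must fail for arbitrary c.c.c.\ forcing, since otherwise the theorem would not require Projective CC (indeed adding a Cohen real can destroy saturation of $\text{NS}_{\omega_1}$, and by the reduction above this is exactly a failure of $(\dagger)$, i.e.\ a new stationary set with no ground-model stationary core). My approach to $(\dagger)$ would be to fix a name $\dot{A}$ with $p\Vdash\dot{A}$ stationary and reduce to the density statement that below any $r\le p$ there is a $q\le r$ and a $V$-stationary $B$ with $q\Vdash\check{B}\subseteq\dot{A}$ (mod $\dot{\text{NS}}$); meeting the resulting dense set with $G$ yields $(\dagger)$ for the actual extension. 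To produce such $B$ I would feed in Projective CC, either in the canonical-function formulation (Projective CC is equivalent to: for every $g:\omega_1\to\omega_1$ and every stationary $T$ there is $\alpha<\omega_2$ with $\{\delta\in T:g(\delta)<f_\alpha(\delta)\}$ stationary, where $f_\alpha$ is the canonical function of rank $\alpha$), using that the canonical functions are the same mod $\text{NS}$ in $V$ and $V[G]$; or via Projective CC in $V[G]$ together with the master-condition identity $X[G]\cap V=X$, which lifts a Chang-type structure from $V$ to $V[G]$ and back. The delicate point—and the hard part of the whole proof—is that Chang structures are ``thin at $\omega_1$'' (each contributes a single ordinal $X\cap\omega_1$), so one cannot directly read off a \emph{stationary} subset of $\omega_1$ from them; the real work is to assemble the ground-model stationary core $B$ uniformly, which is precisely where the projective (rather than merely stationary) strength of Projective CC, combined with the absoluteness of $\text{NS}$ and of canonical functions for c.c.c.\ extensions, is used.
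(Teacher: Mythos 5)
Your reduction of the theorem to the absorption property $(\dagger)$ is logically valid, but $(\dagger)$ itself is false --- and not for reasons that the hypotheses can repair: it fails outright, in ZFC, for the c.c.c.\ poset $\mathbb{P}=\operatorname{Add}(\omega,\omega_1)$. Let $\langle r_\alpha : \alpha<\omega_1\rangle$ be the generic sequence of Cohen reals and set $A:=\{\alpha<\omega_1 : r_\alpha(0)=0\}$. For \emph{every} stationary $B\subseteq\omega_1$ in $V$, both $\{\alpha\in B : r_\alpha(0)=0\}$ and $\{\alpha\in B : r_\alpha(0)=1\}$ are stationary in $V[G]$: given a condition $p$ and a name $\dot C$ for a club, take a countable $N\prec (H_\theta^V,\in)$ with $p,\dot C,B,\mathbb{P}\in N$ and $\delta:=N\cap\omega_1\in B$; by the c.c.c., $p$ is $(N,\mathbb{P})$-generic and hence forces $\delta\in\dot C$, and since $\operatorname{dom}(p)\subseteq N\cap(\omega_1\times\omega)\subseteq\delta\times\omega$ we may extend $p$ to decide $r_\delta(0)$ either way. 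So $A$ is stationary in $V[G]$, yet no $V$-stationary $B$ has $B\setminus A$ nonstationary, because $B\setminus A=\{\alpha\in B:r_\alpha(0)=1\}$ is stationary. This argument uses nothing about $V$ beyond ZFC, so saturation and Projective CC cannot rescue $(\dagger)$; the ``key lemma'' to which you have reduced the whole theorem is unprovable, and the strategy of extracting ground-model stationary cores must be abandoned rather than patched.

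The paper's proof avoids this by working at the level of the generic ultrapower rather than of individual stationary sets in $V[G]$. By the Kakuda--Magidor special case of Foreman's Duality Theorem (Theorem \ref{thm_SuffPreserveSat}), saturation survives a c.c.c.\ $\mathbb{P}$ provided $\wp(\omega_1)/\text{NS}_{\omega_1}$ forces $\dot\pi(\mathbb{P})$ to be $\omega_2^V$-c.c., where $\dot\pi$ names the generic ultrapower embedding. Projective CC is used exactly once: if some stationary $T$ forces $\dot A$ to be an $\omega_2^V$-sized antichain in $\dot\pi(\mathbb{P})$, one chooses a Chang structure $X\prec(H_\theta,\in,T,\dot A)$ with $X\cap\omega_1\in T$ and $\operatorname{otp}(X\cap\omega_2)=\omega_1$. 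Saturation guarantees that the ultrafilter read off from the transitive collapse of $X$ is genuinely generic over $H_X$, the requirement $X\cap\omega_1\in T$ puts $T_X$ into that generic, and because $\omega_2^{H_X}=\omega_1^V$ the reflected antichain has an enumeration of length $\omega_1^V$; pushing it forward with the factor map $k$ produces an uncountable antichain in $\mathbb{P}$ itself, contradicting the c.c.c. Note that this route never asserts (and, as the example above shows, could not assert) anything like your $(\dagger)$: the chain condition is transferred through the image poset $\dot\pi(\mathbb{P})$, not through a density of $V$-sets in $\wp(\omega_1)^{V[G]}/\text{NS}_{\omega_1}^{V[G]}$.
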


To prove Theorem \ref{thm_IndestSat}, we will need the following special case of Foreman's Duality Theorem (this special case was originally proved independently by Kakuda and Magidor; see Corollary 7.17 of \cite{MattHandbook}):
\begin{theorem}\label{thm_SuffPreserveSat}
Suppose $\text{NS}_{\omega_1}$ is saturated and $\mathbb{P}$ is c.c.c.  Let $\dot{\pi}$ be the $\wp(\omega_1)/\text{NS}_{\omega_1}$-name for the generic ultrapower embedding.  If $\wp(\omega_1)/\text{NS}_{\omega_1}$ forces that $\dot{\pi}(\mathbb{P})$ is $\omega_2^V$-cc in the generic extension of $V$ by $\wp(\omega_1)/\text{NS}_{\omega_1}$, then
\[
V^{\mathbb{P}} \models \  \text{NS}_{\omega_1} \ \text{ is saturated.}
\]
\end{theorem}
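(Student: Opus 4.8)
The plan is to study the generic ultrapower of $V^{\mathbb{P}}$ by $\text{NS}_{\omega_1}$ and factor it, following Foreman's Duality Theorem, through the $\dot\pi$-ultrapower of $V$; the hypothesis on $\dot\pi(\mathbb{P})$ will be used exactly to keep $\omega_2$ from being collapsed. Fix a $\mathbb{P}$-generic $g$, write $V[g]=V^{\mathbb{P}}$, and set $\mathbb{B}:=(\wp(\omega_1)/\text{NS}_{\omega_1})^V$ and $\mathbb{B}^{V[g]}:=(\wp(\omega_1)/\text{NS}_{\omega_1})^{V[g]}$. Since $\mathbb{P}$ is c.c.c.\ it preserves cardinals, so $\omega_1^{V[g]}=\omega_1^V$ and $\omega_2^{V[g]}=\omega_2^V$, and it suffices to prove that $\mathbb{B}^{V[g]}$ is $\omega_2^V$-cc in $V[g]$. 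I would use the standard characterization that a normal ideal on $\omega_1$ is saturated iff it is precipitous and its quotient forcing preserves $\omega_2$; so the two things to establish are that the generic ultrapower of $V[g]$ is well-founded and that $\mathbb{B}^{V[g]}$ preserves $\omega_2^V$.

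First I would prove a descent lemma: if $U$ is $V[g]$-generic for $\mathbb{B}^{V[g]}$, then $U_0:=U\cap V$ is $V$-generic for $\mathbb{B}$. Given a maximal antichain $\mathcal{A}\in V$ of $\mathbb{B}$, saturation in $V$ gives $|\mathcal{A}|\le\omega_1$, so (after enumerating it in order type $\le\omega_1$) its diagonal union $\bigtriangledown\mathcal{A}$ contains a club $C\in V$; as $\mathbb{P}$ is c.c.c.\ it keeps $C$ a club and preserves stationarity, so a Fodor argument in $V[g]$ shows $\mathcal{A}$ stays predense in $\mathbb{B}^{V[g]}$, whence $U$ meets it. Thus $U_0$ is generic, giving embeddings $\pi\colon V\to M=\text{Ult}(V,U_0)$ and $j\colon V[g]\to N=\text{Ult}(V[g],U)$ together with a factor map $k\colon M\to N$ satisfying $k\circ\pi=j\restriction V$; here $M$ is well-founded because $\mathbb{B}$ is saturated, $\pi(\omega_1^V)=\omega_2^V=(\omega_1)^M$, and the common generic seed is $\omega_1^V$.

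The crux, and the step I expect to be hardest, is the factoring: I would show that $N$ is (isomorphic to) a $\pi(\mathbb{P})$-generic extension $M[h]$ of $M$, with $V[g][U]=V[U_0][h]$ and $h$ genuinely $\pi(\mathbb{P})$-generic over the full extension $V[U_0]$, not merely over $M$. The mechanism is a c.c.c.\ covering computation---each $\omega_1^V$-sequence chosen in $V[g]$ agrees $U$-almost-everywhere with one from $V$, because c.c.c.\ confines its values to $V$-coded countable sets and the resulting index function into $\omega$ is constant mod $U$---which lets one replace $V[g]$-objects by $V$-objects modulo $U$ and thereby define $h:=\{[\vec a]_{U_0}\in\pi(\mathbb{P}):\vec a\in V,\ \{\xi:a_\xi\in g\}\in U\}$ and check that it meets every maximal antichain of $\pi(\mathbb{P})$, using that maximal antichains of $\mathbb{P}$ in $V$ are countable. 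Establishing $N\cong M[h]$ yields well-foundedness of $N$, since $M$ is transitive and $\pi(\mathbb{P})$-forcing is set forcing. The delicate points are passing from $M$-genericity of $h$ to genericity over $V[U_0]\supseteq M$, and coping with maximal antichains of $\mathbb{P}$ of size exactly $\omega_1=\mathrm{crit}(\pi)$, which $\pi$ stretches; these are precisely the technical content of the Duality Theorem. I note that well-foundedness is genuinely at issue: the covering computation is valid only once $U$ is known to be countably complete, so the factoring must be arranged so that well-foundedness is \emph{obtained} rather than assumed.

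Finally the hypothesis yields preservation of $\omega_2^V$. Since $\pi(\omega_1^V)=\omega_2^V=(\omega_1)^{V[U_0]}$, the assumption that $\mathbb{B}$ forces $\dot\pi(\mathbb{P})$ to be $\omega_2^V$-cc says exactly that $\pi(\mathbb{P})$ is c.c.c.\ in $V[U_0]$, so forcing $h$ over $V[U_0]$ preserves $\omega_1^{V[U_0]}=\omega_2^V$; hence $\omega_2^V$ remains a cardinal in $V[U_0][h]=V[g][U]$, i.e.\ $\mathbb{B}^{V[g]}$ preserves $\omega_2^{V[g]}$. Combining this with the well-foundedness of $N$ from the factoring, the characterization in the first paragraph shows $\mathbb{B}^{V[g]}$ is $\omega_2$-cc, that is, $\text{NS}_{\omega_1}$ is saturated in $V^{\mathbb{P}}$. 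The main obstacle throughout is the factoring of the third paragraph; the descent lemma and the closing use of the $\omega_2^V$-cc hypothesis are comparatively routine.
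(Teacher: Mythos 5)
First, a point of comparison: the paper does not prove this theorem at all --- it is quoted as a known special case of Foreman's Duality Theorem, due independently to Kakuda and Magidor (Corollary 7.17 of \cite{MattHandbook}) --- so your proposal can only be measured against the standard duality argument. Measured that way, your middle sections are on track: the descent lemma (every maximal antichain of $\wp(\omega_1)/\text{NS}_{\omega_1}$ in $V$ stays predense in $V[g]$ via the diagonal-union club and Fodor) is correct, and your ``covering computation'' is exactly the Kakuda--Magidor mechanism: since $\mathbb{P}$ is c.c.c., each antichain $[\langle \mathcal{A}_\xi \rangle]_{U_0}$ of $\pi(\mathbb{P})$ in $M$ comes from $\omega_1$-many countable antichains of $\mathbb{P}$ in $V$, the index of the $g$-chosen element is a function $\omega_1 \to \omega$ in $V[g]$, and genericity of $U$ for the normal quotient of $V[g]$ puts one fiber $f^{-1}(n^*)$ in $U$, yielding a $V$-sequence whose $U_0$-class lies in $h$. (One remark: genericity of $h$ over $V[U_0]$, not just over $M$, is itself where the $\omega_2^V$-cc hypothesis earns its keep --- it makes $\pi(\mathbb{P})$ c.c.c.\ in $V[U_0]$, so its maximal antichains there are countable and hence, by the countable closure of $M$ in $V[U_0]$ granted by saturation, lie in $M$ --- so the hypothesis does more than ``keep $\omega_2$ from being collapsed.'')

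The genuine gap is your closing inference. You propose to conclude via ``the standard characterization that a normal ideal on $\omega_1$ is saturated iff it is precipitous and its quotient forcing preserves $\omega_2$.'' That is not the standard characterization: precipitousness plus $\omega_2$-preservation of the quotient is the Baumgartner--Taylor characterization of \emph{presaturation}, which is strictly weaker than saturation; saturation is, by definition, the $\omega_2$-chain condition of $\wp(\omega_1)/\text{NS}_{\omega_1}$, and no preservation property of the quotient converts into a chain condition (the survey itself treats the two notions as distinct in Section~\ref{sec_Presat}). So as written, your argument establishes only that $\text{NS}_{\omega_1}$ is presaturated in $V[g]$, not saturated. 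The fix uses the machinery you already built, but as a chain-condition transfer rather than a preservation argument: your factoring $V[g][U]=V[U_0][h]$ amounts to a dense embedding of $\mathbb{P} * \dot{\mathbb{B}}^{V[g]}$ into $\mathbb{B} * \dot{\pi}(\mathbb{P})$ (this is the content of the duality isomorphism of boolean completions). The latter iteration is $\omega_2^V$-cc in $V$, since $\mathbb{B}$ is $\omega_2$-cc by saturation and forces $\dot{\pi}(\mathbb{P})$ to be $\omega_2^V$-cc, and a two-step iteration of $\kappa$-cc posets is $\kappa$-cc for regular $\kappa$. Then if some $p$ forced $\langle \dot{S}_\alpha : \alpha < \omega_2 \rangle$ to be an antichain in $\dot{\mathbb{B}}^{V[g]}$, the conditions $(p,\dot{S}_\alpha)$ would form an $\omega_2$-sized antichain in $\mathbb{P} * \dot{\mathbb{B}}^{V[g]}$, a contradiction; this is what actually yields $\omega_2$-cc, i.e.\ saturation, in $V^{\mathbb{P}}$.
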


We now return to the proof of the Foreman-Magidor-Shelah Theorem \ref{thm_IndestSat}:
\begin{proof}
Let $\mathbb{P}$ be c.c.c.  By Theorem \ref{thm_SuffPreserveSat}, it suffices to show that $\wp(\omega_1)/\text{NS}_{\omega_1}$ forces that $\dot{\pi}(\mathbb{P})$ is $\omega_2^V$-cc.  Suppose toward a contradiction that $T$ is a stationary subset of $\omega_1$, $\dot{A}$ is a $\wp(\omega_1)/\text{NS}_{\omega_1}$-name, and
\[
T \Vdash_{\wp(\omega_1)/\text{NS}_{\omega_1}} \ \dot{A} \text{ is an } \omega_2^V\text{-sized antichain in } \dot{\pi}(\mathbb{P}).
\]

By Projective CC, there is an 
\[
X \prec (H_\theta,\in,T,\dot{A})
\]
such that $X \cap \omega_1 \in T$ and $\text{otp}(X \cap \omega_2) = \omega_1$.  Let $\sigma: H_X \to X \prec H_\theta$ be the inverse of the transitive collapsing map of $X$, and let $(\dot{A}_X,\mathbb{P}_X, T_X):= \sigma^{-1}(\dot{A},\mathbb{P},T)$.  Let $\delta:= \text{crit}(\sigma)$; note $\delta = \omega_1^{H_X}$. Since $\text{NS}_{\omega_1}$ is saturated, $X$ catches all of its antichains; this is similar to the argument of the \eqref{item_Sat} implies \eqref{item_CatchAll} direction of Lemma \ref{lem_Foreman_CharSat}.  It follows that
\[
U:= \{ A \in \wp^{H_X}\big( \delta \big) \ : \  \delta \in \sigma(A)  \}
\]
is \emph{generic} over $H_X$ for $\sigma^{-1}\big( \wp(\omega_1)/\text{NS}_{\omega_1} \big)$.  

Let $\pi_U: H_X \to_U N_U$ be the ultrapower of $H_X$ by $U$; by standard arguments, the map $k$ defined by 
\[
k \big(\pi_U(f)(\delta)\big):= \sigma(f)(\delta)
\]
(for any $f \in H_X \cap {}^{\delta} H_X$) is a well-defined, elementary map from $N_U \to H_\theta$, and has the property that $\sigma = k \circ \pi_U$.

Now since $U$ is generic over $H_X$, $H_X[U]$ sees the map $\pi_U$, and believes that it is a generic ultrapower.  Furthermore, since $X \cap \omega_1 \in T$, $T_X \in U$, and so $H_X[U]$ believes that $A:=(\dot{A}_X)_U$ is an antichain in 
\[
\pi_U (\mathbb{P}_X) = k^{-1}(\mathbb{P})
\]
of size $\aleph_2$.  Note that since $X \cap \omega_2$ has ordertype $\omega_1$, $\omega_1^V = \omega_2^{H_X}$.  So, from the point of view of $H_X[U]$, $A$ is an antichain in $k^{-1}(\mathbb{P})$ that has an enumeration of length $\omega_1^V = \omega_2^{H_X}$.  Now although $A$ is not an element of $N_U$, it is a subset of $N_U$, and distinct conditions from $A$ are incompatible in $k^{-1}(\mathbb{P})$.  Then by elementarity of $k: N_U \to H_\theta$, $k " A$ is a collection of pairwise incompatible elements of $\mathbb{P}$.  But $k " A$ has size $\omega_1$ in $V$, contradicting that $\mathbb{P}$ is c.c.c.
\end{proof}

For the next theorem we need to introduce a stronger concept of saturation.  Note that if $\text{NS}_{\omega_1}$ is saturated, then for any $\omega_2$-sized collection $\mathcal{S}$ of stationary subsets of $\omega_1$, there is a pair of distinct members of $\mathcal{S}$ whose intersection is stationary.  We say that $\text{NS}_{\omega_1}$ is $\boldsymbol{(\omega_2,\omega_1, < \omega)}$\textbf{-saturated} if it satisfies the following stronger requirement:  whenever $\mathcal{S}$ is an $\omega_2$-sized collection of stationary subsets of $\omega_1$, there is an $\omega_1$-sized subcollection $\mathcal{S}_0 \subset \mathcal{S}$ such that for every finite $X \subset \mathcal{S}_0$, $\bigcap X$ is stationary.  

We will make use of the following well-known lemma:
\begin{lemma}\label{lem_CBA}
If $\text{NS}_{\omega_1}$ is saturated, then $\wp(\omega_1)/\text{NS}_{\omega_1}$ is a complete boolean algebra.
\end{lemma}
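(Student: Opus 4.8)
The plan is to lean on the two standard features of a saturated normal ideal: saturation keeps antichains of size at most $\omega_1$, while normality (via Fodor's Lemma) guarantees that an $\omega_1$-indexed family has a least upper bound, realized concretely as a diagonal union. Since $\wp(\omega_1)/\text{NS}_{\omega_1}$ is automatically a Boolean algebra (being the quotient of a powerset algebra by an ideal), the only thing to verify is completeness, i.e.\ that every subset has a supremum.

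First I would record the key normality fact: for any sequence $\langle T_i : i < \omega_1 \rangle$ of subsets of $\omega_1$, the class of the diagonal union $\bigtriangledown_{i<\omega_1} T_i := \{ \xi < \omega_1 : \exists i < \xi \ \xi \in T_i \}$ is the supremum of $\{ [T_i] : i < \omega_1 \}$ in $\wp(\omega_1)/\text{NS}_{\omega_1}$. Indeed $[T_i] \le [\bigtriangledown_j T_j]$ holds because $T_i \setminus (i+1) \subseteq \bigtriangledown_j T_j$ while $T_i \cap (i+1)$ is bounded, hence nonstationary; and if some $[U]$ dominated every $[T_i]$ yet $(\bigtriangledown_j T_j) \setminus U$ were stationary, then applying Fodor's Lemma to the regressive map sending $\xi$ to the least $i < \xi$ with $\xi \in T_i$ would produce a single $i_0$ with $T_{i_0} \setminus U$ stationary, contradicting $[T_{i_0}] \le [U]$.

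Now, given an arbitrary subset $\mathcal{X} \subseteq \wp(\omega_1)/\text{NS}_{\omega_1}$, let $\mathcal{C}$ be the collection of nonzero elements lying below some member of $\mathcal{X}$, and use Zorn's Lemma to fix a maximal antichain $\mathcal{A} \subseteq \mathcal{C}$. By saturation $|\mathcal{A}| \le \omega_1$, so I can enumerate $\mathcal{A} = \{ [T_i] : i < \omega_1 \}$ (padding with $\emptyset$ if necessary) and set $T^* := \bigtriangledown_{i<\omega_1} T_i$; I claim $[T^*] = \sup \mathcal{X}$. For the upper-bound half, if some $[S] \in \mathcal{X}$ satisfied $[S] \not\le [T^*]$, then $S \setminus T^*$ would be stationary and $(S \setminus T^*) \cap T_i \subseteq i+1$ would be bounded for every $i$; thus $[S \setminus T^*]$ is a nonzero element of $\mathcal{C}$ incompatible with every member of $\mathcal{A}$, contradicting maximality. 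For the least-upper-bound half, any upper bound $[U]$ of $\mathcal{X}$ dominates each $[T_i]$ (as each $[T_i]$ lies below some member of $\mathcal{X}$), and therefore dominates their supremum $[T^*]$ by the normality fact of the previous paragraph.

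The argument is classical and presents no serious obstacle; the only delicate point is the interplay in the last paragraph between saturation (invoked exactly once, to cut $\mathcal{A}$ down to an $\omega_1$-indexed family so that the diagonal-union machinery applies) and normality (used to convert that family into an honest supremum). With these two ingredients in hand, completeness follows, establishing Lemma \ref{lem_CBA}.
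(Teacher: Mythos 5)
Your proof is correct and follows essentially the same route as the paper's: saturation bounds a maximal antichain by $\omega_1$, and the diagonal union of an $\omega_1$-length enumeration of that antichain represents the supremum. The one point where you are more careful than the paper's sketch is in taking the maximal antichain inside the set of nonzero elements lying \emph{below} members of $\mathcal{X}$ rather than literally inside $\mathcal{X}$; this is the right move, since an antichain contained in $\mathcal{X}$ itself need not be predense below $\mathcal{X}$ when $\mathcal{X}$ contains compatible but $\le$-incomparable elements.
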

\begin{proof}
Let $X$ be a collection of stationary subsets of $\omega_1$, and let $\mathcal{A}$ be a $\subseteq$-maximal antichain contained in $X$.  By saturation, $|\mathcal{A}| \le \omega_1$.  If the cardinality of $\mathcal{A}$ is exactly $\omega_1$, it is routine to show that ``the" diagonal union of $\mathcal{A}$ (using any $\omega_1$-length enumeration of $\mathcal{A}$) represents the least upper bound of $X$ in $\wp(\omega_1)/\text{NS}_{\omega_1}$.  If $|\mathcal{A}|<\omega_1$ then the union of $\mathcal{A}$ serves the same purpose.
\end{proof}

If $X$ is a collection of stationary subsets of $\omega_1$ that has a least upper bound in $\wp(\omega_1)/\text{NS}_{\omega_1}$, then we will denote this least upper bound by $\sum X$.

\begin{theorem}[Larson; cf.\ Lemma 3.11 of Dow-Tall~\cite{MR3744886}; see also Garti et al~\cite{GartiEtAl} where a slightly stronger assumption was used]\label{thm_Larson_VerySat}
Suppose $\text{NS}_{\omega_1}$ is saturated, and Projective CC holds.  Then in fact $\text{NS}_{\omega_1}$ is $(\omega_2,\omega_1, < \omega)$-saturated. 
\end{theorem}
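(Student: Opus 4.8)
The plan is to reflect the family $\mathcal S=\{S_\alpha : \alpha<\omega_2\}$ (enumerated injectively) through a Chang-type elementary submodel supplied by Projective CC, and then to use the resulting generic filter on $\wp(\omega_1)/\text{NS}_{\omega_1}$ as a device for producing stationary \emph{finite} intersections ``for free''. Since $\text{NS}_{\omega_1}$ is saturated, Lemma \ref{lem_CBA} makes $\mathbb B:=\wp(\omega_1)/\text{NS}_{\omega_1}$ a complete, $\omega_2$-cc Boolean algebra, so I may form the decreasing sequence of tail sums $\hat e_\gamma:=\sum_{\beta\ge\gamma}[S_\beta]$ for $\gamma<\omega_2$. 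The crucial preliminary observation is that this sequence stabilizes: the drops $\hat e_\gamma\setminus\hat e_{\gamma+1}$ at successors together with the drops $\big(\inf_{\gamma<\lambda}\hat e_\gamma\big)\setminus\hat e_\lambda$ at limits form an antichain, so by the $\omega_2$-cc only $\omega_1$-many are nonzero, and since $\omega_2$ is regular there is a $\gamma^*<\omega_2$ below which all drops occur, giving $\hat e_\gamma=\hat e_{\gamma^*}=:\hat e_\infty$ for every $\gamma\ge\gamma^*$. As $\hat e_\infty\ge[S_{\gamma^*}]\ne 0$, the element $\hat e_\infty$ is nonzero and has a stationary representative $E_\infty\subseteq\omega_1$.

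Next I would apply Projective CC with $T:=E_\infty$ to the structure $(H_\theta,\in,\Delta,\vec S,E_\infty,\gamma^*)$, obtaining $X\prec(H_\theta,\in,\dots)$ with $\operatorname{otp}(X\cap\omega_2)=\omega_1$ and $\delta:=X\cap\omega_1\in E_\infty$. Let $\sigma:\bar H\to X$ be the inverse of the transitive collapse, so $\delta=\omega_1^{\bar H}=\operatorname{crit}(\sigma)$ and $\omega_2^{\bar H}=\operatorname{otp}(X\cap\omega_2)=\omega_1$. Exactly as in the proof of Theorem \ref{thm_IndestSat}, saturation guarantees that $X$ catches every maximal antichain it contains (cf.\ the \eqref{item_Sat}$\implies$\eqref{item_CatchAll} direction of Lemma \ref{lem_Foreman_CharSat}), and this is precisely the assertion that $U:=\{A\in\wp^{\bar H}(\delta) : \delta\in\sigma(A)\}$ is an $\bar H$-generic filter on $\mathbb B^{\bar H}$. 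Writing $\bar b_\beta:=\sigma^{-1}(\vec S)(\beta)$, one has $\bar b_\beta\in U\iff\delta\in S_{\sigma(\beta)}$.

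The heart of the argument is to show that $B:=\{\beta<\omega_2^{\bar H} : \bar b_\beta\in U\}$ is unbounded in $\omega_2^{\bar H}$, hence of size $\omega_2^{\bar H}=\omega_1$. Fix $\gamma\ge\sigma^{-1}(\gamma^*)$ and put $e_\gamma:=\sum^{\bar H}_{\beta\ge\gamma}\bar b_\beta$, computed in $\bar H$ (legitimate since $\bar H\models$ ``$\mathbb B$ is complete''). By elementarity $\sigma$ sends the element $e_\gamma$ to $\hat e_{\sigma(\gamma)}$, and $\sigma(\gamma)\ge\gamma^*$ gives $\hat e_{\sigma(\gamma)}=\hat e_\infty=[E_\infty]$. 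Thus the representative $\sigma(e_\gamma)$ and $E_\infty$ are equal modulo $\text{NS}_{\omega_1}$, so $\sigma(e_\gamma)\,\triangle\,E_\infty$ is a nonstationary set lying in $X$, whence (as in Lemma \ref{lem_McapOmega1}) $\delta\notin\sigma(e_\gamma)\,\triangle\,E_\infty$. Since $\delta\in E_\infty$ we conclude $\delta\in\sigma(e_\gamma)$, i.e.\ $e_\gamma\in U$. Because $\{\bar b_\beta : \beta\ge\gamma\}$ is predense below $e_\gamma$ in $\bar H$ and $U$ is generic, $U$ must contain some $\bar b_\beta$ with $\beta\ge\gamma$; as $\gamma$ ranged cofinally below $\omega_2^{\bar H}$, the set $B$ is unbounded.

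Finally I would set $\mathcal S_0:=\{S_{\sigma(\beta)} : \beta\in B\}$, an element of $V$ of size $|B|=\omega_1$. Given finitely many $\beta_1,\dots,\beta_n\in B$, each $\bar b_{\beta_i}\in U$, so $\bar b_{\beta_1}\wedge\cdots\wedge\bar b_{\beta_n}\in U$ is nonzero, i.e.\ $\bar H\models$ ``$\bigcap_i\bar b_{\beta_i}$ is stationary''; applying $\sigma$ yields that $\bigcap_i S_{\sigma(\beta_i)}$ is stationary in $V$, which is exactly the required $(\omega_2,\omega_1,<\omega)$-saturation. I expect the size of $B$ to be the main obstacle: simply reflecting $\mathcal S$ into a Chang model and intersecting with $X\cap\omega_2$ yields $\omega_1$ candidate sets, but gives no control over how many of their traces the ultrafilter $U$ captures. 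The stabilization of the tail sums, combined with the deliberate choice $T=E_\infty$ that forces $\delta$ into the eventual value $\hat e_\infty$, is what converts genericity into unboundedness of $B$; obtaining a generic $U$ at all is where saturation (via antichain catching) is indispensable, while the Chang property $\operatorname{otp}(X\cap\omega_2)=\omega_1$ is what makes $|B|$ exactly $\omega_1$ rather than merely countable.
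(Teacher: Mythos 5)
Your proof is correct and takes essentially the same route as the paper's: stabilize the tail sums $\sum_{\beta\ge\gamma}[S_\beta]$ using saturation and completeness of $\wp(\omega_1)/\text{NS}_{\omega_1}$, apply Projective CC with $T$ equal to (a representative of) the stabilized value, extract $\omega_1$-many indices $i$ with $X\cap\omega_1\in S_i$, and get stationarity of finite intersections from elementarity. The only difference is packaging: where you pass to the transitive collapse and use genericity of $U$ together with predensity of $\{\bar b_\beta : \beta\ge\gamma\}$ below $e_\gamma$, the paper argues directly inside $X$ that $X\cap\omega_1$ lies in each tail sum (since its nonstationary difference from $T^*$ is an element of $X$) and hence in some $S_i$ with $i\in X\cap[\gamma,\omega_2)$ --- the same catching argument without the forcing-theoretic dress.
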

\begin{proof}
Let $\mathcal{S}$ be an $\omega_2$-sized collection of stationary subsets of $\omega_1$; fix a one-to-one  enumeration $\vec{S}=\langle S_i \ : \ i < \omega_2 \rangle$ of $\mathcal{S}$.  For each $i < \omega_2$, let 
\[
T_i:= \sum \{ S_j \ : \ j \in [i,\omega_2) \} 
\]
Such least upper bounds exist by saturation of $\text{NS}_{\omega_1}$ and Lemma \ref{lem_CBA}.  Then $\langle T_i \ : \ i < \omega_2 \rangle$ is a descending sequence mod $\text{NS}_{\omega_1}$; so again by saturation of $\text{NS}_{\omega_1}$, it must stabilize; so there is some $i_0 < \omega_2$ such that $T_i =_{\text{NS}} T_{i_0}$ for all $i \ge i_0$.   Let $T^*:= T_{i_0}$; then 
\begin{equation}\label{eq_CompareAllToTstar}
\forall i \in [i_0,\omega_2) \ \ \ T^* =_{\text{NS}} \sum \{ S_j \ : \ j \in [i,\omega_2)  \}.
\end{equation}
By Projective CC, there is an
\[
X \prec (H_\theta,\in, \vec{S}, i_0, T^*)
\]
such that $\text{otp}(X \cap \omega_2)=\omega_1$ and $X \cap \omega_1 \in T^*$.

\begin{nonGlobalClaim}\label{clm_Omega1ManyInX}
There $\omega_1$-many $i \in X \cap \omega_2$ such that $X \cap \omega_1 \in S_i$. 
\end{nonGlobalClaim}
\begin{proof}
(of Claim \ref{clm_Omega1ManyInX})  Note that since $i_0 \in X$, $\text{otp}(X \cap \omega_2) =\omega_1$,  and $\vec{S}$ is a one-to-one enumeration, it suffices to show that for every $\gamma \in X \cap \omega_2$ such that $\gamma \ge i_0$, there is an $i \in X$ above $\gamma$ such that $X \cap \omega_1 \in S_i$.  So fix such a $\gamma$.  Then by \eqref{eq_CompareAllToTstar},
\begin{equation}\label{eq_TstarEqual}
T^* =_{\text{NS}} \ \sum \{ S_j \ : \ j \in [\gamma,\omega_2) \}.
\end{equation}
Furthermore, since $\gamma \in X$, the boolean sum on the right side of the equation is an element of $X$.  $T^*$ is also an element of $X$, by choice of $X$.  Hence the set difference
\[
T^* \ \setminus \ \sum \{ S_j \ : \ j \in [\gamma,\omega_2) \},
\]
which is nonstationary by \eqref{eq_TstarEqual}, is also an element of $X$.  It follows that $X \cap \omega_1$ cannot lie in this set difference.  But also $X \cap \omega_1 \in T^*$, by choice of $X$.  Hence
\begin{equation}\label{eq_Xcapomega_1}
X \cap \omega_1 \in \sum \{ S_j \ : \ j \in [\gamma,\omega_2) \}.
\end{equation}
Since $\vec{S}$ and $\gamma$ are elements of $X$, $\langle S_j \ : \ j \in [\gamma,\omega_2) \rangle$ is also an element of $X$.  It follows from this and \eqref{eq_Xcapomega_1} that there is some $i \in [\gamma,\omega_2) \cap X$ such that $X \cap \omega_1 \in S_i$.

\end{proof}

Let $I$ be the $\omega_1$-sized collection of indices from $X$ given by Claim \ref{clm_Omega1ManyInX}.  Consider any finite collection $i_0 < i_1, < \dots < i_n$ from $I$.  Then $S:=S_{i_0} \cap \dots \cap S_{i_n}$ is an element of $X$, and $X \cap \omega_1 \in S$.  It follows from Lemma \ref{lem_McapOmega1} that $S$ is stationary.
\end{proof}

\section{What about adjoining objects to uncountable models?}\label{sec_Higher}

This section is mostly about results of Foreman and Magidor, showing that higher versions of SCC and WRP are inconsistent.  We attempt to streamline their proof, while also highlighting the role of (the ulimately inconsistent) principle $\text{SCC}(\omega_2)$ in their arguments.

\subsection{Negative results}

The following theorem of Shelah is stated in a slightly unusual form:
\begin{theorem}[Shelah]\label{thm_Shelah_LargestCof}
Suppose $H$ is a transitive $\text{ZFC}^-$ model, $\mu \in H$ is a cardinal in $V$, $\mu^{++H}$ exists and is a cardinal in $V$, but $\mu^{+H}$ is not a cardinal in $V$.  Then $\text{cf}^V(\mu^{+H}) = \mu$. 
\end{theorem}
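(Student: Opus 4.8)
The plan is to first pin down the $V$-cardinality of $\mu^{+H}$ and the exact identity of $\mu^{++H}$, thereby reducing the whole statement to a single cofinality computation, and then to attack that computation by contradiction.

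First I would establish two elementary facts. For the first, note that $\mu \subseteq \mu^{+H}$ and $\mu$ is a $V$-cardinal, so $|\mu^{+H}|^V \ge \mu$; since $\mu^{+H}$ is not a $V$-cardinal, $|\mu^{+H}|^V$ is some $V$-cardinal $\theta$ with $\mu \le \theta < \mu^{+H}$. But any ordinal $\alpha$ with $\mu \le \alpha < \mu^{+H}$ has $|\alpha|^H \le \mu$, and the witnessing surjection lies in $H \subseteq V$; hence $\theta = |\theta|^V \le \mu$, so $|\mu^{+H}|^V = \mu$. In particular $\text{cf}^V(\mu^{+H}) \le \mu$. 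For the second fact, the same computation shows every $\alpha < \mu^{++H}$ has $|\alpha|^V \le \mu$ (for $\alpha \in [\mu^{+H},\mu^{++H})$, compose an $H$-surjection $\mu^{+H} \to \alpha$ with the one just produced). Thus there are no $V$-cardinals in the open interval $(\mu,\mu^{++H})$, and since $\mu^{++H}$ is itself a $V$-cardinal above $\mu$, it must equal $(\mu^+)^V$, which is regular in $V$.

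This reduces the theorem to the reverse inequality $\text{cf}^V(\mu^{+H}) \ge \mu$. I would argue by contradiction: suppose $\rho := \text{cf}^V(\mu^{+H}) < \mu$, and fix in $V$ a surjection $g : \mu \to \mu^{+H}$ together with an increasing cofinal sequence $\langle \kappa_\xi : \xi < \rho\rangle$ in $\mu^{+H}$. In $H$ fix a sequence $\langle e_\alpha : \mu^{+H} \le \alpha < \mu^{++H}\rangle \in H$ of surjections $e_\alpha : \mu^{+H} \to \alpha$; transporting these through $g$ produces, in $V$, a surjection $\mu \to \alpha$ for every $\alpha < \mu^{++H}$, and for each such $\alpha$ the map $\xi \mapsto \sup e_\alpha[\kappa_\xi]$ is a non-decreasing cofinal map $\rho \to \alpha$. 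The aim is to use this uniformly definable system of short cofinal sequences below the \emph{regular} $V$-cardinal $\mu^{++H} = (\mu^+)^V$ to derive a contradiction.

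The main obstacle is precisely this last step. A naive cardinality count only bounds $\mu^{++H}$ by $\mu^{\rho}$, which is entirely consistent with $\mu^{++H} = (\mu^+)^V$ and yields nothing; and one cannot hope to collapse $\mu^{++H}$ outright, since it is a preserved $V$-cardinal, nor does ``cofinally many $\alpha<\mu^{++H}$ have small $V$-cofinality'' by itself contradict its regularity. The genuine content is that a successor $\mu^{++H} = (\mu^{+H})^{+H}$ as computed in $H$ cannot survive as a $V$-cardinal once $\mu^{+H}$ acquires cofinality strictly below $\mu$ while $\mu$ itself remains a cardinal; making this rigidity precise is a covering/pcf-type argument in the spirit of Shelah's PCF theory, and this is where the weight of the proof lies. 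Carrying it out rules out $\rho < \mu$, so $\text{cf}^V(\mu^{+H}) = \mu$ (which, as a byproduct, forces $\mu$ to be regular in $V$).
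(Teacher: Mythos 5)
Your preliminary reductions are correct, and they are exactly the ones the theorem implicitly requires: $|\mu^{+H}|^V = \mu$ (hence $\text{cf}^V(\mu^{+H}) \le \mu$), and $\mu^{++H} = (\mu^+)^V$ is regular in $V$, so that everything comes down to excluding $\rho := \text{cf}^V(\mu^{+H}) < \mu$. But the proposal stops exactly where the theorem begins. You fix surjections $e_\alpha : \mu^{+H} \to \alpha$ and the induced short cofinal maps into each $\alpha < \mu^{++H}$, correctly observe that this by itself yields nothing (every ordinal below $(\mu^+)^V$ already has $V$-cardinality at most $\mu$, so small cofinalities there are unremarkable), and then announce that the ``genuine content'' is a ``covering/pcf-type argument'' that you do not carry out. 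That is a genuine gap, and it is the entire content of the theorem: the steps you do prove are routine, and the one step you do not prove is the statement itself. Moreover, your setup (a system of short cofinal maps below a regular cardinal) is, as you yourself note, a dead end; the actual proof does not proceed from there.

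The missing ingredient is concrete and is the one the text points to: a \emph{strongly almost disjoint family}. Since $H \models \text{ZFC}^-$ and $\mu^{++H}$ exists in $H$, one constructs in $H$ (by a standard transfinite recursion, or by coding the branches $\{ f_\alpha \restriction \xi \ : \ \xi < \mu^{+H} \}$ of bijections $f_\alpha : \mu^{+H} \to \alpha$, which pairwise agree only on a proper initial segment) a family of $\mu^{++H}$ many unbounded subsets of $\mu^{+H}$ whose pairwise intersections are bounded in $\mu^{+H}$. Being such a family is upward absolute to $V$, where $\mu^{+H}$ has become a set of cardinality $\mu$ of cofinality $\rho$. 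Shelah's combinatorial argument (Lemma 23.19 of Jech's \emph{Set Theory}, which is what the text cites) shows that if $\rho < \mu$ then no such family can have size $(\mu^+)^V = \mu^{++H}$; it is only here that the hypothesis that $\mu^{++H}$ remains a (regular) cardinal in $V$ does its work. I would add that this is an elementary, pre-pcf almost-disjoint-sets argument, so describing the missing step as ``in the spirit of Shelah's PCF theory'' both misidentifies it and does not discharge it. As written, the proposal establishes only the easy inequality $\text{cf}^V(\mu^{+H}) \le \mu$.
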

The proof is basically the same as Shelah's original proof; using that $H$ is a $\text{ZFC}^-$ model that believes $\mu^{++}$ exists, $H$ has a strongly almost disjoint, $\mu^{++H}$-sized family of subsets of $\mu^{+H}$, and this is upward absolute to $V$.  Shelah's argument then shows that $\mu^{+H}$ cannot have cofinality strictly less than $\mu$ (see Lemma 23.19 of \cite{MR1940513}).

\begin{theorem}[Foreman-Magidor~\cite{MR1359154}]\label{thm_FM_Chang}
There is an $F:[\omega_3]^{<\omega} \to \omega_3$ such that whenever $X \subset \omega_3$ is closed under $F$, $|X| = \omega_2$, and $X \cap \omega_2$ is an ordinal in the interval $(\omega_1,\omega_2)$, then $X \cap \omega_2$ is $\omega_1$-cofinal.
\end{theorem}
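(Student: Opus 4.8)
The plan is to manufacture a single function $F:[\omega_3]^{<\omega}\to\omega_3$ that internalizes the Skolem functions of a rich structure, and then to read the conclusion off of the collapsing map together with Shelah's Theorem \ref{thm_Shelah_LargestCof}. First I would fix a regular $\theta>\omega_3$, a wellorder $\Delta$ of $H_\theta$, and set $\mathfrak{A}=(H_\theta,\in,\Delta)$, which has definable Skolem functions and models $\text{ZFC}^-$. Since the Skolem terms of $\mathfrak{A}$ are closed under composition, the ordinals below $\omega_3$ lying in $\text{Hull}^{\mathfrak{A}}(X)$ (for $X\subseteq\omega_3$) are exactly the values $\tau^{\mathfrak{A}}(\vec\alpha)$ with $\tau$ a Skolem term, $\vec\alpha$ a tuple from $X$, and $\tau^{\mathfrak{A}}(\vec\alpha)\in\omega_3$. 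By a routine encoding of the countably many ordinal-valued Skolem functions of $\mathfrak{A}$ into one function on $[\omega_3]^{<\omega}$, there is a single $F$ whose closure points $X$ are precisely those with $\text{Hull}^{\mathfrak{A}}(X)\cap\omega_3=X$. This bookkeeping is the one genuinely fiddly ingredient, and is the step I expect to require the most care; but it is the standard device for turning countably many finitary functions into $\omega_3$ into a single $F$, so I would isolate it as a coding lemma rather than carry out the details.

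Now suppose $X$ is closed under $F$ with $|X|=\omega_2$ and $\delta:=X\cap\omega_2\in(\omega_1,\omega_2)$. Put $Y:=\text{Hull}^{\mathfrak{A}}(X)$, so $Y\prec\mathfrak{A}$, $|Y|=\omega_2$, and $Y\cap\omega_3=X$. The crucial observation is that $\text{otp}(X)$ is forced to equal $\omega_2$. Indeed, for each $\gamma<\omega_3$ we have $|\gamma|\le\omega_2$, so $\mathfrak{A}$ $\Delta$-definably chooses an injection $r_\gamma:\gamma\to\omega_2$; hence for every $\gamma\in X$ the map $r_\gamma$ sends $X\cap\gamma$ injectively into $X\cap\omega_2=\delta$, whence $|X\cap\gamma|\le|\delta|=\omega_1$. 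Since every proper initial segment of $X$ then has size at most $\omega_1$ while $|X|=\omega_2$, it follows that $\text{otp}(X)=\omega_2$. (Note that $\delta$ is a limit ordinal, since $\mathfrak{A}$ computes ordinal successors, and that $\delta$ is not a cardinal in $V$ because $\omega_1<\delta<\omega_2$.)

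Let $\pi:Y\to N$ be the transitive collapse. Since $\omega_1\subseteq\delta\subseteq X\subseteq Y$ we get $\pi(\omega_1)=\omega_1$, so $\omega_1^N=\omega_1$; moreover $\pi(\omega_2)=\text{otp}(Y\cap\omega_2)=\text{otp}(\delta)=\delta$ and, by the previous paragraph, $\pi(\omega_3)=\text{otp}(Y\cap\omega_3)=\text{otp}(X)=\omega_2$. Thus $N$ is a transitive $\text{ZFC}^-$ model with $\omega_1^N=\omega_1$, $(\omega_1^{+})^N=\delta$, and $(\omega_1^{++})^N=\omega_2$. I would then apply Shelah's Theorem \ref{thm_Shelah_LargestCof} with $\mu=\omega_1$ and $H=N$: here $\mu=\omega_1$ is a cardinal in $V$, $\mu^{++H}=\omega_2$ exists and is a cardinal in $V$, while $\mu^{+H}=\delta$ is not a cardinal in $V$. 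The theorem then yields $\text{cf}^V(\delta)=\omega_1$, i.e.\ $X\cap\omega_2$ is $\omega_1$-cofinal, as required. The idea that makes Shelah's theorem applicable at all is the use of the definable injections $r_\gamma$ to force $\text{otp}(X)=\omega_2$, so that $\omega_3$ collapses onto the genuine cardinal $\omega_2$ rather than onto a non-cardinal of ordertype $>\omega_2$.
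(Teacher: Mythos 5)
Your proposal is correct and follows essentially the same route as the paper's proof: both pass to the Skolem hull $Y$ of $X$ in a large $(H_\theta,\in,\Delta)$, use definable injections/surjections to show $\operatorname{otp}(Y\cap\omega_3)=\omega_2$, transitively collapse, and apply Shelah's Theorem \ref{thm_Shelah_LargestCof} with $\mu=\omega_1$ to the collapsed model. The only difference is presentational: the paper argues by contradiction from weak stationarity of the set of bad $X$ (so Lemma \ref{lem_DontEscapeMcapW} supplies the hull condition and no explicit coding of Skolem functions into a single $F$ is needed), whereas you build $F$ directly via the standard coding device you correctly isolate.
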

\begin{proof}
If there were no such $F$, then there would be (weakly) stationarily many $X \subset \omega_3$ such that $|X|=\omega_2$ and $X \cap \omega_2$ is an $\omega$-cofinal ordinal in $(\omega_1,\omega_2)$.  Let $S$ denote this stationary set.  By Lemma \ref{lem_DontEscapeMcapW}, there exists a
\[
Y \prec (H_{\omega_4},\in)
\]
such that $Y \cap \omega_3 \in S$.  Fix such a $Y$.  Since $Y \cap \omega_3 \in S$, then by definition of $S$, it follows that
\begin{equation}\label{eq_YcapOmega_2}
Y \cap \omega_2 \text{ is an } \omega \text{-cofinal ordinal in } (\omega_1,\omega_2).
\end{equation}
   Now $|Y \cap \omega_3|=\omega_2$, but in fact $Y \cap \omega_3$ must have ordertype exactly (i.e.\ no larger than) $\omega_2$.\footnote{To prove this, consider an arbitrary $\eta \in Y \cap \omega_3$.  Since $Y \prec (H_{\omega_4},\in)$, there is a surjection $f: \omega_2 \to \eta$ with $f \in Y$, and hence $Y \cap \eta = f[Y \cap \omega_2]$; the latter set has cardinality $\omega_1$.  In short, every proper initial segment of $Y \cap \omega_3$ has cardinality, and hence ordertype, $<\omega_2$.}

Let $\sigma: H_Y \to H_{\omega_4}$ be the inverse of the transitive collapse of $Y$.  The calculations above regarding $Y$'s trace on $\omega_3$ imply that 
\[
\omega_1^V = \omega_1^{H_Y} < \omega_2^{H_Y} = \text{crit}(\sigma) < \omega_3^{H_Y} = \omega_2^V.
\]
 Theorem \ref{thm_Shelah_LargestCof} implies that $\omega_2^{H_Y}=Y \cap \omega_2$ is $\omega_1$-cofinal, contradicting \eqref{eq_YcapOmega_2}.
\end{proof}

\begin{corollary}\label{cor_SCC_omega2_incons}
$\text{SCC}(\omega_2)$ is inconsistent.  (Recall this notion was defined on page \pageref{def_SCC_localversions}).
\end{corollary}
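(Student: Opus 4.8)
The plan is to derive from $\text{SCC}(\omega_2)$ an object that directly contradicts the function $F$ produced by Theorem \ref{thm_FM_Chang}. Recall that $\text{SCC}(\omega_2)$ lets us take any $M \prec (H_\theta,\in,\Delta)$ with $M \in \wp^*_{\omega_2}(H_\theta)$---so $|M| \le \omega_1$ and $M \cap \omega_2 \in \omega_2$---and extend it to some $M' \in \text{End}_{\omega_2}(M)$ (same elementarity, same trace on $\omega_2$) that picks up a new ordinal, necessarily in the interval $[\omega_2,\omega_3)$. The decisive feature is that such an extension \emph{never} enlarges the trace on $\omega_2$. So if I start with a model whose intersection with $\omega_2$ is a fixed $\omega$-cofinal ordinal $\delta$, and iterate $\text{SCC}(\omega_2)$ enough times, I can grow the model to size $\omega_2$ while keeping $M \cap \omega_2 = \delta$ throughout. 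The resulting size-$\omega_2$ hull will then be closed under $F$ with an $\omega$-cofinal transitive trace on $\omega_2$, contradicting Theorem \ref{thm_FM_Chang}.

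Concretely, fix the function $F$ from Theorem \ref{thm_FM_Chang}, a sufficiently large regular $\theta$ with $F \in H_\theta$, and a wellorder $\Delta$ of $H_\theta$ witnessing $\text{SCC}(\omega_2)$. First I would build a base model $M_0 \prec (H_\theta,\in,\Delta)$ with $\{F\} \cup \omega_1 \subseteq M_0$, $|M_0| = \omega_1$, and $M_0 \cap \omega_2$ equal to some $\delta \in (\omega_1,\omega_2)$ with $\text{cf}(\delta) = \omega$. This is routine: take an increasing elementary chain $\langle P_n \ : \ n < \omega \rangle$ of models of size $\omega_1$, each containing $\omega_1 \cup \{F\}$, arranged so that $\sup(P_n \cap \omega_2)$ strictly increases, and set $M_0 := \bigcup_n P_n$. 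Because $\omega_1 \subseteq M_0$, the intersection $M_0 \cap \omega_2$ is transitive (any $\xi \in M_0 \cap \omega_2$ above $\omega_1$ is surjected onto by a function in $M_0$ with domain $\omega_1 \subseteq M_0$), hence equals $\delta := \sup(M_0 \cap \omega_2)$, which has cofinality $\omega$ by construction. In particular $M_0 \in \wp^*_{\omega_2}(H_\theta)$.

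Next I would recursively build a $\subseteq$-increasing, $\subseteq$-continuous elementary chain $\langle M_\alpha \ : \ \alpha < \omega_2 \rangle$ of members of $\wp^*_{\omega_2}(H_\theta)$, starting from $M_0$: at successor steps apply $\text{SCC}(\omega_2)$ to $M_\alpha$ to obtain some $M_{\alpha+1} \in \text{End}_{\omega_2}(M_\alpha)$ with $(M_{\alpha+1} \setminus M_\alpha) \cap \omega_3 \ne \emptyset$, and take unions at limits. By the defining clause of $\text{End}_{\omega_2}$, each successor step preserves $M_{\alpha+1} \cap \omega_2 = M_\alpha \cap \omega_2$, and unions preserve it as well; so $M_\alpha \cap \omega_2 = \delta$ for all $\alpha$, which keeps every $M_\alpha$ in $\wp^*_{\omega_2}(H_\theta)$ (size $\le \omega_1$, transitive trace $\delta$) and hence eligible for the next application. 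Since each successor step adds a genuinely new ordinal, and (as $M_{\alpha+1} \cap \omega_2 = \delta$) that ordinal lies in $[\omega_2,\omega_3)$, the union $X := \bigcup_{\alpha < \omega_2} M_\alpha$ collects $\omega_2$-many distinct ordinals below $\omega_3$; thus $|X \cap \omega_3| = \omega_2$, while $X \cap \omega_2 = \delta$ and $X \prec (H_\theta,\in,\Delta)$ contains $F$, so $X \cap \omega_3$ is closed under $F$.

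Finally I would apply Theorem \ref{thm_FM_Chang} to $X \cap \omega_3$: it is closed under $F$, has size $\omega_2$, and $X \cap \omega_2 = \delta$ is an ordinal in $(\omega_1,\omega_2)$, so the theorem forces $\delta$ to be $\omega_1$-cofinal---contradicting $\text{cf}(\delta) = \omega$. I expect the only genuinely delicate points to be bookkeeping rather than conceptual: verifying that the $\omega_2$-length iteration keeps the trace on $\omega_2$ pinned at $\delta$ at every stage (so that each $M_\alpha$ remains in $\wp^*_{\omega_2}(H_\theta)$ and $\text{SCC}(\omega_2)$ stays applicable), and confirming that the accumulated new ordinals below $\omega_3$ really push $|X \cap \omega_3|$ up to exactly $\omega_2$. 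All the genuine strength is imported from Theorem \ref{thm_FM_Chang}; the corollary amounts to constructing a witness that this anti-reflection principle forbids.
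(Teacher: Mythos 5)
Your proposal is correct and follows essentially the same route as the paper: start from a size-$\omega_1$ elementary submodel whose trace on $\omega_2$ is a fixed $\omega$-cofinal ordinal, iterate $\text{SCC}(\omega_2)$ along a continuous chain of length $\omega_2$ to accumulate $\omega_2$-many new ordinals below $\omega_3$ without moving the trace on $\omega_2$, and then contradict the function $F$ of Theorem \ref{thm_FM_Chang}. The only cosmetic difference is that the paper takes $F$ to be the $\Delta$-least such function (so it lies in every relevant hull automatically) whereas you place $F$ into $M_0$ by hand; both work.
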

\begin{proof}
Assume toward a contradiction that $\text{SCC}(\omega_2)$ holds.  Fix a large $\theta$ and a wellorder $\Delta$ of $H_\theta$, and let $F$ be the $\Delta$-least function satisfying the conclusion of Theorem \ref{thm_FM_Chang}.  Fix an $M \prec (H_\theta,\in,\Delta)$ such that $|M|=\omega_1 \subset M$ and $M \cap \omega_2$ is an $\omega$-cofinal ordinal in the interval $(\omega_1,\omega_2)$.  Using $\text{SCC}(\omega_2)$, build a $\subseteq$-increasing and continuous chain $\langle M_i \ : \ i < \omega_2 \rangle$ such that $M = M_0$, $M_i \subset M_{i+1}$, $M_{i+1} \cap \omega_2 = M_{i} \cap \omega_2$, $(M_{i+1} \setminus M_i)  \cap \omega_3 \ne \emptyset$, $|M_i| = \omega_1$, and $M_i \prec (H_\theta,\in,\Delta)$ for all $i < \omega_2$.  Let $Y$ be the union of the $M_i$'s.  Then $Y \prec (H_\theta,\in,\Delta)$, $|Y \cap \omega_3|=\omega_2$, and $Y \cap \omega_2 = M \cap \omega_2$ is an $\omega$-cofinal ordinal.  But $F \in Y$ and hence $Y$ is closed under $F$.  This contradicts Theorem \ref{thm_FM_Chang}. 
\end{proof}

Corollary \ref{cor_SCC_omega2_incons} and Theorem \ref{thm_WRP_implies_SCC} imply:

\begin{corollary}[Foreman-Magidor~\cite{MR1359154}]
The principle $\text{WRP}\big( \wp^*_{\omega_2} \big)$ is inconsistent.
\end{corollary}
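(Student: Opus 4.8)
The plan is simply to chain together the two immediately preceding results, so the proof is a short deduction rather than a new argument. First I would specialize Theorem \ref{thm_WRP_implies_SCC} to the cardinal $\mu = \omega_2$. This is legitimate because $\omega_2$ is a regular uncountable cardinal, which is exactly the hypothesis imposed on $\mu$ in that theorem. The conclusion of the theorem then reads: $\text{WRP}\big(\wp^*_{\omega_2}\big) \implies \text{SCC}(\omega_2)$.

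Next I would invoke Corollary \ref{cor_SCC_omega2_incons}, which states that $\text{SCC}(\omega_2)$ is outright inconsistent (provable from ZFC to fail). Putting the two together, any model of $\text{WRP}\big(\wp^*_{\omega_2}\big)$ would be a model of $\text{SCC}(\omega_2)$, contradicting the corollary. Hence no model satisfies $\text{WRP}\big(\wp^*_{\omega_2}\big)$; that is, the principle is inconsistent, as claimed.

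Since the deduction itself is a one-line combination of an implication with an inconsistency, there is no genuine obstacle remaining at this stage: all the substance has already been spent in the two cited results. For orientation, the nontrivial content lives in the proof of Corollary \ref{cor_SCC_omega2_incons}, which runs $\text{SCC}(\omega_2)$ against the Foreman--Magidor coloring $F:[\omega_3]^{<\omega}\to\omega_3$ of Theorem \ref{thm_FM_Chang} by building an $\omega_2$-sized $F$-closed $Y$ whose trace $Y\cap\omega_2$ is an $\omega$-cofinal ordinal in $(\omega_1,\omega_2)$, which Theorem \ref{thm_FM_Chang} forbids; and in the proof of Theorem \ref{thm_WRP_implies_SCC}, where the reflection principle converts stationarily many counterexamples to $\text{SCC}(\mu)$ into a single reflecting $W \in \wp^*_{\mu^+}(H_{\mu^{++}})$, after which Lemma \ref{lem_DontEscapeMcapW} produces an $M$ whose hull $M(W)$ properly end-extends $M$ past $\text{sup}(M\cap\mu^+)$. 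The present corollary merely reaps the consequence of these at $\mu=\omega_2$.
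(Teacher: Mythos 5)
Your proof is correct and is exactly the paper's argument: the paper derives this corollary by combining Theorem \ref{thm_WRP_implies_SCC} at $\mu=\omega_2$ with Corollary \ref{cor_SCC_omega2_incons}, just as you do.
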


%We include another corollary, regarding higher versions of \textbf{stationary logic}, a fragment of second order logic introduced by Shelah and investigated by Barwise et al~\cite{} and others.  Given a regular uncountable cardinal $\lambda$, the logic $\mathcal{L}^{aa}_\mu$ extends first order logic by including a second order quantifier \emph{aa} (short for ``almost all"), with satisfaction recursively defined as follows:  given a structure $\mathfrak{A}=(A,\dots)$ (in a countable signature), 
%\[
%\mathfrak{A} \models^{aa}_\mu  \ \text{aa} Z \ \phi[Z,\dots] 
%\]
%if and only if the set
%\[
%\{ Z \in [A]^{<\mu} \ : \ \mathfrak{A} \models^{aa}_\mu \ \phi[Z,\dots] \}
%\]
%contains a club in $\wp^*_{\mu}(A)$.
%
%Recent work of Fuchino et al~\cite{} shows:\footnote{They proved it for $\mu = \omega_1$, but the proof generalizes easily.}
%\begin{theorem}[Fuchino et al~\cite{}]
%For any regular uncountable $\mu$, the following are equivalent:
%\begin{enumerate}
% \item Every structure (in a countable signature) has an $\mathcal{L}^{aa}_\mu$-elementary substructure of size $\le \mu$.
% \item 
%\end{enumerate}
%\end{theorem}
%
%

\subsection{Positive results}

%\textcolor{red}{\textbf{Remark that the following theorem of Foreman and Magidor shows that a restricted version of DRP, and LST for statioanry logic, is consistent.  Cite Fuchino et al.}}

%The previous section showed that $\text{WRP}\big( \wp^*_{\omega_2} \big)$ is inconsistent, and tracing through the proof shows that there is a stationary subset of \textcolor{red}{\textbf{FIGURE OUT; A STATIONARY SUBSET OF $\wp^*_{\omega_2}(\omega_3)$?  Concentrating on non-IU sets?  that doesn't reflect}} 

While $\text{WRP}\big( \wp^*_{\omega_2} \big)$ is always false, a restricted version of it is consistent.  Recall the class $\text{IA}_{\omega_1}$ from Fact \ref{fact_IA}.

\begin{theorem}[Foreman-Magidor~\cite{MR1359154}]\label{thm_FM_PossibleHigherWRPIA}
If $\kappa$ is supercompact, then the following statement holds after forcing with the Levy collapse $\text{Col}(\omega_2, < \kappa)$:  For every regular $\theta \ge \omega_3$ and every stationary $S \subseteq \wp^*_{\omega_2}(H_\theta)$ such that
\[
S \subseteq \text{IA}_{\omega_1},
\]
there is a $W \in \wp^*_{\omega_3}(H_\theta)$ such that $S \cap \wp^*_{\omega_2}(W)$ is stationary in $\wp^*_{\omega_2}(W)$.
\end{theorem}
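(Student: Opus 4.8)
The plan is to lift a supercompactness embedding of $\kappa$ through the L\'evy collapse, using the hypothesis $S \subseteq \text{IA}_{\omega_1}$ at precisely the point where stationarity must survive the tail forcing. Write $\mathbb{P} = \text{Col}(\omega_2, <\kappa)$ and fix a generic $G$, so that $\kappa = \omega_3^{V[G]}$ (since $\mathbb{P}$ is $<\omega_2$-closed and $\kappa$-c.c.). Given a regular $\theta \ge \omega_3$ and a stationary $S \subseteq \wp^*_{\omega_2}(H_\theta)$ with $S \subseteq \text{IA}_{\omega_1}$ in $V[G]$, set $\eta := |H_\theta^{V[G]}|$, and fix in $V$ an embedding $j : V \to N$ witnessing $\eta$-supercompactness of $\kappa$: $\text{crit}(j) = \kappa$, $j(\kappa) > \eta$, and ${}^{\eta} N \subseteq N$. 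Because $\text{crit}(j) = \kappa$, conditions of $\mathbb{P}$ are fixed pointwise, so $j[G] = G$ and $j(\mathbb{P}) = \mathbb{P} * \dot{Q}$, where $\dot{Q}$ names the $<\omega_2$-closed poset $(\text{Col}(\omega_2, [\kappa, j(\kappa))))^N$.

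First I would produce the lift. Let $g$ be $Q$-generic over $V[G]$, hence over $N[G]$; then $G * g$ is $j(\mathbb{P})$-generic over $N$, and in $V[G][g]$ the recipe $\hat{j}(\tau_G) := j(\tau)_{G*g}$ defines an elementary $\hat{j} : V[G] \to \bar{N} := N[G][g]$ extending $j$, with $\text{crit}(\hat{j}) = \kappa = \omega_3^{V[G]}$ and $\hat{j}(\omega_2) = \omega_2$. Put $W := \hat{j}[H_\theta^{V[G]}]$. Two facts I would record about $W$: first, $W \in \bar{N}$, since fixing a surjection $e : \eta \to H_\theta^{V[G]}$ in $V[G]$ gives $W = \hat{j}(e)[\, j[\eta] \,]$, where $\hat{j}(e) \in \bar{N}$ and $j[\eta] \in N$ by the $\eta$-closure of $N$; second, $W \cap \hat{j}(\omega_3) = \hat{j}[\kappa] = \kappa \in \hat{j}(\omega_3)$ and $|W| = \eta < \hat{j}(\omega_3)$, so $\bar{N}$ sees $W \in \wp^*_{\hat{j}(\omega_3)}(\hat{j}(H_\theta))$.

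Next I would verify that $W$ reflects $\hat{j}(S)$. The crucial observation is that every $M \in S$ has $|M| = \omega_1 < \kappa = \text{crit}(\hat{j})$, hence $\hat{j}(M) = \hat{j}[M] \subseteq W$, with $\hat{j}(M) \cap \omega_2 = M \cap \omega_2 \in \omega_2$ and $\hat{j}(M) \in \hat{j}(S)$ by elementarity; thus $\hat{j}(M) \in \hat{j}(S) \cap \wp^*_{\omega_2}(W)$. Because $\hat{j} \restriction H_\theta^{V[G]}$ is a bijection onto $W$, any algebra $F$ on $W$ conjugates to an algebra $F'$ on $H_\theta^{V[G]}$, and a member $M \in S$ closed under $F'$ yields the member $\hat{j}[M]$ of $\hat{j}(S) \cap \wp^*_{\omega_2}(W)$ closed under $F$. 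This shows $\hat{j}(S) \cap \wp^*_{\omega_2}(W)$ is stationary in $\wp^*_{\omega_2}(W)$ in $V[G][g]$; since $W$, $\hat{j}(S)$, and all of these witnesses lie in $\bar{N}$, and passing to the inner model $\bar{N}$ only discards algebras, the same stationarity holds in $\bar{N}$. Hence $\bar{N} \models$ ``$\hat{j}(S)$ reflects to some member of $\wp^*_{\hat{j}(\omega_3)}(\hat{j}(H_\theta))$'', and by elementarity of $\hat{j} : V[G] \to \bar{N}$ we recover the required $W_0 \in \wp^*_{\omega_3}(H_\theta)$ reflecting $S$ in $V[G]$.

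The step where the hypothesis $S \subseteq \text{IA}_{\omega_1}$ is indispensable, and which I expect to be the main obstacle, is the conjugation argument above: it requires that $S$ remain stationary in $V[G][g]$, i.e.\ that the $<\omega_2$-closed tail $Q$ preserve the stationarity of $S$. This is false for a general stationary $S \subseteq \wp^*_{\omega_2}(H_\theta)$, but it holds when $S$ concentrates on $\text{IA}_{\omega_1}$: given a name $\dot{F}$ for a new algebra and a condition $q$, one chooses $M \in S$ with the relevant parameters in $M$ and builds a descending $\omega_1$-chain below $q$ that is $(M, Q)$-generic, using the internal-approachability filtration of $M$, whose proper initial segments belong to $M$ by Fact \ref{fact_IA}, to catch the dense sets of $M$ one stage at a time, and the $<\omega_2$-closure of $Q$ to form the lower bounds; the resulting master condition forces $M$ to be closed under $\dot{F}$. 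This IA-preservation lemma is the technical heart of the proof, while the supercompact lifting built around it is routine.
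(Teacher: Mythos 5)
The paper states this theorem without proof, citing Foreman--Magidor, so there is no in-paper argument to compare against; your proposal is, in outline, the standard lifting argument from \cite{MR1359154} and I believe it is correct. The supercompactness part is routine and your bookkeeping is right: $j\restriction\mathbb{P}=\mathrm{id}$ gives the factorization and the lift $\hat{\jmath}$, the $\eta$-closure of $N$ gives $W=\hat{\jmath}[H_\theta^{V[G]}]\in\bar{N}$ with $W\cap j(\kappa)=\kappa$, the points $\hat{\jmath}(M)=\hat{\jmath}[M]$ (valid since $|M|\le\omega_1<\mathrm{crit}(\hat{\jmath})$) witness reflection, and stationarity passes down to the inner model $\bar{N}$ because only algebras are discarded while the witnesses $\hat{\jmath}(M)\in\hat{\jmath}(S)$ remain. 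You also correctly isolate the real content: the tail collapse must preserve the stationarity of $S$, which is exactly where $S\subseteq\mathrm{IA}_{\omega_1}$ is used. The one step your sketch elides is that ``choose $M\in S$ with the relevant parameters in $M$'' cannot be taken literally: elements of $S$ are subsets of $H_\theta$ and cannot contain $Q$ or the name $\dot{F}$, which live far above $\theta$. The standard repair is to lift $S$ to a large $H_\lambda$, i.e., to observe that $\{M^*\prec(H_\lambda,\in,\Delta,Q,q,\dot{F},\dots): M^*\in\mathrm{IA}_{\omega_1},\ M^*\cap H_\theta\in S\}$ is stationary --- using the hull map $M\mapsto\mathrm{Hull}^{\mathfrak{B}}(M)$ together with Lemma \ref{lem_DontEscapeMcapW} to get $M^*\cap H_\theta=M$, and noting that hulls of the IA filtration of $M$ witness $M^*\in\mathrm{IA}_{\omega_1}$ --- and then to run the $\omega_1$-length master-condition construction inside such an $M^*$, so that the decided values of $\dot{F}$ on tuples from $M$ are forced into $M^*\cap H_\theta=M$ by elementarity. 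With that supplied, the preservation lemma and hence the whole argument goes through.
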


\section{Open Problem Summary}

Here we collect the open problems that were mentioned at various places in the survey:
\begin{enumerate}
 \item (From Section \ref{sec_LocalSCC}): Are any of the following implications reversible?
\[
\xymatrix{
\text{SCC}^{\text{cof}} \ar@{=>}[r] \ar@/_2pc/@{=>}[rr] & \text{SCC}^{\text{split}} \ar@{=>}[r] & \text{SCC}
}
\]

\vspace{2pc} 

\noindent The author conjectures that none of the implications are reversible.  The question of whether the implication $\text{SCC}^{\text{cof}} \implies \text{SCC}$ is reversible was also asked by Usuba~\cite{MR3248209}.
 \item (From Section \ref{sec_RelationSCC_WRP}):  Does WRP imply $\text{RP}_{\text{internal}}$?  This is similar to questions of Krueger~\cite{MR2674000} (whether WRP implies the principle $\text{RP}_{\text{IS}}$) and Beaudoin~\cite{MR877870} (whether WRP implies Fleissner's Axiom R).  See Cox~\cite{Cox_RP_IS} for more details.
 \item (From Section \ref{sec_SCC_TP}):  Does $\neg \text{CH}$ plus ``every stationary subset of $[\omega_2]^\omega$ reflects to a set of size $\omega_1$" imply that the Tree Property holds at $\omega_2$?  This question also appears in \cite{MR3431031}.
\end{enumerate}

\begin{bibdiv}
\begin{biblist}
\bibselect{../../../MasterBibliography/Bibliography}
\end{biblist}
\end{bibdiv}

\end{document}